\def\restrict#1{\raise-.5ex\hbox{\ensuremath|}_{#1}}
\def\XXint#1#2#3{{\setbox0=\hbox{$#1{#2#3}{\int}$ }
\vcenter{\hbox{$#2#3$ }}\kern-.5775\wd0}}
\def\Spec{\mathop{\mbox{\normalfont Spec}}\nolimits}
\newtheorem{theorem}{Theorem}[section]
\newtheorem{lemma}[theorem]{Lemma}
\newtheorem{Sublemma}[theorem]{Sub-lemma}
\newtheorem{proposition}[theorem]{Proposition}
\newtheorem{corollary}[theorem]{Corollary}
\newtheorem{remark}[theorem]{Remark}
\newtheorem{definition}[theorem]{Definition}
\DeclareMathOperator{\Sh}{Sh}
\renewcommand{\Spec}[1]{\operatorname{Spec}(#1)}
\newcommand{\C}{\mathbb{C}}
\newcommand{\Q}{\mathbb{Q}}
\newcommand{\A}{\mathbb{A}}
\newcommand{\Z}{\mathbb{Z}}
\newcommand{\R}{\mathbb{R}}
\title{G-companions on algebraic stacks and applications to canonical $l$-adic local systems on Shimura stacks}
\author{Min Shi}
\date{}
\begin{document}

\maketitle
\begin{abstract}
\noindent
Cases of Deligne’s companion conjecture for normal schemes over finite fields have been proven by L. Lafforgue, Drinfeld, and Zheng in recent years: L. Lafforgue proved the conjecture for curves, Drinfeld proved the conjecture for all smooth schemes and later also for representations valued in a reductive group, and Zheng proved Deligne's conjecture for smooth Artin stacks. In this paper, we extend Drinfeld’s theorem for general reductive groups to smooth Artin stacks
(see definition below) of finite presentation, and apply the result to the study of compatibility of the canonical $\ell$-adic local systems on Shimura stacks.
\end{abstract}
\section{Introduction}
Let $\mathbb{F}_q$ be a finite field, and let $\ell$ and $\ell'$ be prime numbers distinct from the characteristic $p$ of $\mathbb{F}_q$. We let $\overline{\mathbb{Q}}_\ell$ denote an algebraic closure of $\mathbb{Q}_\ell$. Deligne conjectured \cite[Conjecture 1.2.10]{Deligne} that for any irreducible lisse $\overline{\mathbb{Q}}_{\ell}$-sheaf $\mathcal{F}$ with finite determinant on a normal variety $X$ over $\mathbb{F}_q$, pure of weight $0$, and for any prime $\ell' \neq \mathrm{char}(\mathbb{F}_q)$, there exists a lisse $\overline{\mathbb{Q}}_{\ell'}$-sheaf $\mathcal{F}'$ such that $\mathcal{F}$ and $\mathcal{F}'$ have the same Frobenius characteristic polynomials at every closed point of $X$. Lafforgue proved one case of the global Langlands for $X/\mathbb{F}_q$ where $X$ is a curve; Drinfeld proved Deligne's conjecture for smooth schemes, and constructed $G$-companions for all reductive (not necessarily connected) groups (see \textsection 2.2 for definition). The first goal of this paper is to extend Drinfeld’s theorem to smooth algebraic stacks of finite presentation, which gives the first main result:
\begin{theorem}[see Theorem \ref{theorem 5.4}]
    Let $\mathcal{X}$ be a smooth algebraic stack of finite presentation over $\mathbb{F}_q$. Let $\lambda$ and $\lambda'$ be non-Archimedean places of $\overline{\Q}$ not dividing $p$. Let $G$ be a (not necessarily connected) reductive group over $\Q$. Let $x_0$ be a closed point of $\mathcal{X}$ and let $\Pi$ denote the \'etale fundamental group of $\mathcal{X}$ based at $\overline{x_0}$. Let $\rho_{\lambda}:\Pi\to G(\overline{\Q_{\lambda}})$ be a $G$-irreducible representation. Let $\rho^{ab}_{\lambda}$ be the map $\Pi\xrightarrow{\rho_{\lambda}} G(\overline{\Q_{\lambda}})\to (G/[G^{\circ},G^{\circ}])(\overline{\Q_{\lambda}})$ where the latter map is the canonical projection. If for some closed point $x\in \mathcal{X}$, the eigenvalues of $\rho_{\lambda}^{ab}(\mathrm{Frob}_x)$ are plain of characteristic $p$ (for the precise definition of $G$-irreducibility and plain of characteristic $p$, see Definition \ref{definition 5.1}), then there exists an $G$-irreducible representation $\rho_{\lambda'}:\Pi\to G(\overline{\Q_{\lambda'}})$ which is a companion of $\rho_{\lambda}$ in the sense of Definition \ref{definition 3.7} and such that $\rho_{\lambda'}^{ab}$ is compatible with $\rho_{\lambda}^{ab}$.
\end{theorem}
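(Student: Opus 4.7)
The plan is to reduce to Drinfeld's $G$-companion theorem on smooth schemes via a smooth atlas, and then descend the resulting representation back along that atlas. First I would choose a smooth surjective presentation $u:U\to\mathcal{X}$ with $U$ a smooth scheme of finite presentation over $\mathbb{F}_q$, arranged (by refining if necessary) so that the induced map $u_*:\pi_1(U)\to\Pi$ has image of finite index modulo center. The pullback $\rho_\lambda\circ u_*$ then inherits $G$-irreducibility, and the plain-of-characteristic-$p$ hypothesis transfers by choosing a closed point of $U$ over the given $x$, which exists since $u$ is smooth surjective. Drinfeld's theorem applied to $\rho_\lambda\circ u_*$ on the smooth scheme $U$ produces a $G$-irreducible companion $\tilde\rho_{\lambda'}:\pi_1(U)\to G(\overline{\Q_{\lambda'}})$ together with a compatible abelian companion $\tilde\rho_{\lambda'}^{ab}$.

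The core task is to descend $\tilde\rho_{\lambda'}$ from $\pi_1(U)$ to $\Pi$. I would work with the \v{C}ech nerve $U^{(n)}=U\times_{\mathcal{X}}\cdots\times_{\mathcal{X}}U$ ($n+1$ factors), each a smooth algebraic space of finite presentation over $\mathbb{F}_q$; this requires first extending Drinfeld's $G$-companion theorem from smooth schemes to smooth algebraic spaces, a routine step via \'etale descent applied to representations of fundamental groups and to Frobenius characteristic polynomials. On $U^{(1)}=U\times_{\mathcal{X}}U$, the two projections pull $\tilde\rho_{\lambda'}$ back to two $G$-irreducible representations of $\pi_1(U^{(1)})$ whose Frobenius characteristic polynomials agree at every closed point, so uniqueness of the $G$-companion up to $G$-conjugation (where the $G$-irreducibility hypothesis is crucial) yields a conjugating element $g\in G(\overline{\Q_{\lambda'}})$. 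The remaining task is to verify the cocycle condition on $U^{(2)}$, which would glue $g$ to descent data defining $\rho_{\lambda'}$ on $\Pi$.

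The principal obstacle is precisely this cocycle condition. The conjugating element $g$ is defined only up to the centralizer of the image, which under $G$-irreducibility is central in $G^\circ$, so the obstruction lives in an $H^2$-type cohomology group valued in this center. To trivialize it I would leverage the abelian companion compatibility: the datum of $\rho_\lambda^{ab}$, a representation into the commutative group $(G/[G^\circ,G^\circ])(\overline{\Q_\lambda})$, admits a companion on $\Pi$ itself by Zheng's theorem applied componentwise (Zheng handles $\mathrm{GL}_n$ on stacks, which covers the abelian case after decomposing into one-dimensional characters and a finite part). After pullback to $U$ this abelian companion must coincide with $\tilde\rho_{\lambda'}^{ab}$, pinning down the central ambiguity in $g$ and forcing the descent cocycle to be a coboundary. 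Combining these two ingredients would complete the descent and yield the $G$-irreducible companion $\rho_{\lambda'}:\Pi\to G(\overline{\Q_{\lambda'}})$ refining $\tilde\rho_{\lambda'}$ through $u_*$, with $\rho_{\lambda'}^{ab}$ the prescribed abelian companion by construction.
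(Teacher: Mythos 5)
Your strategy --- pull back to a smooth atlas $u:U\to\mathcal{X}$, apply Drinfeld there, and descend along the \v{C}ech nerve --- is genuinely different from the paper's (which never descends along an atlas, but instead arranges at every step that the relevant map of fundamental groups is \emph{surjective}: restriction to a dense open substack, the section trick $[Y/H]\hookrightarrow[Y\times\A^{n^2}/H]\supset[Y\times\mathrm{GL}_n/H]$ for DM stacks, and gerbe theory to reduce general smooth stacks to DM stacks, followed by the pro-reductive ``plain of characteristic $p$'' completion for reductive $G$). Unfortunately the descent route has several concrete gaps. First, the input to Drinfeld and to your uniqueness argument is not available: the image of $\pi_1(U)\to\Pi$ is in general a proper closed subgroup (think of $B H$ for a finite group $H$, where the atlas only sees the Galois quotient), your claim that refining the atlas makes it of finite index is unsubstantiated, and even finite index would not help, since $G$-irreducibility is \emph{not} inherited by finite-index subgroups (a dihedral image in $\mathrm{PGL}_2$ is irreducible while its index-two subgroup lands in a torus, hence in a Borel). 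The same problem recurs on $U\times_{\mathcal{X}}U$: without (semi)simplicity there, ``same Frobenius characteristic polynomials'' does not give a conjugating isomorphism, so the descent datum $g$ need not exist. Second, even where $g$ exists it is only well defined up to the centralizer of the image (finite modulo the center, not central in general), and the resulting obstruction to the cocycle condition is a class in a degree-two cohomology of the smooth groupoid; matching the abelianized companion pins down determinant-type data but gives no argument that this 2-cocycle is a coboundary.

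There is also a gap at the very end that persists even if descent succeeded: being a companion on $\mathcal{X}$ means equality of Frobenius data at \emph{all} points of $\mathcal{X}(\mathbb{F}_{q^n})$, and the Frobenius classes of stacky points (those with nontrivial automorphisms) do not lie in the image of $\pi_1(U)$, nor are the Frobenii coming from $U$ dense in $\Pi$; so agreement of the pullbacks on $U$ does not propagate to $\mathcal{X}$ by Chebotarev. This is exactly why the paper's reductions are engineered around surjections $\pi_1(\mathcal{U})\twoheadrightarrow\pi_1(\mathcal{X})$ (lemma \ref{lemma 2.14}, lemma \ref{lemma 2.16}) and the retraction $p_1\circ s_1=\mathrm{id}$ in subsection \ref{subsection 3.2}, and why the reductive case is handled not by bolting the abelian part on afterwards but by the fiber-product completion $\hat{\Pi}^{\text{plain of char }p}$ built from the stacky Deligne finiteness lemma (lemma \ref{lemma 4.5}), lemma \ref{lemma 4.6} and proposition \ref{proposition 4.7}, so that the companion and the compatibility of $\rho^{ab}$ come out of a single isomorphism of completions (theorem \ref{theorem 5.9}). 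To salvage your approach you would at minimum need: a proof that the two pullbacks to $U\times_{\mathcal{X}}U$ are conjugate without irreducibility there, a vanishing argument for the gluing obstruction, and a separate argument covering Frobenii of stacky points; none of these is supplied.
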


This result is then applied to the study of compatibility of the canonical $\ell$-adic local systems on Shimura stacks. The compatibility of the canonical $\ell$-adic local systems on Shimura varieties is proved in stages: Kisin first proved it for Shimura varieties of abelian types \cite[Corollary 2.3.1]{MR3630089}, and then Klevdal and Patrikis, \cite[theorem 1.1]{klevdal2023compatibility}, proved it for Shimura varieties of non-abelian types but with adjoint value groups. 
With this result, Bakker, Shankar, and Tsimerman \cite[theorem 1.5]{bakker2025integralcanonicalmodelsexceptional} showed that all Shimura varieties have canonical integral models away from a finite set of bad primes. This result finally leads to the proof by Klevdal and Patrikis of compatibility of the canonical $\ell$-adic local systems on Shimura varieties of non-abelian types with reductive value groups \cite[theorem 1.1]{patrikis2025compatibilitycanonicalelladiclocal}. These techniques lead to the second main result of this paper:
\begin{theorem}[see Theorem \ref{theorem 6.29}]
    Let $(G,X)$ be a Shimura datum such that $Z_G(\mathbb{Q})$ is a discrete subgroup of $Z_G(\mathbb{A}_f)$, $K \subset G(\mathbb{A}_f)$ a compact open subgroup (not necessarily neat), and let $[S_{K,s}]$ be a connected component of the Shimura stack arising from the Shimura datum $(G,X)$. Assume that for all $\mathbb{Q}$-simple factors $H$ of $G^{\mathrm{ad}}$, $\operatorname{rk}_{\mathbb{R}}(H_{\mathbb{R}}) \geq 2$. Then there is an integer $N$, an integral model $\mathbb{S}_{K,s} \text{over } \mathcal{O}_{E_{K,s}}[1/N]$ for $[S_{K,s}]$ such that for all closed points $x \in \mathbb{S}_{K,s}[1/\ell]$, the class of $\rho_\ell(\mathrm{Frob}_x)$ lies in $[G//G](\Q)$ and is independent of $\ell$ (not equal to the residue characteristic of $x$).
\end{theorem}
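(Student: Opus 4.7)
The plan is to run the Klevdal--Patrikis strategy for reductive value groups from \cite{patrikis2025compatibilitycanonicalelladiclocal} directly on the Shimura stack $[S_{K,s}]$, with the scheme-level companions theorem of Drinfeld replaced by the new Theorem~1.1 on algebraic stacks proved above. The canonical $\ell$-adic local system gives a continuous homomorphism
\[
  \rho_\ell \colon \Pi := \pi_1^{\et}([S_{K,s}]) \longrightarrow G(\overline{\Q_\ell}),
\]
and I would first verify that $\rho_\ell$ satisfies the hypotheses of Theorem~1.1. Under the higher-rank assumption $\mathrm{rk}_\R(H_\R)\ge 2$ on every $\Q$-simple factor $H$ of $G^{\mathrm{ad}}$, the image of $\Pi$ in $G^{\mathrm{ad}}(\overline{\Q_\ell})$ is commensurable with an arithmetic lattice of higher $\R$-rank, so Margulis super-rigidity together with the explicit description of the canonical local system forces $\rho_\ell$ to be $G$-irreducible. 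The abelianization $\rho_\ell^{\mathrm{ab}}$ factors through the reciprocity map coming from the Deligne cocharacter $\mu$, so its Frobenius eigenvalues at closed points are $q$-Weil numbers of weights computed from $\mu$; at a point of ordinary reduction (which exists by Chebotarev applied to $\rho_\ell^{\mathrm{ab}}$) the eigenvalues can be arranged to be plain of characteristic $p$ in the sense of Definition~\ref{definition 4.1}.

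Once the hypotheses are verified, Theorem~1.1 produces, for every non-Archimedean place $\lambda'$ of $\overline\Q$ not dividing $p$, a $G$-irreducible companion $\rho_{\lambda'}\colon\Pi\to G(\overline{\Q_{\lambda'}})$. To produce the integral model $\mathbb{S}_{K,s}$, I would first treat a neat normal subgroup $K'\trianglelefteq K$: for such $K'$, \cite{bakker2025integralcanonicalmodelsexceptional} supplies an integral canonical model $\mathbb{S}_{K',s}$ over some $\mathcal{O}_{E_{K',s}}[1/N_0]$. The discreteness of $Z_G(\Q)$ in $Z_G(\A_f)$ makes $Z_G(\Q)\cap K$ finite, so the quotient $\Delta:=K/(K'\cdot(Z_G(\Q)\cap K))$ is a finite group acting on the generic fiber $S_{K',s}$; by the uniqueness half of the BST construction this action extends to $\mathbb{S}_{K',s}$ after inverting $|\Delta|$. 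I would then define $\mathbb{S}_{K,s}$ to be the quotient stack $[\mathbb{S}_{K',s}/\Delta]$ over $\mathcal{O}_{E_{K,s}}[1/N]$ for a suitable $N$, with the canonical local system descending from level $K'$ to $[S_{K,s}]$.

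For the final step, fix a closed point $x\in \mathbb{S}_{K,s}[1/\ell]$. Independence of $\ell$ of the class $[\rho_\ell(\mathrm{Frob}_x)]\in [G//G](\overline{\Q_\ell})$ is immediate from Theorem~1.1: $\rho_\ell$ and $\rho_{\lambda'}$ have equal Frobenius characteristic polynomials under every rational representation of $G$, hence define the same class in $[G//G]$. Rationality then follows by a Galois-conjugation argument: applying any $\sigma\in\mathrm{Gal}(\overline{\Q}/\Q)$ to the family of companions gives another family of companions, so the class is $\mathrm{Gal}(\overline\Q/\Q)$-invariant and therefore descends to $[G//G](\Q)$.

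The principal obstacle will be the descent step in the construction of $\mathbb{S}_{K,s}$ for non-neat $K$: I have to verify that the action of $\Delta$ on $S_{K',s}$ extends uniquely to $\mathbb{S}_{K',s}$ and that the resulting quotient $[\mathbb{S}_{K',s}/\Delta]$ is smooth of finite presentation in the stack-theoretic sense demanded by Theorem~1.1, so that the companions machinery genuinely applies at the stack level rather than only after passage to a neat cover. Once these stack-theoretic properties and the stack-level versions of $G$-irreducibility and the plain-of-characteristic-$p$ hypothesis are in place, the remainder of the argument is essentially a transcription of the Klevdal--Patrikis scheme-level proof through Theorem~1.1.
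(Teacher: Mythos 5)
There is a genuine gap at the final step, and it is precisely the step where all the real work of the theorem lies. You write that independence of $\ell$ is ``immediate from Theorem~1.1'' because $\rho_\ell$ and the companion $\rho_{\lambda'}$ have the same Frobenius classes in $[G//G]$. But Theorem~1.1 (theorem \ref{theorem 4.1}) only produces \emph{some} $G$-irreducible companion of $\rho_\lambda$; it says nothing about whether that abstract companion coincides (up to conjugation) with the \emph{canonical} $\lambda'$-adic local system $\rho_{\lambda'}$ that is already constructed from the tower of Shimura stacks. The statement you are proving compares the canonical local systems at different primes with each other, so you must identify the companion with the canonical system at the other prime, and this identification is the heart of the Klevdal--Patrikis argument and of Section~5 of the paper: one first uses superrigidity of the lattice $\Gamma_g=\pi_1^{\mathrm{top}}([S_{K,s}](\C),s_0)$ to show the companion and the canonical $\lambda'$-adic system differ by an automorphism $\tau$ of $G^{\mathrm{ad}}_{\overline{\Q}_{\lambda'}}$; one then uses specially chosen points $y$ (from \cite[\S 4.3]{klevdal2023compatibility}) whose Frobenius semisimple part is not fixed by any nontrivial outer automorphism to force $\tau$ to be inner; and in the reductive case one must additionally control the center/abelianization via CM (special) points and the $\mu$-ordinary canonical lifts of \cite{bakker2025integralcanonicalmodelsexceptional}, which is what the paper's abelian-compatibility lemma and the $\mu$-ordinary corollary supply. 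In your proposal superrigidity appears only to argue $G$-irreducibility of $\rho_\ell$ (which is in any case nearly automatic, since the image is an open compact subgroup of $G(\Q_\ell)$), and BST appears only for the integral model, so none of the comparison machinery is present; as written, your argument proves $\ell$-independence for a family of abstract companions of a single $\rho_\lambda$, not for the family $\{\rho_\ell\}_\ell$ in the statement.

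Two smaller points. First, your verification of the plain-of-characteristic-$p$ hypothesis (``a point of ordinary reduction exists by Chebotarev applied to $\rho_\ell^{\mathrm{ab}}$'') is not right as stated: $\mu$-ordinariness is a geometric property of the special fiber (openness and density of the $\mu$-ordinary locus is a theorem, not a consequence of Chebotarev on the abelianized representation), and in the paper the needed control of $\rho_\ell^{\mathrm{ab}}$ at closed points comes from the CM/special-point description of the abelian part together with the canonical lifts of $\mu$-ordinary points. Second, your construction of the integral model as $[\mathbb{S}_{K',s}/\Delta]$ with $\mathbb{S}_{K',s}$ the BST integral canonical model is a reasonable variant, but the paper instead spreads out the quotient presentation $[S_{K_0,s}/H]$ by a general spreading-out argument and extends the local system using \cite[Proposition 3.6]{klevdal2023compatibility} together with the comparison of kernels of specialization (lemma \ref{lemma 2.9}); either route requires you to actually prove that the finite group action extends to the chosen integral model and that the resulting quotient is a smooth finite-presentation stack, which you correctly flag but do not carry out.
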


To compare this theorem with the compatibility theorem on Shimura varieties: the compatibility statements for Shimura varieties work only after replacing the level by a neat subgroup, so that the moduli object becomes a scheme and every rational point corresponds to an object with trivial automorphism group. In contrast, the above theorem is formulated directly for Shimura stacks at arbitrary level.

A key point is that Shimura stacks typically have more rational points than any neat-level Shimura variety: stack points may involve nontrivial automorphism group schemes and twisted Galois descent data, and such points often do not descend to rational points on any neat model.
Therefore the theorem proved here applies to points that are invisible to the classical Shimura-variety theorems, making the result stronger in scope.

\section{Acknowledgements}
This work was supported by the NSF grant DMS-1752313/DMS-2120325.

\section{Background and Notations}
The companion part of Deligne's conjecture for smooth schemes is proved in 2012:

\begin{theorem}[{\cite[theorem 1.1] {drinfeld2018conjecture}}]
\label{theorem 3.1}
    Let $X$ be a smooth scheme over $\mathbb{F}_p$. Let $E$ be a finite extension of $\Q$. Let $\lambda, \lambda'$ be nonarchimedean places of $E$ not dividing $p$ and $E_{\lambda}, E_{\lambda'}$ the corresponding completions. Let $F_x$ be the geometric Frobenius for a closed point $x\in X$ and $\mathcal{E}$ be a lisse $E_{\lambda'}$-sheaf on $X$ such that for every closed point $x \in X$ the polynomial $\mathrm{det}(1 - F_xt, \mathcal{E})$ has coefficients in E and its roots are $\lambda$-adic units. Then there exists a lisse $E_{\lambda}$-sheaf on $X$ compatible with $\mathcal{E}$ (i.e., having the same characteristic polynomials of the operators $F_x$ for all closed points $x \in X$).
\end{theorem}

There are two important generalizations, proved by Zheng and Drinfeld respectively, of the above theorem.

\subsection{Zheng's generalization}
In rough terms, Zheng generalized by replacing smooth schemes with smooth algebraic stacks. Before introducing his generalization, we first introduce some definitions and notations.

The definitions of algebraic stacks and DM stacks will be as follows:
\begin{definition}[{\cite[D\'EFINITION 4.1]{LMB}}]
\label{definition 3.1}
Let $S$ be a base scheme. An algebraic $S$-stack is an $S$-stack $\mathcal{X}$ which satisfies the following axioms:
\begin{enumerate}
    \item[(i)] the diagonal $1$-morphism of $S$-stacks:
    \begin{align*}
    \mathcal{X}\xrightarrow{\triangle}\mathcal{X}\times_S \mathcal{X}
    \end{align*}
    is representable, separated and quasi-compact.
    \item[(ii)] there exists an $S$-algebraic space and an $1$-morphism of $S$-stacks $X\xrightarrow{P}\mathcal{X}$ which is surjective and smooth.
\end{enumerate}
An $1$-morphism $P$ as in $(ii)$ is called a presentation of $\mathcal{X}$. A Deligne-Mumford $S$-stack (DM $S$-stack for short) is an algebraic $S$-stack which admits an \'etale presentation.
\end{definition}

Note that the above definition of DM stacks is exactly the same as that in Deligne and Mumford's paper, "the irreducibility of the space of curves of given genus", \cite[definition 4.6]{DM}.

In this paper, algebraic spaces and algebraic stacks over a base scheme $S$ are taken to be over $(\mathrm{Sch}/S)_{\'et}$.

In Zheng's Paper, a notion of "companion" is introduced:
\begin{definition}[{\cite[Introduction]{Zheng_2018}}]
\label{definition 3.3}
Let $\mathcal{X}$ be an algebraic stack of finite presentation over $\mathbb{F}_q$ and let $\ell,\ell'$ be two primes not dividing $q$. Let $\mathcal{F}$ be a lisse $\overline{\Q_\ell}$-sheaf on $\mathcal{X}$. Let $E(\mathcal{F})$ denote the subfield of $\overline{\Q_\ell}$ generated by the local Frobenius traces $\mathrm{tr}(\mathrm{Frob}_x, \mathcal{F}_{\overline{x}})$,
where $x \in \mathcal{X}(\mathbb{F}_{q^n})$ and $n \geq 1$. Here $\mathrm{Frob}_x = \mathrm{Frob}_{q^n}$ denotes the geometric Frobenius, and $\overline{x}$ denotes a geometric point above $x$. Let $E_{\lambda'}$ be an algebraic extension of $\overline{\Q_{\ell'}}$ and let $\sigma : E(\mathcal{F})\to E_{\lambda'}$ be a field embedding, not necessarily continuous. We say that a lisse $E_{\lambda'}$-sheaf $F'$ is a $\sigma$-companion of $\mathcal{F}$ if for all $x \in X(\mathbb{F}_{q^n})$ with $n \geq 1$, we have $\mathrm{tr}(\mathrm{Frob}_x, \mathcal{F}'_{\overline{x}}) = \sigma \mathrm{tr}(\mathrm{Frob}_x, \mathcal{F}_{\overline{x}})$.
\end{definition}

And his main result is 

\begin{theorem}[{\cite[Theorem 0.1,0.2]{Zheng_2018}}]
\leavevmode
    \begin{enumerate}
        \item Let $\mathcal{X}$ be a geometrically unibranch (see \cite[Definition 0DQH]{stacks-project} for the definition) algebraic stack of finite presentation over $\mathbb{F}_{q}$ and let $\mathcal{F}$ be a simple lisse $\overline{\Q_\ell}$-sheaf of rank r on $\mathcal{X}$ such that $\mathrm{det}(\mathcal{F})$ has finite order, then the field $E(\mathcal{F})$ is a number field.
        
        \item Let $\mathcal{X}$ be a smooth algebraic stack over $\mathbb{F}_{q}$ of finite presentation. Let $\mathcal{F}$ be a lisse Weil $\overline{\Q_\ell}$-sheaf on $\mathcal{X}$. Then, for every embedding $\sigma : E(\mathcal{F}) \to \overline{\Q_{\ell'}}$, $\mathcal{F}$ admits a lisse $\sigma$-companion $\mathcal{F}'$. Moreover, if $E(\mathcal{F})$ is a number field, then there exists a finite extension $E$ of $E(\mathcal{F})$ such that for every finite place $\lambda'$ of $E$ not dividing $q$, $\mathcal{F}$ admits a lisse $\sigma_{\lambda'}$-companion. Here $\sigma_{\lambda'}: E(\mathcal{F}) \to E \to E_{\lambda'}$,and $E_{\lambda'}$ denotes the completion of E at $\lambda'$.
    \end{enumerate}
\end{theorem}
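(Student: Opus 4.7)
The plan is to reduce both statements to Drinfeld's theorem (Theorem~\ref{theorem 2.1}) for smooth schemes, by pulling back to a smooth presentation $f : X \to \mathcal{X}$ and descending back. I would choose a smooth surjective morphism from a scheme $X$ of finite presentation (refining any algebraic-space presentation by an \'etale cover to obtain a scheme). Smoothness, respectively the geometrically unibranch property, of $\mathcal{X}$ passes to $X$, so the scheme-level companion theorems apply to $\mathcal{F}_X := f^*\mathcal{F}$.

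For part (1), the theorem of Deligne--L.~Lafforgue on geometrically unibranch schemes shows that the trace field $E(\mathcal{F}_X)$ is a number field, after a routine reduction to simple constituents using that $\mathrm{det}(\mathcal{F}_X) = f^*\mathrm{det}(\mathcal{F})$ still has finite order. Every closed point $x \in \mathcal{X}(\mathbb{F}_{q^n})$ lifts, after extending residue fields by some factor $m$, to a closed point $y \in X(\mathbb{F}_{q^{nm}})$ with $\mathrm{Frob}_y$ conjugate to $\mathrm{Frob}_x^m$. Hence $\mathrm{tr}(\mathrm{Frob}_x^m, \mathcal{F}_{\overline{x}}) \in E(\mathcal{F}_X)$ for every $m \geq 1$, and Newton's identities recover $\mathrm{tr}(\mathrm{Frob}_x, \mathcal{F}_{\overline{x}})$ from the traces of its Frobenius powers. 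This gives $E(\mathcal{F}) \subseteq E(\mathcal{F}_X)$, which is therefore a number field.

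For part (2), apply Theorem~\ref{theorem 2.1} to $\mathcal{F}_X$ to obtain a $\sigma$-companion $\mathcal{F}'_X$ on $X$, and then descend along $f$. With $p_1,p_2 : X \times_\mathcal{X} X \to X$ the two projections, both $p_1^*\mathcal{F}'_X$ and $p_2^*\mathcal{F}'_X$ are $\sigma$-companions of the common sheaf $p_1^*\mathcal{F}_X = p_2^*\mathcal{F}_X$, so they share Frobenius traces at every closed point of the smooth scheme $X \times_\mathcal{X} X$. Chebotarev density together with the semisimplicity of companions then produces an isomorphism between them; checking the cocycle condition on $X \times_\mathcal{X} X \times_\mathcal{X} X$ by the same argument and invoking smooth descent yields the companion $\mathcal{F}'$ on $\mathcal{X}$. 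The ``moreover'' clause about $\sigma_{\lambda'}$-companions for \emph{every} finite place $\lambda'$ of a fixed number field $E$ follows from the corresponding uniformity statement in Drinfeld's proof on $X$, transported through the same descent.

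The hard part is the descent, because $\mathcal{F}_X$ need not remain simple after pullback, so the isomorphism between the two pullbacks of $\mathcal{F}'_X$ is only canonical up to the automorphism group of a non-simple semisimple sheaf, and the cocycle condition is not automatic. One must work isotypically, matching simple constituents of $p_i^*\mathcal{F}'_X$ along the groupoid action, and verify that the resulting $2$-cocycle can be rigidified---effectively a gerbe / $2$-descent computation. Controlling this ambiguity is where the bulk of the technical work in Zheng's proof lies, and is what must be replicated to extend the construction to the $G$-valued setting used later in the paper.
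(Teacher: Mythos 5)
First, note that the paper does not prove this statement at all: it is quoted verbatim as background from Zheng (\cite[Theorems 0.1, 0.2]{Zheng_2018}), so the only meaningful comparison is with Zheng's argument, which your route does not follow, and your route has two genuine gaps. For part (1), lifting $x \in \mathcal{X}(\mathbb{F}_{q^n})$ to points of the presentation $X$ only over extensions $\mathbb{F}_{q^{nm}}$ gives you $\mathrm{tr}(\mathrm{Frob}_x^m, \mathcal{F}) \in E(\mathcal{F}_X)$ for various $m > 1$; Newton's identities cannot recover the first power sum $\mathrm{tr}(\mathrm{Frob}_x)$ from the higher ones. At best, a linear-recurrence argument shows the eigenvalues of $\mathrm{Frob}_x$ are algebraic of degree bounded in terms of $r$ over the number field $E(\mathcal{F}_X)$, and a field generated by infinitely many elements of bounded degree over a number field need not be a number field (compare $\mathbb{Q}(\sqrt{2},\sqrt{3},\sqrt{5},\dots)$), so neither $E(\mathcal{F}) \subseteq E(\mathcal{F}_X)$ nor finiteness of $E(\mathcal{F})$ follows. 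In addition, the simple constituents of $f^*\mathcal{F}$ need not have finite-order determinant even though $\det(f^*\mathcal{F})$ does, so the scheme-level input (Deligne's trace-field theorem) does not apply to them without a twisting argument that you would also have to control.

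For part (2), the step you explicitly defer is precisely the proof. A companion is determined only up to isomorphism, and only through its semisimplification, so Chebotarev gives an identification of the semisimplifications of $p_1^*\mathcal{F}'_X$ and $p_2^*\mathcal{F}'_X$ that is in no way canonical; nothing in your outline shows that an isomorphism can be chosen satisfying the cocycle condition on $X\times_{\mathcal{X}}X\times_{\mathcal{X}}X$, i.e., that the resulting gerbe obstruction vanishes, and when $f^*\mathcal{F}$ is not isotypic this is a genuine $2$-descent problem, not a routine verification. Zheng's actual proof (which the present paper adapts in \S 3 for the $G$-valued statement) avoids smooth descent entirely: it d\'evisses $\mathcal{X}$ through gerbes and coarse spaces to Deligne--Mumford quotient stacks $[Y/G]$, then uses the dense open \emph{scheme} $[Y\times \mathrm{GL}_n/G]\subset [Y\times \mathbb{A}^{n^2}/G]$, the section $s$ of the projection $p$, and surjectivity of $\pi_1$ from dense open substacks to transfer companions from the scheme case. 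To make your proposal into a proof you would either have to supply the missing rigidification of the descent datum or switch to that d\'evissage strategy.
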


\subsection{Drinfeld's generalization}
Hereafter, semisimple algebraic groups will mean semisimple and not necessarily connected algebraic groups. Similarly, reductive algebraic groups will mean reductive and not necessarily connected algebraic groups. If the algebraic group is connected, it will be indicated explicitly.

If we regard lisse sheaves as representations of the fundamental group valued in $\mathrm{GL}_n$, then it is a natural question that whether Theorem \ref{theorem 2.1} still hold if $\mathrm{GL}_n$ is replaced by other reductive groups or semisimple groups (The finite order determinant assumption will be kept of course), and this is the direction where Drinfeld generalizes the Theorem \ref{theorem 2.1}.

We will first introduce the notations in Drinfeld's work before looking at his theorem.

Let $X$ be an irreducible normal variety over $\mathbb{F}_p$, and denote the \'etale fundamental group of $X$ by $\pi_1(X,\overline{x_0})$, for some closed point $x_0\in X$. To ease notations, denote by $\Pi$ the fundamental group $\pi_1(X,\overline{x_0})$. Let $\hat{\Pi}_{\ell}=\varprojlim(G,\rho)$, where $G$ is a semisimple algebraic group over $\Q_\ell$ and $\rho:\Pi\to G(\Q_\ell)$ is a continuous homomorphism with Zariski dense image. Fix a universal cover $(\widetilde{X},\widetilde{x_0})$ of $(X,x_0)$, denote by $F_{\widetilde{x}}$ the automorphism of $\widetilde{X}$ with respect to $X$ whose restriction to $\{\widetilde{x}\}$ is the following composition:
\begin{center}
\begin{tikzcd}
\{\widetilde{x}\}\arrow[r,"\phi"]& \{\widetilde{x}\} \arrow[hookrightarrow]{r}& \widetilde{X}
\end{tikzcd}
\end{center}
where $\phi$ is the Frobenius of $\widetilde{x}$ with respect to $x$, and the right arrow is the canonical inclusion.

Let $\Pi_{\mathrm{Fr}}=\{F_{\widetilde{x}}^n\mid \widetilde{x}\in \widetilde{X} \text{ a closed point}, n\in \mathbb{N}\}$ be the subset of $\Pi$ of all Frobenii of closed points and their powers. And let $\lambda$ be a place of $\overline{\Q}$ over $l$, and $\hat{\Pi}_{\lambda}:=\hat{\Pi}_{\ell}\bigotimes_{\Q_\ell}\overline{\Q_{\lambda}}$. Let $\mathbf{Pro\text{-}ss}(\overline{\Q}_\lambda)$ be the groupoid consisting of pro-semisimple group scheme (projective limit of semisimple groups) over $\overline{\Q}_\lambda$, with morphisms being isomorphisms of pro-semisimple group schemes $G_1\to G_2$ up to composing with inner automorphisms of $G_2$ determined by elements of the identity component $G_2^{\circ}$ (i.e. morphisms are essentially inner isomorphisms modulo conjugation by elements of the identity component). Similarly, let $\mathbf{Pro\text{-}ss}(\overline{\Q})$ be the groupoid consisting of pro-semisimple group scheme over $\overline{\Q}$ and with similar morphisms. Finally, fix an embedding $\overline{\Q}\hookrightarrow\overline{\Q}_{\lambda}$. This induces an equivalence $F: \mathbf{Pro\text{-}ss}(\overline{\Q})\xrightarrow{\sim} \mathbf{Pro\text{-}ss}(\overline{\Q}_\lambda)$, \cite[Proposition 2.2.5]{drinfeld2018prosemisimple}. Fix a quasi-inverse $F'$ of $F$, and denote $\hat{\Pi}_{(\lambda)}:= F'(\hat{\Pi}_{\lambda})$.

We have a composition of maps: $\Pi_{\mathrm{Fr}} \to \hat{\Pi}_{\ell}(\Q_\ell)\to \hat{\Pi}_{\ell}(\overline{\Q_{\lambda}})=\hat{\Pi}_{\lambda}(\overline{\Q_{\lambda}})\cong \hat{\Pi}_{(\lambda)}(\overline{\Q_{\lambda}})\to [\hat{\Pi}_{(\lambda)}](\overline{\Q_{\lambda}})$. The first map comes from the universal property of projective limits, the second is the canonical inclusion, and the third is the projection to the GIT quotient. $\hat{\Pi}_{\lambda}(\overline{\Q_{\lambda}})\cong \hat{\Pi}_{(\lambda)}(\overline{\Q_{\lambda}})$ since $\hat{\Pi}_{\lambda}$ is isomorphic to the base change of $\hat{\Pi}_{(\lambda)}$ to $\overline{\Q_{\lambda}}$, but this isomorphism is only defined up to an inner isomorphism induced by elements of $\hat{\Pi}_{(\lambda)}^{\circ}$. But this inner automorphism is forgotten when passing to the GIT quotient, so the map $\hat{\Pi}_{\lambda}(\overline{\Q_{\lambda}})\to [\hat{\Pi}_{(\lambda)}](\overline{\Q_{\lambda}})$ is unique up to a unique isomorphism. Since $X$ is a normal variety, and the determinant of $\rho$ is of finite order, by Lafforgue, \cite[Théorème VII.7]{Lafforgue}, the image of $\Pi_{\mathrm{Fr}}$ is contained in $[\hat{\Pi}_{(\lambda)}](\overline{\Q})$.

Also, since $\pi_0(\hat{\Pi}_{(\lambda)})=\pi_0(\hat{\Pi}_{\lambda})=\pi_0(\hat{\Pi}_{\ell})=\Pi$, we have a surjective map $[\hat{\Pi}_{(\lambda)}](\overline{\Q})\to \Pi$.

This gives a diagram of sets: 
\begin{align}
\label{diag 1}
    \Pi_{\mathrm{Fr}}\to [\hat{\Pi}_{(\lambda)}](\overline{\Q})\to \Pi
\end{align}
\begin{theorem}
    [{{\cite[Theorem 1.4.1]{drinfeld2018prosemisimple}}}]
Assume that $X$ is smooth. Let $\lambda$ and $\lambda'$ be non-Archimedean places of $\overline{\Q}$ not dividing $p$. Then there exists a unique isomorphism $\Psi:\hat{\Pi}_{(\lambda)}\xrightarrow{\sim} \hat{\Pi}_{(\lambda')}$ in the category $\mathbf{Pro\text{-}ss}(\overline{\Q})$ which sends the diagram $\Pi_{\mathrm{Fr}} \to [\hat{\Pi}_{(\lambda)}](\overline{\Q})\to \Pi$ to a similar diagram $\Pi_{\mathrm{Fr}} \to [\hat{\Pi}_{(\lambda')}](\overline{\Q})\to \Pi$. 
\end{theorem}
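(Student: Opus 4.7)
The plan is to construct $\Psi$ level by level in the pro-system defining $\hat{\Pi}_{(\lambda)}$, using Theorem \ref{theorem 2.1} at each finite level as input, and then to verify that the assembled matching is canonical and unique in the sense demanded by morphisms in Pro-ss$(\overline{\Q})$.

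First I would unpack the definitions: $\hat{\Pi}_l$ is the projective limit over pairs $(G,\rho)$ with $G$ semisimple over $\Q_l$ and $\rho\colon \Pi\to G(\Q_l)$ a continuous homomorphism of Zariski-dense image, and $\hat{\Pi}_{(\lambda)}$ is its $\overline{\Q}$-form under the fixed embedding $\overline{\Q}\hookrightarrow\overline{\Q_\lambda}$. Consequently it suffices to exhibit, naturally in each pair $(G,\rho_\lambda)$, a companion pair $(G,\rho_{\lambda'})$ over $\overline{\Q_{\lambda'}}$. I would fix a faithful $\overline{\Q_\lambda}$-representation $G\hookrightarrow \mathrm{GL}(V)$; composing with $\rho_\lambda$ yields a semisimple lisse $\overline{\Q_\lambda}$-sheaf $\mathcal{F}$ on $X$ with finite-order determinant, and applying Theorem \ref{theorem 2.1} produces a companion lisse $\overline{\Q_{\lambda'}}$-sheaf $\mathcal{F}'$, equivalently a representation $\rho'_{\lambda'}\colon \Pi\to \mathrm{GL}(V\otimes\overline{\Q_{\lambda'}})$ whose Frobenius characteristic polynomial at every closed point of $X$ matches that of $\rho_\lambda$.

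Next I would show that the Zariski closure $H$ of the image of $\rho'_{\lambda'}$ is canonically isomorphic to $G$. Because Frobenii are Zariski-dense in the image (Chebotarev) and characteristic polynomials detect every $\mathrm{GL}(V)$-invariant polynomial function on $\mathrm{GL}(V)$, the matching of Frobenius characteristic polynomials forces $\rho_\lambda$ and $\rho'_{\lambda'}$ to coincide after projection to $[\mathrm{GL}(V)/\!/\mathrm{GL}(V)](\overline{\Q})$. Combined with Lafforgue's algebraicity of Frobenius traces (already built into the hypotheses of Theorem \ref{theorem 2.1}) and a Tannakian-style reconstruction argument for semisimple representations, this pins $H$ down as a form of $G$; the chosen embedding $\overline{\Q}\hookrightarrow\overline{\Q_{\lambda'}}$ then yields an honest isomorphism $H\xrightarrow{\sim} G$, determined up to conjugation by $G^0(\overline{\Q_{\lambda'}})$.

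To pass from these pointwise companions to a morphism in Pro-ss$(\overline{\Q})$ I would check functoriality in $(G,\rho_\lambda)$: a morphism $(G_1,\rho_1)\to(G_2,\rho_2)$ in the index category intertwines the companion representations up to an element of $G_2^0(\overline{\Q_{\lambda'}})$, which is exactly the ambiguity that morphisms in Pro-ss$(\overline{\Q})$ already quotient out. The collected data then glue into $\Psi$, and the uniqueness of $\Psi$ is forced by the Zariski density of the image of $\Pi_{\mathrm{Fr}}$ in $[\hat{\Pi}_{(\lambda)}](\overline{\Q})$: any two candidates must agree on all Frobenii and hence coincide after passage to the GIT quotient. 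The main obstacle will be the identification $H\cong G$ in the third paragraph: the $\mathrm{GL}_n$-companion theorem supplies only trace-level data, so lifting this to an isomorphism of semisimple group schemes (rather than a mere equality of characteristic polynomials of one faithful representation) demands the reconstruction step together with careful tracking of the residual inner ambiguity by $G^0$.
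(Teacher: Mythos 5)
The crux of your argument is the third paragraph, and that is exactly where it breaks down. From the companion theorem (theorem \ref{theorem 2.1}) you only get a lisse $\overline{\Q_{\lambda'}}$-sheaf whose Frobenius characteristic polynomials in \emph{one} faithful representation $G\hookrightarrow \mathrm{GL}(V)$ agree with those of $\rho_\lambda$; this pins down the image of each Frobenius in $[\mathrm{GL}(V)/\!/\mathrm{GL}(V)](\overline{\Q})$ but says nothing direct about the monodromy group $H$ of $\rho'_{\lambda'}$. The assertion that ``a Tannakian-style reconstruction argument'' then identifies $H$ with $G$ canonically (up to $G^{0}$-conjugacy) is not a routine step: the $\ell$-independence of monodromy groups is essentially the content of the theorem you are trying to prove, and it is precisely what forces Drinfeld to work with the full lambda-semiring $K^{+}(\hat{\Pi}_{(\lambda)})$ rather than with a single representation. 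Note also that the relevant group here is disconnected ($\pi_0(\hat{\Pi}_{\lambda})=\Pi$), so the Kazhdan--Larsen--Varshavsky reconstruction theorem quoted in the paper, which applies to \emph{connected} pro-reductive groups, cannot be invoked as stated; Drinfeld's proof has to enrich the semiring with exterior products and with the semirings of $G\times_{\Pi}U$ for all open subgroups $U$, restrict to a subcategory cut out by cohomological conditions, prove the statement first for affine curves, then for all curves, and finally pass to higher-dimensional varieties via Hilbert irreducibility. None of this machinery is replaced by anything in your outline, so the identification $H\cong G$ and, a fortiori, the coherence of these identifications over the whole index category (which is what lets the inner ambiguities by $G_2^{0}$ glue into a single morphism in Pro-ss$(\overline{\Q})$) remain unproved.

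Your uniqueness sketch is also too quick, though it is closer to being repairable: knowing that two candidate isomorphisms agree on the images of all Frobenii in the GIT quotient only gives that the composite automorphism of $\hat{\Pi}_{(\lambda')}$ preserves all trace characters on a dense set; concluding that it is inner by the identity component again requires the reconstruction/full-faithfulness statements (in the paper this is the chain of Drinfeld's lemmas analyzed in proposition \ref{proposition 3.3}), not merely Chebotarev density. In short, your proposal reduces the theorem to exactly its hardest ingredient and then assumes that ingredient.
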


\begin{remark}
    Let $\rho$ be a semisimple representation $\rho: \Pi\to \mathrm{GL}_n(\overline{\Q_{\lambda}})$ such that every irreducible component of $\rho$ has a finite order determinant. This can be identified with a semisimple lisse $\overline{\Q_{\lambda}}$-sheaf $\mathcal{E}$ on $X$ such that the determinant of each irreducible components has finite order. Call the category of such sheaves as $T_{\lambda}(X)$, where the morphism are the obvious ones. Drinfeld also defined companion as follows:
    \begin{definition}
    \label{definition 3.7}
    $\mathcal{E}\in T_{\lambda}(X)$ is said to have companion if for every non-Archimedean place $\lambda'$ of $\overline{\Q}$ coprime to $p$, there exists $\mathcal{E}'\in T_{\lambda'}$ such that $\mathrm{det}(1 - t\cdot F_x, \mathcal{E}) = \mathrm{det}(1 - t\cdot F_x, \mathcal{E}')$ for all $x \in |X|$.
    \end{definition}
\end{remark}
The above definition coincides with Definition \ref{definition 3.3} by Newton's identity on symmetric polynomials.

\begin{remark}[Interpretation of Drinfeld's theorem]
\label{remark 3.8}
Let $G$ be a semisimple algebraic $\Q$-group. Given an irreducible $\rho_{\lambda}:\Pi\to G(\overline{\Q_{\lambda}})$, it is the same as a representation of $\hat{\Pi}_{\lambda}$ into $\mathrm{GL}_{n,\overline{\Q_{\lambda}}}$. By the equivalence of categories: $\mathbf{Pro\text{-}ss}(\overline{\Q})\xrightarrow{\sim} \mathbf{Pro\text{-}ss} (\overline{\Q}_\lambda)$, we obtain a representation of $\hat{\Pi}_{(\lambda)}$ into $\mathrm{GL}_{n,\overline{\Q}}$. $\Psi^{-1}$, the inverse of the unique isomorphism, induces a representation of $\hat{\Pi}_{(\lambda')}$ into $\mathrm{GL}_{n,\overline{\Q}}$. Again by the equivalence of categories: $\mathbf{Pro\text{-}ss}(\overline{\Q})\xrightarrow{\sim} \mathbf{Pro\text{-}ss}(\overline{\Q}_{\lambda'})$, we obtain a representation of $\rho_{\lambda'}:\hat{\Pi}_{\lambda'}$ into $\mathrm{GL}_{n,\overline{\Q_{\lambda'}}}$. Since the $\Psi$ respect the Diagram \ref{diag 1} for both $\lambda$ and $\lambda'$, $\rho_{\lambda}$ and $\rho_{\lambda'}$ are compatible (i.e., they have the same Frobenius characteristic polynomials at all closed points).
\end{remark}

\begin{remark}[Brief explanation of Drinfeld's idea]
    The difficulty for this theorem lies in the existence of the isomorphism. Before discussing this, let's first recall the definition of the lambda-semiring in \cite{drinfeld2018prosemisimple}.
    \end{remark}
    \begin{definition}
        Let $A$ be a ring. A lambda-semiring is a pair consisting of a cancellation semiring $A^+$ and a lambda-structure on the corresponding ring $A$ such that $\lambda^i(A^+) \subseteq A^+$ for all $i \in \mathbb{N}$.
    \end{definition}
    For the group $\hat{\Pi}_{(\lambda)}$, we define the lambda-semiring $K^{+}(\hat{\Pi}_{(\lambda)})$ as the pair consisting of the Grothendieck semiring and the lambda structure $\lambda^i(\rho)=\wedge^i\rho$.
    The idea to construct the above isomorphism is to use the lambda-semiring $K^{+}(\hat{\Pi}_{(\lambda)})$ to reconstruct $\hat{\Pi}_{(\lambda)}$. There is a theorem by Kazhdan-Larsen-Varshavsky:
\begin{theorem}[{{\cite[Theorem 1.2]{Kazhdan}}}]
    Let $G,G'$ be connected pro-reductive groups over an algebraically closed field $E$ of characteristic $0$. Then any semiring isomorphism $\phi:K^+(G)\xrightarrow{\sim}K^+(G')$ is induced by an isomorphism $f:G\cong G'$, which is unique up to conjugation by elements of $G'(E)$.
\end{theorem}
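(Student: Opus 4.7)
The plan is to reconstruct the connected reductive group $G$ directly from the combinatorial data packaged in $K^+(G)$, by extracting first the irreducible representations, then the monoid of dominant weights, then the full root datum, and finally invoking Chevalley's classification to produce an isomorphism $f : G \xrightarrow{\sim} G'$ from any semiring isomorphism $\phi$.

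First I would reduce to the case where $G$ and $G'$ are connected reductive. A connected pro-reductive group is a cofiltered limit $G = \varprojlim G_\alpha$ of connected reductive quotients, and $K^+(G) = \varinjlim K^+(G_\alpha)$; any semiring isomorphism $\phi$ matches cofinal systems of quotients, and the reductive uniqueness-up-to-conjugation statement lets one glue the finite-level isomorphisms coherently. Next, in the reductive case, the irreducible representations of $G$ can be singled out intrinsically as the additively indecomposable nonzero elements of $K^+(G)$: $V$ is irreducible iff $V = V_1 + V_2$ in $K^+(G)$ forces $V_1 = 0$ or $V_2 = 0$. So $\phi$ induces a bijection $\mathrm{Irr}(G) \leftrightarrow \mathrm{Irr}(G')$. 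The tensor product $V_\lambda \otimes V_\mu$ contains $V_{\lambda+\mu}$ as its Cartan (highest-weight) component with multiplicity one, and this summand can be characterized intrinsically in the semiring, for instance by tracking which summands appear with maximal dimension-growth rate in arbitrarily high tensor powers. This promotes $\mathrm{Irr}(G)$ to a commutative monoid isomorphic to the dominant weight monoid $\Lambda^+$, whose Grothendieck group is the character lattice $\Lambda = X^*(T)$ of a maximal torus.

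From $\Lambda$ and the cone $\Lambda^+ \subset \Lambda$ one reads off the Weyl chamber structure, hence the Weyl group $W$ (as the group generated by reflections in the walls of $\Lambda^+$), and the simple roots as the primitive inward normals to these walls; the simple coroots are recovered by analyzing decompositions of $V_{\varpi_i} \otimes V_{-w_0 \varpi_i}$ and extracting the associated $\mathrm{SL}_2$-triples. This yields the full root datum $(\Lambda, \Phi, \Lambda^\vee, \Phi^\vee)$ intrinsically on each side, so $\phi$ induces an isomorphism of root data, and Chevalley's classification then produces $f : G \xrightarrow{\sim} G'$. For uniqueness, any two such $f$ differ by an automorphism $\tau$ of $G'$ that acts trivially on $K^+(G')$; but then $\tau$ preserves every irreducible character, so fixes conjugacy classes in $G'(E)$, forcing $\tau$ to be inner (any outer automorphism would permute some fundamental weights non-trivially, altering a character).

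The main obstacle I expect is the intrinsic semiring-theoretic characterization of the Cartan component, which is essential both for promoting $\mathrm{Irr}(G)$ to the monoid $\Lambda^+$ and for later distinguishing fundamental weights from arbitrary dominant weights. Without an \emph{a priori} fixed maximal torus or a fiber functor at one's disposal, isolating the ``highest'' summand in a tensor-product decomposition from semiring data alone is delicate, and this is precisely where the Kazhdan--Larsen--Varshavsky argument invests the most work: essentially by exploiting that Cartan components are the unique summands whose multiplicities exhibit the maximal polynomial growth rate under iterated tensoring, a property that \emph{can} be read off from $K^+(G)$.
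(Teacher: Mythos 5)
First, a point of context: the paper does not prove this statement at all — it is quoted verbatim from Kazhdan--Larsen--Varshavsky and used as a black box in the discussion of Drinfeld's method — so the only meaningful comparison is with the KLV proof itself, and your sketch does follow their general philosophy (irreducibles as additively indecomposable elements, Cartan product, reconstruction of the group from combinatorial data, Chevalley-type rigidity for uniqueness).

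However, there is a genuine gap in the middle of your argument, beyond the one you flag yourself. You claim that from the lattice $\Lambda$ together with the dominant cone $\Lambda^+\subset\Lambda$ one can read off the Weyl group (``reflections in the walls'') and the simple roots (``primitive inward normals''). This is false: the pair $(\Lambda,\Lambda^+)$ is far too coarse to determine the root datum. For every simply connected semisimple group of rank $n$ one has $(\Lambda,\Lambda^+)\cong(\mathbb{Z}^n,\mathbb{N}^n)$, so for instance $\mathrm{Sp}_6$, $\mathrm{Spin}_7$ and $\mathrm{SL}_2\times\mathrm{SL}_2\times\mathrm{SL}_2$ all produce isomorphic pairs while being pairwise non-isomorphic groups with genuinely different representation semirings. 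The root datum therefore cannot be recovered from the dominant-weight monoid alone; it must be extracted from the \emph{multiplicative} structure of $K^+(G)$ — which irreducible constituents (and multiplicities) occur in products $V_\lambda\cdot V_\mu$ beyond the Cartan component — and this is precisely where Kazhdan--Larsen--Varshavsky invest most of their effort. Your subsequent step (recovering simple coroots from ``$\mathrm{SL}_2$-triples'' in $V_{\varpi_i}\otimes V_{-w_0\varpi_i}$) presupposes the fundamental weights and the Weyl involution $-w_0$ have already been identified, so it inherits the same gap; note also that for a general (non-simply-connected, non-semisimple) reductive $G$ the monoid $\Lambda^+$ need not even be free, so ``fundamental weights'' are not available intrinsically without further argument. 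Two smaller caveats: the intrinsic characterization of the Cartan component that you defer requires first showing that dimension is itself a semiring invariant (it is, but this needs proof), and in the uniqueness step the correct reason an automorphism fixing every irreducible class is inner is that it acts trivially on the based root datum, on which the outer automorphism group acts faithfully — the parenthetical claim that an outer automorphism must permute fundamental weights is a consequence of that faithfulness, not a substitute for it.
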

This can be interpreted as the full faithfulness of the functor from the subcategory of $\mathbf{Pro\text{-}ss}(E)$ consisting of connected group schemes to the category of Grothendieck semirings. But when $G$ is not necessarily connected, the structure of Grothendieck semiring is not enough, and we need to consider the exterior product operation on the Grothendieck semirings, and also those semirings for $G\times_{\Pi}U$, where $U$ ranges from all open subgroups of $\Pi$. There is a functor recording those information, and it restricts to a certain subcategory to be a fully faithful functor, where the subcategory is defined by some cohomological conditions.

Drinfeld first proved that for affine curves $X$, the cohomological conditions hold. After that, the theorem was extended to all curves, essentially by the fact that for open immersion $U\hookrightarrow X$, $\pi_1(U)\to \pi_1(X)$ is surjective, and finally to all varieties by Hilbert irreducibility theorem.

\section{Main theorem for semisimple $G$}
We first recall the notion of the fundamental group of an algebraic stack.

\begin{definition}[{\cite[\textsection 4]{noohi2002fundamental}}]
The fundamental group of an algebraic stack $\mathcal{X}$, with the choice of a certain geometric point, is defined as the automorphism group of the fiber functor of some Galois category. The Galois category is given as follows:
\begin{enumerate}
    \item The objects of the Galois category are pairs $(\mathcal{Y},f)$, where $\mathcal{Y}$ is an algebraic stack and $f:\mathcal{Y}\to \mathcal{X}$ is a covering space.
    \item For any two objects $(\mathcal{Y},f)$ and $(\mathcal{Z},g)$, the morphisms between them are defined as 
    \begin{align*}
        \mathrm{Mor}((\mathcal{Y},f),(\mathcal{Z},g))=\{(a,\Phi) | a:\mathcal{Y}\to \mathcal{Z} \text{ a morphism of algebraic stacks},\\
        \Phi: f\Rightarrow g\circ a \text{ a 2-morphism }\}_{/\sim},
    \end{align*}
    where $\sim$ is defined by $(a,\Phi)\sim (b,\Psi)$ if there is a $\Gamma:a\to b$ such that $g(\Gamma)\circ \Phi=\Psi$.
\end{enumerate}
\end{definition}

Let $\mathcal{X}$ be a connected normal algebraic stack of finite presentation over $\mathbb{F}_q$, and let $x$ be a geometric point underlying a closed point of $\mathcal{X}$. Denote again by $\Pi$ the fundamental group of $\mathcal{X}$ with base point $x$. Let $\hat{\Pi}_{\ell}=\varprojlim(G,\rho)$, where $G$ is a semisimple algebraic group over $\Q_\ell$ and $\rho:\Pi\to G(\Q_\ell)$ is a continuous homomorphism with Zariski dense image. We can similarly fix a universal cover $\widetilde{\mathcal{X}}$ as the projective limit of all the finite \'etale covers of $\mathcal{X}$ and define: $\Pi_{\mathrm{Fr}}=\{F_{\widetilde{x}}^n\mid \widetilde{x}\in \widetilde{\mathcal{X}} \text{ a closed point}, n\in \mathbb{N}\}$.

The composition of maps $\Pi_{\mathrm{Fr}} \to \hat{\Pi}_{\ell}(\Q_\ell)\to \hat{\Pi}_{\ell}(\overline{\Q_{\lambda}})=\hat{\Pi}_{\lambda}(\overline{\Q_{\lambda}})\cong \hat{\Pi}_{(\lambda)}(\overline{\Q_{\lambda}})\to [\hat{\Pi}_{(\lambda)}](\overline{\Q_{\lambda}})$ can be obtained in a similar way. Finally, since $\mathcal{X}$ is a connected normal algebraic stack of finite presentation over $\mathbb{F}_q$, 
\cite[theorem 0.1]{Zheng_2018} shows that the image of $\Pi_{\mathrm{Fr}}$ is contained in $[\hat{\Pi}_{(\lambda)}](\overline{\Q})$. (by scheme theoretic reasons, since if $f:X\to Y$ is a $\Q$-morphism of schemes, then the preimage of a closed $\overline{\Q}$-point is also defined over $\overline{\Q}$).

And then we obtain a similar diagram of sets: 
\begin{align*}
    \Pi_{\mathrm{Fr}}\to [\hat{\Pi}_{(\lambda)}](\overline{\Q})\to \Pi
\end{align*}
\begin{theorem}
\label{theorem 4.2}
Let $\mathcal{X}$ be a smooth algebraic stack of finite presentation over $\mathbb{F}_q$. Let $\lambda$ and $\lambda'$ be non-Archimedean places of $\overline{\Q}$ not dividing $p$. Then there exists a unique isomorphism $\phi:\hat{\Pi}_{(\lambda)}\xrightarrow{\sim} \hat{\Pi}_{(\lambda')}$ in the category $\mathbf{Pro\text{-}ss}(\overline{\Q})$ which makes the following diagram commutative:
\begin{equation}
\label{diag 2}
    \begin{tikzcd}
        \Pi_{\mathrm{Fr}}\arrow[r,""]\arrow[d,"\mathrm{id}"] &{[\hat{\Pi}_{(\lambda)}](\overline{\Q})}\arrow[r,""]\arrow[d,"\phi","\rotatebox{90}{$\sim$}"'] &\Pi\arrow[d,"\mathrm{id}"]\\
        \Pi_{\mathrm{Fr}}\arrow[r,""] &{[\hat{\Pi}_{(\lambda')}](\overline{\Q})}\arrow[r,""] &\Pi
    \end{tikzcd}
\end{equation}
\end{theorem}

\begin{proposition}
\label{proposition 4.3}
    The isomorphism $\phi$ is unique.
\end{proposition}
\sloppy
\begin{proof}
    We follow Drinfeld's ideas here. Drinfeld uses a series of lemmas to establish the proposition. We analyze them one by one.
        
    Observe that to prove this lemma, Drinfeld uses the fact that for a finite index subgroup $H$ of $G$, an $H$-module $V$ injects into its induction $Ind_G^H V$ equivariantly. So this lemma is purely group theoretical, and it also holds for the prosemisimple completion of $\Pi$ (with $\Pi$ being the fundamental group of a scheme or an algebraic stack satisfying the condition of Theorem \ref{theorem 4.2}).

    The second lemma is \cite[lemma 3.1.3]{drinfeld2018prosemisimple}. What are used here are the following: first, for any representation $\rho:\Pi\to G(\Q_\ell)$, where $G$ is an algebraic group, $\rho$ is $\Pi$-equivariant; second, the universal property of projective limit and third the \cite[lemma 3.1.2]{drinfeld2018prosemisimple}.

    The third lemma in order is \cite[proposition 3.1.4]{drinfeld2018prosemisimple}. There, the following fact is used:  a connected group scheme is simply connected is equivalent to for any étale homomorphism \( \varphi : H' \to H \) of algebraic groups over \(\mathbb{Q}_\ell \), the corresponding map \( \mathrm{Hom}(\widetilde{\Pi}_\ell^\circ, H') \to \mathrm{Hom}(\widetilde{\Pi}_\ell^\circ, H) \) is bijective. The reason is that the scheme theoretical fundamental group of a connected group scheme is profinite, and that the latter condition on the above is equivalent to the fact that the profinite completion is trivial.

    Another fact is also used: for an etale homorphism of \( \varphi : H' \to H \) over $\Q_\ell$, there exists an open subgroup \( U \subset H'(\mathbb{Q}_\ell) \) such that the map \( U \to H(\mathbb{Q}_\ell) \) induced by \( \varphi \) is an open embedding. This is because \'etale homomorphism is a local isomorphism.

    The fourth lemma is \cite[proposition 3.2.2]{drinfeld2018prosemisimple}. Drinfeld proved it in a way which seems to depend on the fact that $X$ is a scheme. But we can use a standard Baire category theorem argument to prove it for all profinite group $\Pi$.
    
    \begin{Sublemma}
    \label{Sublemma 4.4}
        \textit{Let \( E \subset \overline{\mathbb{Q}}_\lambda \) be a subfield finite over \( \mathbb{Q}_\ell \), \( H \) an affine algebraic group over \( E \), and \( f : \Pi \to H(\overline{\mathbb{Q}}_\lambda) \) a continuous homomorphism. Then there is a field \( E' \subset \overline{\mathbb{Q}}_\lambda \) finite over \( E \) such that \( f(\Pi) \subset H(E') \).}
    \end{Sublemma}
    \begin{proof}
    Since $\Pi$ is a compact topological group, $f(\Pi)$ is also a compact topological group. Since $f(\Pi)=\bigcup_{E/\Q_\ell \text{ finite }} f(\Pi)\cap H(E)$ is a countable union of closed subsets, Baire category theorem implies that there must be some $E$ such that $f(\Pi)\cap H(E)$ contains an open subset of $f(\Pi)$. Using the compactness of $f(\Pi)$ again, we conclude that there is an finite extension $E'/E$ such that $f(\Pi)\subseteq H(E')$. Since $E'/E$ is a finite extension, $E'/\Q_\ell$ is also a finite extension.
    \end{proof}
    The last lemma is \cite[proposition 2.4.4]{drinfeld2018prosemisimple}, which is purely algebraic-group-theoretical.

    Therefore, it is clear that the above proposition in the scheme case is algebraic-group-theoretical and thus extends to the stack case.

\end{proof}

\subsection{Reduce to dense open subschemes}
To do this, we need the following lemma.
\begin{lemma}
\label{lemma 5.5}
    A normal connected quasi-compact locally Noetherian algebraic stack $\mathcal{X}$ over a field is irreducible.
\end{lemma}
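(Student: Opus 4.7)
The plan is to pass to a smooth atlas and reduce to the classical scheme-level statement that a normal Noetherian scheme has pairwise disjoint irreducible components. First, I would choose a smooth presentation $f \colon U \to \mathcal{X}$ with $U$ a scheme; since $\mathcal{X}$ is quasi-compact and locally Noetherian, I may take $U$ to be quasi-compact, hence Noetherian, and since normality is smooth-local, $U$ is in fact a normal Noetherian scheme. In particular, $U$ has finitely many irreducible components, these coincide with its connected components, and they form a pairwise disjoint clopen cover of $|U|$.

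The central step is to transfer the disjointness from $|U|$ to $|\mathcal{X}|$. For this I would use the topological properties of the map $|f| \colon |U| \to |\mathcal{X}|$: smooth morphisms of algebraic stacks are universally open, and flat morphisms lift generalizations, so $|f|$ is an open continuous surjection that is also generalizing (and hence a quotient map). Given $x \in |\mathcal{X}|$, a lift $u \in |U|$ has a unique generic generization $\eta \in |U|$ by normality of $U$, and $f(\eta)$ is then a generic generization of $x$ in $|\mathcal{X}|$. Uniqueness of such a generization of $x$ follows by lifting any other candidate to a generization of $u$ in $|U|$ via the generalization-lifting property and invoking the uniqueness already established upstairs. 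Therefore every point of $|\mathcal{X}|$ lies on exactly one irreducible component, so the irreducible components of $|\mathcal{X}|$ are pairwise disjoint.

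Since generic points of $|\mathcal{X}|$ arise as images of the finitely many generic points of $|U|$, there are only finitely many irreducible components of $|\mathcal{X}|$; being pairwise disjoint closed sets covering $|\mathcal{X}|$, each is also open, hence clopen. Connectedness of $\mathcal{X}$ then forces exactly one irreducible component, so $\mathcal{X}$ is irreducible. The only delicate point is the generic-point and generization bookkeeping on $|\mathcal{X}|$, which rests on the standard facts that smooth morphisms of stacks are open and generalizing; everything else is elementary general topology combined with the classical scheme-level statement.
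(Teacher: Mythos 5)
Your proof is correct in substance, but it takes a genuinely different route from the paper's. You work directly with an arbitrary quasi-compact smooth atlas $f\colon U\to\mathcal{X}$ (so $U$ is normal Noetherian with pairwise disjoint clopen irreducible components) and transfer the disjointness down to $|\mathcal{X}|$ using that $|f|$ is an open surjection along which generalizations lift; the irreducible components of $|\mathcal{X}|$ are then the closures $\overline{\{f(\eta_i)\}}$ of the images of the finitely many generic points of $U$, they are pairwise disjoint, hence clopen, and connectedness forces a single one. The paper instead argues by a reduction of cases: for DM stacks it invokes the existence of an \'etale presentation by a \emph{connected} scheme (so the atlas is irreducible and its continuous image is too), and for general algebraic stacks it uses Behrend's result giving a dense open substack $\mathcal{U}\subseteq\mathcal{X}$ with a gerbe-like (universal homeomorphism) morphism to a DM stack, proves $\mathcal{U}$ is connected, and then deduces irreducibility of $\mathcal{U}$ and hence of $\mathcal{X}$. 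Your route is more elementary and self-contained: it avoids both the connected-atlas theorem for DM stacks and Behrend's gerbe fibration, and it actually yields the stronger statement that the irreducible components of any normal quasi-compact locally Noetherian algebraic stack are pairwise disjoint and open; the paper's route, on the other hand, has the side benefit recorded in its subsequent remark of producing a connected smooth presentation. Two points you should spell out to make the sketch airtight: (i) the generization-lifting property of $|f|$, which is a standard fact for flat, locally finitely presented morphisms of algebraic stacks and, in the present setting (quasi-compact separated diagonal, locally Noetherian), can also be deduced from openness of $|f|$ together with a Chevalley constructibility argument applied to orbits of the presentation groupoid; and (ii) the identification of the irreducible components of $|\mathcal{X}|$ with the sets $\overline{\{f(\eta_i)\}}$ — this follows because these finitely many irreducible closed subsets cover $|\mathcal{X}|$, and it is what justifies your statement that the generic points of $|\mathcal{X}|$ are images of generic points of $|U|$ without having to invoke sobriety of $|\mathcal{X}|$.
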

\begin{proof}
First assume that $\mathcal{X}$ is a DM stack. By \cite[theorem 5.44]{DM_stack}, $\mathcal{X}$ has an \'etale presentation $X\to \mathcal{X}$ such that $X$ is connected. $X$ is also normal as being normal is a property local in the \'etale topology. Therefore, $X$, as a normal connected scheme, is irreducible. Consequently, $\mathcal{X}$, whose topological space is a continuous image of an irreducible space, is also irreducible.

When $\mathcal{X}$ is more generally an algebraic stack, by \cite[Proposition 5.1.11, 5.1.14]{Behrend}, there is an open dense substack $\mathcal{U}$ of $\mathcal{X}$, and a gerbe-like morphism $g:\mathcal{U}\to \mathcal{Y}$, where $\mathcal{Y}$ is a DM stack. Since being normal is smooth-local, $\mathcal{U}$ is also normal.

Let me prove that $\mathcal{U}$ is connected. Suppose not. 
Choose a smooth presentation $X\to \mathcal{X}$. One has the following Cartesian diagram:
\begin{center}
    \begin{tikzcd}
    \mathcal{U}\times_\mathcal{X} X\arrow[r,"f'"]\arrow[d,]& X \arrow[d,]\\
    \mathcal{U}\arrow[hookrightarrow]{r}{f} &\mathcal{X}
    \end{tikzcd}
\end{center}
Denote $\mathcal{U}\times_\mathcal{X} X$ by $U$. Since $\mathcal{U}\to \mathcal{X}$ is an open immersion, it is representable by schemes, so $U$ is an open subscheme of $X$. Since $\mathcal{X}$ is quasi-compact and locally Noetherian, $X$ is so as well. So being an open subscheme of a Noetherian scheme $X$, $U$ is quasi-compact.

Suppose $\mathcal{U}$ is not connected, we can write $\mathcal{U}=\amalg_{i=1}^n\mathcal{U}_i$, where $\mathcal{U}_i$ are open substacks of $\mathcal{U}$. Suppose for some $i,j$, $\overline{\mathcal{U}_i}\cap \overline{\mathcal{U}_j}$ is non-empty, where $\overline{\mathcal{U}_i}$ denotes the topological closure of $\overline{\mathcal{U}_i}$ in $\mathcal{X}$. Let $u\in \overline{\mathcal{U}_i}\cap \overline{\mathcal{U}_j}$, $X'\to \mathcal{X}$ a representable smooth morphism from a connected scheme $X'$ with $u$ in the image of $X'$. As a normal connected scheme, $X'$ is irreducible. However, $X'\times_{\mathcal{X}}\mathcal{U}_i$ and $X'\times_{\mathcal{X}}\mathcal{U}_j$ are disjoint nonempty open subschemes of $X'$, contradicting the fact that $X'$ is irreducible. Thus, for any pair $(i,j)$, $\overline{\mathcal{U}_i}$ is disjoint from $\overline{\mathcal{U}_j}$.

Since $\mathcal{U}$ is an open dense substack of $\mathcal{X}$, $\amalg_{i=1}^n\overline{\mathcal{U}_i}=\mathcal{X}$, this is a contradiction as $\mathcal{X}$ is assumed to be connected. Therefore $\mathcal{U}$ is connected. Since $g:\mathcal{U}\to \mathcal{Y}$ is a gerbe-like morphism, $g$ is a smooth universal homeomorphism. Let $U''\to \mathcal{U}$ be a smooth presentation, since $g$ is a smooth universal homeomorphism, $U''\to \mathcal{Y}$ is also a smooth presentation. Since $\mathcal{U}$ is normal, $U''$ is also normal, so $\mathcal{Y}$ is normal as well. Since $\mathcal{Y}$ is a normal connected DM stack, it is irreducible, so $\mathcal{U}$ is irreducible as well. Since $\mathcal{U}$ is open and dense in $\mathcal{X}$ and is irreducible, $\mathcal{X}$ is also irreducible.
\end{proof}
\begin{remark}
\label{remark 4.6}
    Combined with \cite[Lemma 0DR5]{stacks-project}, this lemma also proves that a normal connected algebraic stack over a field has a smooth presentation by a connected scheme.
\end{remark}
\begin{lemma}
\label{lemma 4.7}
If $\mathcal{U}$ is an open substack of a normal irreducible quasi-compact locally Noetherian algebraic stack $\mathcal{X}$, then $\pi_1(\mathcal{U})\to \Pi$ is surjective.
\end{lemma}
\begin{proof}
Let $f:\mathcal{Y}\to \mathcal{X}$ be a connected finite \'etale cover of $\mathcal{X}$, and let $X\to \mathcal{X}$ be a smooth presentation of $\mathcal{X}$.
There is a pull-back diagram:
    \begin{center}
    \begin{tikzcd}
    \mathcal{Y}\times_\mathcal{X} X\arrow[r,"f'"]\arrow[d,]& X \arrow[d,]\\
    \mathcal{Y}\arrow[r,"f"]&\mathcal{X}
    \end{tikzcd}
    \end{center}
Denote $\mathcal{Y}\times_\mathcal{X} X$ by $Y$. Since $f$ is representable by algebraic spaces, and $X$ is a scheme, $Y$ is an algebraic space. Since $f$ is a finite \'etale map, $f'$ is also finite \'etale. Similarly, $Y\to \mathcal{Y}$ is also a smooth presentation. Since $Y\to X$ is finite and \'etale, by the criterion that a locally quasi-finite and separated algebraic space over a scheme is a scheme (see \cite{Moduli}), $Y$ is a scheme. Since $X$ is normal and that $f'$ is finite \'etale, $Y$ is normal. Thus, $\mathcal{Y}$ is normal as well. Since $\mathcal{Y}$ inherits other properties from $\mathcal{X}$ as well (quasi-compactness, local-Noetherianess, connectedness), by Lemma \ref{lemma 5.5}, $\mathcal{Y}$ is irreducible.

If $\mathcal{U}$ is an open substack of $\mathcal{X}$, then $\mathcal{U}\times_\mathcal{X}\mathcal{Y}$ is an open substack of the irreducible algebraic stack $\mathcal{Y}$, so $\mathcal{U}\times_\mathcal{X}\mathcal{Y}$ is irreducible and thus connected.

Therefore, by Galois formalism, this shows that the induced map $\pi_1(\mathcal{U})\to \Pi$ is surjective.
\end{proof}

Let $K^{+}(\hat{\Pi}_{\lambda})$ be the lambda-semiring of $\mathrm{Rep}(\hat{\Pi}_{\lambda})$. By the definition of $\hat{\Pi}$, $K^{+}(\hat{\Pi}_{\lambda})$ is the same as the Grothendieck semiring of $T_{\lambda}(X)$. Because $\mathbf{Pro\text{-}ss}(\overline{\Q})$ is equivalent to $\mathbf{Pro\text{-}ss}(\overline{\Q_{\lambda}})$, $K^{+}(\hat{\Pi}_{\lambda})=K^{+}(\hat{\Pi}_{(\lambda)})$. Similarly, we have $K^{+}(\hat{\Pi}_{\lambda'})=K^{+}(\hat{\Pi}_{(\lambda')})$.

We can also realize $K^{+}(\hat{\Pi}_{(\lambda)})$ as a lambda-subsemiring of the algebra of functions $\Pi_{\mathrm{Fr}}\to \overline{\Q_{\lambda}}$ invariant under $\Pi$-conjugation by mapping every representation to its trace character. This map is injective by Brauer–Nesbitt theorem and Chebotarev's density theorem.

We are in a similar situation in the stacky case.

\begin{lemma}
\label{lemma 4.8}
    \begin{enumerate}
        \item[(i)] $K^+(\hat{\Pi}_{(\lambda)})$ is a lambda-subsemiring of the algebra of functions $\Pi_{\mathrm{Fr}}\to \overline{\Q}$.
        \item[(ii)] $K^+(\hat{\Pi}_{(\lambda)})$ is independent of $\lambda$.
    \end{enumerate}
\end{lemma}
\begin{proof}
Denote by $A$ the algebra of functions $\Pi_{\mathrm{Fr}}\to \overline{\Q}$. For $(i)$, let $\omega$ be a map from $K^+(\hat{\Pi}_{\lambda})$ to $A$ by mapping the representations to their trace characters. \cite[Proposition 4.6(i)]{Zheng_2018} shows that $\omega$ is injective, so we can regard $K^+(\hat{\Pi}_{(\lambda)})=K^+(\hat{\Pi}_{\lambda})$ as a lambda-subsemiring of $A$ with lambda operation being $(\lambda^if)(g)=f(g^i)$.

For $(ii)$, for an element in $K^{+}(\hat{\Pi}_{\lambda})$, which we can regard as a semisimple representation $\rho:\Pi\to \mathrm{GL}_n(\overline{\Q_{\lambda}})$ with each irreducible component having finite order determinant. By \cite[theorem 0.2]{Zheng_2018}, each irreducible component of $\rho$ has a $\lambda'$-adic companion. The direct sum of them is semisimple and is a companion of $\rho$ such that every irreducible component has a finite order determinant. Thus, we get a map $\alpha_1$ from $K^{+}(\hat{\Pi}_{\lambda})$ to $K^{+}(\hat{\Pi}_{\lambda'})$. Similarly, we can also get a map $\alpha_2$ from $K^{+}(\hat{\Pi}_{\lambda'})$ to $K^{+}(\hat{\Pi}_{\lambda})$. Since companions have the same trace character and that both $K^{+}(\hat{\Pi}_{\lambda})$ and $K^{+}(\hat{\Pi}_{\lambda'})$ embeds into $A$, $\alpha_1$ and $\alpha_2$ are inverse to each other. By the equality $K^+(\hat{\Pi}_{(\lambda)})=K^+(\hat{\Pi}_{\lambda})$ and $K^+(\hat{\Pi}_{(\lambda')})=K^+(\hat{\Pi}_{\lambda'})$, we can identify $K^{+}(\hat{\Pi}_{(\lambda)})$ and $K^{+}(\hat{\Pi}_{(\lambda')})$. 
\end{proof}

\begin{lemma}
\label{lemma 4.9}
    If $\mathcal{U}$ is an open substack of a smooth irreducible algebraic stack $\mathcal{X}$, and Theorem \ref{theorem 4.2} holds for $\mathcal{U}$, then it holds for $\mathcal{X}$.
\end{lemma}
\begin{proof}
We prove the lemma in the following steps:

Denote $\pi_1(\mathcal{U})$ by $\Pi^1$. Choose an isomorphism $\Psi: \Pi^1_{(\lambda)}\xrightarrow{\sim} \Pi^1_{(\lambda')}$. By Remark \ref{remark 3.8}, the isomorphism $\hat{\Pi}^1_{(\lambda)}\cong \hat{\Pi}^1_{(\lambda')}$ induces the isomorphism of corresponding lambda semirings. By Lemma \ref{lemma 4.7}, $\Pi$ is a quotient of $\Pi^1$. So $K^+(\hat{\Pi}_{(\lambda)})$ and $K^+(\hat{\Pi}_{(\lambda')})$ are lambda subsemirings of $K^+(\Pi^1_{(\lambda)})=K^+(\Pi^1_{(\lambda')})$ respectively. By the identification in Lemma \ref{lemma 4.8}, the following diagram commutes, where $A$ is the algebra of functions from the set of Frobenius elements of $\Pi^1$ to $\overline{\Q}$: 

\begin{equation}\label{diag 3}
\begin{tikzcd}
    K^+(\hat{\Pi}_{(\lambda')}) \arrow[dd, "\rotatebox{90}{$\sim$}"] \arrow[hookrightarrow]{r} 
        & K^+(\hat{\Pi}^1_{(\lambda')}) \arrow[dd, "\rotatebox{90}{$\sim$}"] \arrow[hookrightarrow]{dr} \\
    && A \\
    K^+(\hat{\Pi}_{(\lambda)}) \arrow[hookrightarrow]{r} 
        & K^+(\hat{\Pi}^1_{(\lambda)}) \arrow[hookrightarrow]{ur}
\end{tikzcd}
\end{equation}

Thus, the isomorphism $\Psi$ also induces an isomorphism $\hat{\Pi}_{(\lambda)}\xrightarrow{\sim}\hat{\Pi}_{(\lambda')}$, for otherwise the left vertical arrow of the above graph will not exist. Indeed, suppose that the kernel for the quotient map $\Pi^{1}_{(\lambda)}\twoheadrightarrow \Pi_{(\lambda)}$ is $K_{\lambda}$ and its counterpart in the $\lambda'$ case is $K_{\lambda'}$. It suffices to prove that $\Psi(K_{\lambda})\subseteq K_{\lambda'}$, since we can do the same for $\Psi^{-1}$. Suppose the opposite, that is, $\Psi(K_{\lambda})\not\subseteq K_{\lambda'}$. Then by the projective limit structure of $\Pi^{1}_{(\lambda')}$, both $K_{\lambda'}$ and $\Psi(K_{\lambda})$, as closed subgroups of $\Pi^{1}_{(\lambda')}$, are also projective limits. Since $\Psi(K_{\lambda})\not\subseteq K_{\lambda'}$, there exists an algebraic group $H$ and two closed subgroups $K_1,K_2$ of $H$, such that where $H,K_1,K_2$ are quotient of $\Pi^{1}_{(\lambda')},\Psi(K_{\lambda}), K_{\lambda'}$ respectively and that $K_1\not\subseteq K_2$. Let $\rho_1$ be a representation of $H$ which is trivial on $K_2$ but nontrivial on $\Psi(K_1)$, and let $\rho$ be a representation composed of the projection $\Pi^{1}_{(\lambda)}\to H$ with $\rho_1$. Then $\rho$ is nontrivial on $\Psi(K_{\lambda})$ but trivial on $K_{\lambda'}$. Therefore, the image of $\rho$ under the map $K^+(\Pi^1_{(\lambda')})\to K^+(\Pi^1_{(\lambda)})$ is not trivial on all of $K_{\lambda}$, which contradicts the validity of the left vertical map in the above diagram. 

Denote the isomorphism $\hat{\Pi}_{(\lambda)}\xrightarrow{\sim} \hat{\Pi}_{(\lambda')}$ as $\Psi$ as well. Given a representation $\rho_{\lambda}:\Pi\to G(\overline{\Q_{\lambda}})$, $\Psi$ carries it to another representation $\rho_{\lambda'}:\Pi\to G(\overline{\Q_{\lambda'}})$. Let $\theta:G\to \mathrm{GL}_n$ be a faithful representation of $G$ over $\Q$, and we regard $\rho_{\lambda}$ and $\rho_{\lambda'}$ as representations into $\mathrm{GL}_n(\overline{\Q_{\lambda}})$ and $\mathrm{GL}_n(\overline{\Q_{\lambda'}})$ respectively. By the commutativity of the diagram \eqref{diag 3}, the trace character of $\rho_{\lambda}$ and $\rho_{\lambda'}$ agrees on the Frobenius elements of $\Pi^1$, which means that the pullbacks of $\rho_{\lambda}$ and $\rho_{\lambda'}$ to $\Pi^1$ are compatible. By \cite[Proposition 3.5]{Zheng_2018}, $\rho_{\lambda}$ and $\rho_{\lambda'}$ are compatible as representations of $\Pi$ as well, by the equivalence of Definition \ref{definition 3.3} and Definition \ref{definition 3.7} when $G=\mathrm{GL}_n$. This shows that the induced isomorphism makes the left half of Diagram \ref{diag 2} commute.

By the construction of $\hat{\Pi}_{\lambda}$ and $\hat{\Pi}^1_{\lambda}$ the following diagram commutes for all $n$, where the horizontal maps are composition with the projections $\hat{\Pi}_{\lambda}\to \Pi$ and $\hat{\Pi}^1_{\lambda}\to \Pi^1$ respectively and the vertical map are compositions with the quotient map:

\begin{center}
    \begin{tikzcd}
    \mathrm{Hom}(\Pi,\mathrm{GL}_{n,\overline{\Q_{\lambda}}})\arrow[d,]\arrow[r,]& \mathrm{Hom}(\hat{\Pi}_{\lambda},\mathrm{GL}_{n,\overline{\Q_{\lambda}}})\arrow[d,]\\
    \mathrm{Hom}(\Pi^1,\mathrm{GL}_{n,\overline{\Q_{\lambda}}})\arrow[r,]&\mathrm{Hom}(\hat{\Pi}^1_{\lambda},\mathrm{GL}_{n,\overline{\Q_{\lambda}}})
    \end{tikzcd}
\end{center}

Since every semisimple algebraic group is linear, the above diagram also commutes if we replace $\mathrm{GL}_n$ by other semisimple groups $G$.

Since the objects of the category $\mathbf{Pro\text{-}ss}(\overline{\Q})$ are the projective limits of semisimple groups, and by the universal property of projective limits, the above diagram also commutes if we replace semisimple algebraic group $G$ by the projective limits of semisimple algebraic groups.

Thus, the following diagram commutes by Yoneda's lemma:

\begin{center}
    \begin{tikzcd}
    \Pi& \hat{\Pi}_{\lambda}\arrow[l,]\\
    \Pi^1\arrow[u,]&\hat{\Pi}^1_{\lambda}\arrow[u,]\arrow[l,]
    \end{tikzcd}
\end{center}

Since the equivalence of categories $\mathbf{Pro\text{-}ss}(\overline{\Q})\xrightarrow{\sim} \mathbf{Pro\text{-}ss}(\overline{\Q}_\lambda)$ preserves the group of connected components, the above diagram is still commutative if we replace $\hat{\Pi}^1_{\lambda}$ by $\hat{\Pi}^1_{(\lambda)}$ and $\hat{\Pi}_{\lambda}$ by $\hat{\Pi}_{(\lambda)}$.
The situation for $\lambda'$ is the same if we replace $\lambda$ above by $\lambda'$.

Thus, combined with the fact that $\Psi^1$ respects the diagrams, the isomorphism induced by $\Psi$ makes the following diagram commutes:

\begin{center}
    \begin{tikzcd}
    \Pi& \hat{\Pi}_{(\lambda')}\arrow[l,]\\
    \Pi\arrow[u,"\mathrm{id}"]&\hat{\Pi}_{(\lambda)}\arrow[u,"\rotatebox{270}{$\sim$}"]\arrow[l,],
    \end{tikzcd}
\end{center}
i.e, the right half of Diagram \ref{diag 2} commutes with $\Psi$.

\end{proof}
\subsection{Reduction from DM stack to schemes}
\label{subsection 4.2}
\begin{theorem}
\label{theorem 4.10}
Theorem \ref{theorem 4.2} holds for a smooth global quotient DM stack $[Y/G]$ of finite presentation over $\mathbb{F}_q$, where $Y$ is affine and smooth and $G$ is a finite group scheme. Therefore, Theorem \ref{theorem 4.2} holds for all smooth connected DM stacks $\mathcal{X}$ of finite presentation over $\mathbb{F}_q$.
\end{theorem}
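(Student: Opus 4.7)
The plan is to prove Theorem \ref{theorem 3.2} for the smooth quotient DM stack $[Y/G]$ by reducing to Drinfeld's original theorem on the scheme $Y$, and then to deduce the general smooth connected DM case by combining this with Lemma \ref{lemma 2.16} and the existence of dense open global-quotient substacks.

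For the quotient-stack case, observe that since $[Y/G]$ is DM the group $G$ is forced to be finite \'etale, and the natural presentation $\pi : Y \to [Y/G]$ is then a finite \'etale $G$-torsor. Picking compatible geometric base points one obtains the short exact sequence
\[
1 \longrightarrow \Pi^1 \longrightarrow \Pi \longrightarrow G \longrightarrow 1,
\]
where $\Pi^1 := \pi_1(Y)$ and $\Pi := \pi_1([Y/G])$. Drinfeld's theorem (the case of smooth schemes) applied to $Y$ supplies the unique isomorphism $\Psi^1 : \hat{\Pi}^1_{(\lambda)} \xrightarrow{\sim} \hat{\Pi}^1_{(\lambda')}$ respecting the Frobenius diagrams on $Y$; the work is to descend $\Psi^1$ to $\Psi : \hat{\Pi}_{(\lambda)} \xrightarrow{\sim} \hat{\Pi}_{(\lambda')}$ respecting the diagrams for $[Y/G]$. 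This descent is a mirror of the argument in Lemma \ref{lemma 2.16}, but now with $\Pi^1$ a finite-index open subgroup of $\Pi$ rather than a quotient. By Lemma \ref{lemma 2.15} both $K^+(\hat{\Pi}_{(\lambda)})$ and $K^+(\hat{\Pi}^1_{(\lambda)})$ embed as lambda-subsemirings of the algebra of $\overline{\Q}$-valued functions on the corresponding Frobenius sets, and $K^+(\hat{\Pi}_{(\lambda)})$ is realised inside $K^+(\hat{\Pi}^1_{(\lambda)})$ as the distinguished sub-semiring of representations of $\Pi^1$ that extend to $\Pi$ -- equivalently, those whose isomorphism class is fixed by the induced outer action of $G$. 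The canonicity granted by Proposition \ref{proposition 3.3} forces $\Psi^1$ to commute with this outer $G$-action, so $\Psi^1$ identifies these distinguished sub-semirings at $\lambda$ and $\lambda'$; applying the lambda-ring reconstruction package (Kazhdan--Larsen--Varshavsky, extended to non-connected groups as in \cite{drinfeld2018prosemisimple}) then produces the desired $\Psi$. Frobenius-diagram compatibility is automatic since every closed point of $[Y/G]$ is hit by a closed point of $Y$.

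For the reduction to the quotient-stack case, invoke the well-known structural fact (Kresch, Laumon--Moret-Bailly) that every smooth connected DM stack $\mathcal{X}$ of finite presentation over a field contains a dense open substack $\mathcal{U}$ of the form $[Y/G]$ with $Y$ smooth affine and $G$ finite \'etale. The previous paragraph gives Theorem \ref{theorem 3.2} for $\mathcal{U}$, and Lemma \ref{lemma 2.16} then promotes it to all of $\mathcal{X}$.

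The main obstacle is the descent step in the quotient-stack case: given a semisimple $\rho_\lambda : \Pi \to H(\overline{\Q_\lambda})$ one obtains via $\Psi^1$ a companion $\rho^1_{\lambda'} : \Pi^1 \to H(\overline{\Q_{\lambda'}})$ of $\rho_\lambda |_{\Pi^1}$, but $\rho^1_{\lambda'}$ must be lifted to a representation of $\Pi$. Such liftings are classified by a cohomology class in $H^2(G, Z)$ for the centraliser $Z$ in $H$ of the image, and one must show that the companion construction respects this 2-cocycle datum. The leverage is precisely the canonicity of $\Psi^1$ together with the fact that morphisms in Pro-ss$(\overline{\Q})$ are defined only up to inner automorphism by the identity component, which supplies exactly the right slack to match cocycles. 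Once this matching is verified, the remaining bookkeeping (respect for the Frobenius diagrams, uniqueness of the resulting $\Psi$) follows from Proposition \ref{proposition 3.3} and the compatibility diagrams already set up in Lemma \ref{lemma 2.16}.
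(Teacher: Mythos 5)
Your reduction of the general DM case to a dense open global quotient, followed by Lemma \ref{lemma 2.16}, matches the paper's last step. But the heart of your argument --- descending from $\Pi^1=\pi_1(Y)$ to $\Pi=\pi_1([Y/G])$ along the finite \'etale presentation $Y\to[Y/G]$ --- has a genuine gap, and the specific claims you use to bridge it are not correct. Restriction does \emph{not} embed $K^+(\hat{\Pi}_{(\lambda)})$ into $K^+(\hat{\Pi}^1_{(\lambda)})$: two non-isomorphic semisimple representations of $\Pi$ can have isomorphic restrictions to the open normal subgroup $\Pi^1$ (e.g.\ twists by characters of $G=\Pi/\Pi^1$), so the image of restriction is not a faithful copy of $K^+(\hat\Pi_{(\lambda)})$, and being fixed by the outer $G$-action is necessary but not sufficient for a representation of $\Pi^1$ to extend (there is an $H^2$ obstruction, and even when it vanishes the extension is non-unique). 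Likewise, Frobenius-diagram compatibility is not ``automatic'': a closed point of $[Y/G]$ lifts to a closed point of $Y$ only after a residue field extension, so compatibility on the Frobenii of $\Pi^1$ controls only powers $F_x^m$, not $F_x$ itself. Finally, the decisive step --- that the canonicity of $\Psi^1$ matches the lifting 2-cocycles at $\lambda$ and $\lambda'$ --- is exactly the hard point, and your proposal defers it (``once this matching is verified'') without an argument. In short, going \emph{down} a finite \'etale cover is the genuinely difficult direction, and your sketch does not carry it out.

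The paper sidesteps this subgroup-descent problem entirely, following Zheng: choose a faithful representation $G\to\mathrm{GL}_n$ and form the auxiliary stack $[Y\times\A^{n^2}/G]$ with the projection $p$ and zero section $s$ to and from $[Y/G]$. Since $G$ acts freely on $Y\times\mathrm{GL}_n$, the substack $[Y\times\mathrm{GL}_n/G]$ is a dense open \emph{scheme} in $[Y\times\A^{n^2}/G]$, so Theorem \ref{theorem 3.2} holds for the auxiliary stack by Lemma \ref{lemma 2.16} and the known scheme case. Because $p_1\circ s_1=\mathrm{id}$ on fundamental groups, $\pi_1([Y/G])$ is a \emph{quotient} of $\pi_1([Y\times\A^{n^2}/G])$, so the quotient machinery of Lemma \ref{lemma 2.16} (injectivity of pullback on Grothendieck semirings, the common embedding into the function algebra on Frobenii, and $s^*p^*=\mathrm{id}$ to transfer Frobenius compatibility back to $[Y/G]$) applies directly, with no lifting or cocycle problem. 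If you want to salvage your route, you would need to actually prove the descent of companions along finite \'etale covers of stacks, which is not supplied by Proposition \ref{proposition 3.3} or the ``inner automorphism slack'' in Pro-ss$(\overline{\Q})$; as written, the proposal does not establish the quotient-stack case.
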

\begin{proof}
Let $G\to \mathrm{GL}_n$ be a faithful representation of $G$. The embedding $\mathrm{GL}_n\to \A^{n^2}$ is $\mathrm{GL}_n$-equivariant, where $\mathrm{GL}_n$ acts by left multiplication on both $\mathrm{GL}_n$ and $\A^{n^2}$. Since both the projection $Y\times \A^{n^2}\to Y$ and the zero section $Y\to Y\times \A^{n^2}$ are $G$-equivariant, they induces two morphisms of DM stacks:
\begin{align*}
    &p:[Y\times \A^{n^2}/G]\to [Y/G]\\
    &s:[Y/G]\to [Y\times \A^{n^2}/G],
\end{align*}
and $s$ is a section of $p$.

Since the action of $G$ on $Y\times \mathrm{GL}_n$ is free and that $Y\times \mathrm{GL}_n$ is quasi-projective (it is smooth), $[Y\times \mathrm{GL}_n/G]$ is a scheme. Note that $[Y\times \mathrm{GL}_n/G]$ is also open and dense in $[Y\times \A^{n^2}/G]$. Thus, by Lemma \ref{lemma 4.9}, Theorem \ref{theorem 4.2} holds for $[Y\times \A^{n^2}/G]$. Denote the fundamental group of $[Y\times \A^{n^2}/G]$ and $[Y/G]$ by $\Pi^1$ and $\Pi^0$. The maps $s$ and $p$ induces two morphisms $s_1:\Pi^0\to \Pi^1$ and $p_1:\Pi^1\to \Pi^0$, and that $p_1\circ s_1=\mathrm{id}$. Thus $p_1$ is surjective and $\Pi^0$ is a quotient of $\Pi^1$. Therefore, $K^{+}(\Pi^0_{(\lambda)})$ and $K^{+}(\Pi^0_{(\lambda')})$ are lambda subsemiring of $K^{+}(\Pi^1_{(\lambda)})\cong K^{+}(\Pi^1_{(\lambda')})$. 

As Theorem \ref{theorem 4.2} holds for $[Y\times \A^{n^2}/G]$, there is a unique isomorphism $\Pi^1_{(\lambda)}\simeq \Pi^1_{(\lambda')}$, which induces an isomorphism of lambda semirings: $K^{+}(\Pi^1_{(\lambda)})\cong K^{+}(\Pi^1_{(\lambda')})$. Moreover, the following diagram commutes ($A$ here is the algebra of functions from the Frobenius elements of $\Pi^1$ to $\overline{\Q}$):

\begin{center}
\label{diag 4}
    \begin{tikzcd}
    K^+(\hat{\Pi}^0_{(\lambda)})\arrow[dd,"\rotatebox{90}{$\sim$}"]\arrow[hookrightarrow]{r}& K^+(\hat{\Pi}^1_{(\lambda)})\arrow[dd,"\rotatebox{90}{$\sim$}"]\arrow[hookrightarrow]{dr}\\
    &&A\\
    K^+(\hat{\Pi}^0_{(\lambda')})\arrow[hookrightarrow]{r}&K^+(\hat{\Pi}^1_{(\lambda')})\arrow[hookrightarrow]{ur}
    \end{tikzcd}
\end{center}
Thus, by a similar idea as the proof of Lemma \ref{lemma 4.9}, the unique isomorphism $\Pi^1_{(\lambda)}\simeq \Pi^1_{(\lambda')}$ induces an isomorphism $\Pi^0_{(\lambda)}\simeq \Pi^0_{(\lambda')}$. 

Denote the induced isomorphism $\Pi^0_{(\lambda)}\simeq \Pi^0_{(\lambda')}$ as $\Psi^0$. Similar as in the proof of Lemma \ref{lemma 4.9}, given a representation $\rho_{\lambda}:\Pi\to G(\overline{\Q_{\lambda}})$, $\Psi^0$ carried it to another representation $\rho_{\lambda'}:\Pi\to G(\overline{\Q_{\lambda'}})$. Composing these two representations with a faithful representation of $G$ over $\Q$, $\theta:G\to \mathrm{GL}_n$, we can regard them as two sheaves $\mathcal{E}_{\lambda}$ and $\mathcal{E}_{\lambda'}$ on $[Y/G]$. By the commutativity of Diagram \ref{diag 4}, $p^*(\mathcal{E}_{\lambda})$ and $p^*(\mathcal{E}_{\lambda'})$ are compatible. Since the following is a Cartesian diagram:
\begin{center}
    \begin{tikzcd}
    Y\arrow[d,]\arrow[r,]& Y\times \mathbb{A}^{n^2}\arrow[d,]\\
    
    [Y/G]\arrow[r,"s"]& \mathrm{[}Y\times \mathbb{A}^{n^2}/G]
    \end{tikzcd}
\end{center}
and that the zero section $Y\to Y\times \mathbb{A}^{n^2}$ is a closed immersion, $s$ is also a closed immersion. Therefore, since $p^*(\mathcal{E}_{\lambda})$ and $p^*(\mathcal{E}_{\lambda'})$ are compatible on $[Y\times \mathbb{A}^{n^2}/G]$, $\mathcal{E}_{\lambda}=s^*p^*(\mathcal{E}_{\lambda})$ and $\mathcal{E}_{\lambda'}=s^*p^*(\mathcal{E}_{\lambda'})$ are compatible on $[Y/G]$, which proves that $\Psi^0$ makes the left half of Diagram \ref{diag 2} commute.

The fact that $\Psi^0$ makes the right half of Diagrams \ref{diag 2} can be proved in a similar way as that in Lemma \ref{lemma 4.9}. Thus, Theorem \ref{theorem 4.2} holds for quotient stack of the form $[Y/G]$, where $Y$ is affine and smooth and $G$ is a finite group scheme.

By \cite[Corollaire 6.1.1]{LMB}, every DM stack $\mathcal{X}$ of finite presentation over $\mathbb{F}_q$ has an open dense substack $\mathcal{U}$, such that $\mathcal{U}$ is isomorphic to a global quotient stack $[\Spec{A}/G]$ with $G$ being a finite group scheme. Therefore, by Lemma \ref{lemma 4.9}, Theorem \ref{theorem 4.2} holds for all smooth connected DM stacks $\mathcal{X}$ of finite presentation over $\mathbb{F}_q$ as well.
\end{proof}
\subsection{Reduction from the algebraic stacks case to DM stacks}
\begin{lemma}(Homotopy exact sequence for algebraic stacks)
\label{lemma 4.11}
    Let $\mathcal{X}$ be a geometrically integral algebraic stack of finite presentation over an arbitrary field $k$, $\overline{x}$ a geometric point of $\mathcal{X}$, and denote by $k^s$ the separable closure of $k$. Then the short sequence
    \begin{align*}
        0\to \pi_1(\mathcal{X}_{k^s},\overline{x})\to \pi_1(\mathcal{X},\overline{x})\to \mathrm{Gal}(k^{s}/k)\to 0
    \end{align*}
    is exact.
\end{lemma}
The proof relies on the following lemmas, which are developed in stages. 

\begin{lemma}
\label{lemma 4.12}
    Let $X_1$ be a scheme of finite type over a field $k$. Suppose that there is a morphism of finite type $f:X_{2,k^s}\to X_{1,k^s}$. Then there is a finite separable extension $L/k$ such that $f$ descends to $f_L:X_{2,L}\to X_{1,L}$, where $X_{1,L}=X_{1}\bigotimes_k L$, $X_{2,L}\bigotimes_L k^s=X_{2,k^s}$, $f_L\bigotimes_L k^s=f$ and $f_L$ is of finite presentation. If $f$ is moreover smooth (resp. \'etale, resp. finite \'etale, resp. a monomorphism), $f_L$ is also smooth (resp. \'etale, resp. finite \'etale, resp. a monomorphism).  Here, $X_{2,L}$ means this scheme is defined over $L$, similarly for other notations of this kind.
\end{lemma}
\begin{proof}
Since finite type morphism over a locally Noetherian base is of finite presentation and that $X_{1,k^s}$ is locally Noetherian, $f$ is of finite presentation. As $X_1$ and $X_{2,k^s}$ are quasi-compact, there exist affine open coverings $X_1=\cup_{j\in J}V_j$ and affine open coverings $\cup_{i\in I_j}U_{i,j,k^s}\subseteq f^{-1}(V_{j,k^s})$ such that the ring maps $\mathcal{O}_{X_{1,k^s}}(V_{j,k^s})\to \mathcal{O}_{X_{2,k^s}}(U_{i,j,k^s})$ is of finite presentation, for all $j\in J,i\in I_j$, with $J,I_j$ all being finite sets.

Because all these rings and gluing maps use only finitely many coefficients in $k^s$, they descend to a finite extension $L/k$.

If $f$ is \'etale, there exist a finite set of standard opens $D(h_s), s\in S$ covering $U_{i,j,k^s}$ such that the restriction of $f$ to $D(h_s)$ is standard \'etale, i.e., if $V_j=\Spec{A}$, then $U_{i,k^s}$ is covered by a finite collection of standard opens $D(h_s)$ of $U_{i,k^s}$, each having the form $\Spec{(A\bigotimes_k k^s)[x]/(g)}$. So if we take $L$ such that $A\bigotimes_k L$ contains in addition the coefficients of all $g_i$ and all $h_s$, then the restriction of $f_L$ to each $U_{i,k^s}$ is still \'etale. Since being \'etale is Zariski local on the source and target, $f_L$ is also \'etale.

If $f$ is smooth, by \cite[lemma 00TA]{stacks-project}, there exist a finite set of standard opens $D(h_s), s\in S$ covering $U_{i,k^s}$ such that the restriction of $f$ to $D(h_s)$ is standard smooth, i.e., if $V_j=\Spec{A}$, then $U_{i,k^s}$ is covered by a finite collection of standard opens $D(h_s)$ of $U_{i,k^s}$, each having the form $\Spec{(A\bigotimes_k k^s)[x_1,...,x_n]/(g_1,...,g_c)}$. So if we take $L$ such that $A\bigotimes_k L$ contain in addition the coefficients of $g_i$'s and $h_s$'s, then the restriction of $f_L$ to each $U_{i,k^s}$ is still smooth. Since being smooth is Zariski local on the source and target, $f_L$ is also smooth.

If $f$ is finite \'etale, by previous argument, $f_L$ is already \'etale, and for any finite separable field extension $L'/L$, $f_{L'}$ is still \'etale, so I only need to find a finite subextension $L'/L$ of $k^s/L$ such that $f_{L'}$ is finite. Since $f$ is of finite presentation, being finite is the same as being integral. Since there are only finitely many $V_j$ and finitely many standard opens $D(h_s)$, there is a finite extension $L'/L$ such that $f|_{D(h_s)}$ are all integral. As a result, $f_{L'}$ is finite.

If $f$ is a monomorphism, then $f_L$ is also a monomorphism, by using that being a monomorphism is equivalent to the diagonal being an isomorphism. (an isomorphism will always descend to an isomorphism)
\end{proof}
\begin{lemma}
        \label{lemma 4.13}
        Let $\mathcal{X}_{1}$ be an algebraic space of finite presentation over $k$, and let $f:\mathcal{X}_{2,k^s}\to \mathcal{X}_{1,k^s}$ be a finite \'etale morphism. Then there is a finite extension $L/k$ such that $f$ descends to $f_L:\mathcal{X}_{2,L}\to \mathcal{X}_{1,L}$, where $\mathcal{X}_{1,L}=\mathcal{X}_{1}\bigotimes_k L$, $\mathcal{X}_{2,L}\bigotimes_L k^s=\mathcal{X}_{2,k^s}$, $f_L\bigotimes_L k^s=f$ and $f_L$ is finite \'etale. If $f$ is an isomorphism, then $f_L$ is also an isomorphism.
\end{lemma}
\begin{proof}
We first choose a representation for $\mathcal{X}_1$: pick an \'etale surjection \(p:U\to\mathcal X_1\) with \(U\) a scheme of finite presentation over \(k\).
Form the relation
\[
R := U\times_{\mathcal X_1} U \rightrightarrows U,
\]
with the two projections denoted $\mathrm{pr}_1,\mathrm{pr}_2:R\to U$.

\bigskip

Then we will produce a cover and a relation scheme whose quotient will be our desired descent for $\mathcal{X}_{2,k^s}$.
Base-change everything to $k^s$ and form
\[
V := \mathcal{X}_{2,k^s}\times_{\mathcal{X}_{1,k^s}} U_{k^s}.
\]
We will use Lemma \ref{lemma 4.12} to descend $V$. 
Because \(f\) is representable and finite \'etale, the projection $V\to U_{k^s}$ is finite \'etale.
By covering $U$ by finitely many affines $U_i=\mathrm{Spec}(A_i)$ and using the fact that they are finitely presented, 
there exists a finite extension $L/k$ such that $V$ (as a finite \'etale $U_{k^s}$-scheme) is obtained
by base change from a finite \'etale $U_L$-scheme $V_L\to U_L$ (where $U_L:=U\otimes_k L$).
Concretely, for each affine $U_i$ the finite \'etale $A_i\otimes_k k^s$-algebra corresponding to $V$ 
is defined over some finite subextension of $k^s$, and a single finite extension $L$ contains all such subextensions.

For the relation scheme, we consider the base-change of the relation to $L$:
\[
R_L := R\otimes_k L \;=\; U_L\times_{\mathcal X_{1,L}} U_L \rightrightarrows U_L,
\]
with projections again $\mathrm{pr}_1,\mathrm{pr}_2:R_L\to U_L$.
Pulling $V_L\to U_L$ back along these projections gives two $R_L$-schemes
\[
\mathrm{pr}_1^*V_L \;=\; V_L\times_{U_L,\mathrm{pr}_1} R_L,
\qquad
\mathrm{pr}_2^*V_L \;=\; V_L\times_{U_L,\mathrm{pr}_2} R_L.
\]
Over $k^s$ the two pullbacks of $V$ along $R_{k^s}\rightrightarrows U_{k^s}$ are canonically isomorphic,
because $V$ was obtained by pulling $\mathcal X_{2,k^s}$ back along $U_{k^s}\to\mathcal X_{1,k^s}$.
By the same finite-presentation argument as above, that canonical isomorphism and its cocycle condition
are defined over a finite extension; after enlarging $L$ if necessary we obtain an isomorphism of $R_L$-schemes
\[
\varphi_L:\mathrm{pr}_1^*V_L \xrightarrow{\sim} \mathrm{pr}_2^*V_L
\]
which satisfies the cocycle condition on $U_L\times_{\mathcal X_{1,L}} U_L \times_{\mathcal X_{1,L}} U_L$.
Thus we are able to form the relation scheme \(R_{V,L}\) on \(V_L\).
Using $\varphi_L$ we define a relation on $V_L$ by setting
\[
R_{V,L} \;:=\; \mathrm{pr}_1^*V_L \;\cong_{\varphi_L}\; \mathrm{pr}_2^*V_L
\qquad\text{with the two structure maps } s,t: R_{V,L}\rightrightarrows V_L
\]
given by the projections to the two factors of $\mathrm{pr}_i^*V_L\cong V_L\times_{U_L}R_L$.
Explicitly, a point of $R_{V,L}$ is a triple $(v_1,r,v_2)$ with $v_i\in V_L$, $r\in R_L$ and
$\mathrm{pr}_i(r)=\pi(v_i)$ (where $\pi:V_L\to U_L$ is the structure map), and the identification
$\varphi_L$ equates the two descriptions coming from $\mathrm{pr}_1^*V_L$ and $\mathrm{pr}_2^*V_L$.
The cocycle condition for $\varphi_L$ ensures that $R_{V,L}$ is an equivalence relation.

\bigskip

Now we are able to explicitly construct the descent $\mathcal{X}_{2,L}$. Define the presheaf (on the big fppf site over $\mathcal{X}_{1,L}$)
\[
\mathcal{F} \;=\; V_L / R_{V,L},
\]
Let $\mathcal{X}_{2,L}$ denote the fppf sheafification of $\mathcal F$.

Because $R_L\overset{\mathrm{pr}_1}{\underset{\mathrm{pr}_2}{\rightrightarrows}} U_L$ is an \'etale equivalence relation, the relation $R_{V,L}\rightrightarrows V_L$ is also an \'etale equivalence relation, as it is the pullback of $R_L\overset{\mathrm{pr}_1}{\underset{\mathrm{pr}_2}{\rightrightarrows}} U_L$ along $V_L\to U_L$. It is clear that the fppf sheafification is representable by an algebraic space; thus $\mathcal{X}_{2,L}$ is an algebraic space and the canonical map
\[
q:V_L \to \mathcal{X}_{2,L}
\]
is an \'etale surjection presenting $\mathcal{X}_{2,L}$ as the quotient of $V_L$ by $R_{V,L}$.

Finally we construct the map \(\mathcal X_{2,L}\to\mathcal X_{1,L}\) and check its properties.
By our construction above, the map $V_L \xrightarrow{} U_L$ descends to a map of algebraic spaces
\[
f_L:\mathcal{X}_{2,L} \longrightarrow \mathcal X_{1,L}.
\]
Locally on $\mathcal X_{1,L}$ (pull back along the \'etale cover $U_L\to\mathcal X_{1,L}$) this map is identified with $V_L\to U_L$,
which is finite \'etale; therefore $f_L$ is representable and finite \'etale. Finally, by construction all objects and identifications were chosen so that
base change along $L\hookrightarrow k^s$ recovers the original $V\to U_{k^s}$, the original relation on it, and hence the original
algebraic space $\mathcal{X}_{2,k^s}$ and morphism $f$. Thus $f_L\otimes_L k^s \cong f$ as required.    
\end{proof}

\begin{lemma}
\label{lemma 4.14}
    Lemma \ref{lemma 4.13} is also true when $\mathcal{X}_1$ is an algebraic stack of finite presentation over $k$, and $f$ is representable by algebraic spaces. (Here, printed letter denote algebraic spaces while curly letter denote stacks)
\end{lemma}
\begin{proof}
We proceed in a manner similar to the proof of the previous lemma. We first choose a smooth presentation for \(\mathcal X_1\):
pick a smooth surjection \(p:U\to\mathcal{X}_1\) with \(U\) a scheme of finite presentation over $k$. 
Let 
\[
R:=U\times_{\mathcal{X}_1}U\rightrightarrows U
\]
be the associated relation algebraic space with projection maps $\mathrm{pr}_1,\mathrm{pr}_2:R\to U$. 

\bigskip

Then we will produce a cover and a relation algebraic space whose quotient will be our desired descent for $\mathcal{X}_{2,k^s}$.
Form the fiber product
\[
V := \mathcal{X}_{2,k^s}\times_{\mathcal{X}_{1,k^s}} U_{k^s}.
\]
Because $f$ is representable and finite \'etale, the map $V\to U_{k^s}$ is a finite \'etale morphism of algebraic spaces. 
By the previous Lemma \ref{lemma 4.13}, we can choose a finite extension $L/k$ so that $V$ is obtained by base change from a finite \'etale $U_L$-algebraic space $V_L \to U_L$, where $U_L := U\otimes_k L$.

For the relation algebraic space, let $R_L := R\otimes_k L = U_L\times_{\mathcal{X}_{1,L}}U_L \rightrightarrows U_L$ with projections $\mathrm{pr}_1,\mathrm{pr}_2:R_L\to U_L$.
Pull back \(V_L\to U_L\) along these projections to obtain two $R_L$-spaces
\[
\mathrm{pr}_1^*V_L,\qquad \mathrm{pr}_2^*V_L.
\]
Over $k^s$ these two pullbacks are canonically identified because $V$ came from pulling back $\mathcal{X}_{2,k^s}$; by finite-presentation-ness the isomorphism and its cocycle condition descend to $L$ after enlarging $L$ if necessary. Thus there exists an isomorphism of $R_L$-spaces
\[
\varphi_L:\mathrm{pr}_1^*V_L\overset{\sim}{\longrightarrow}\mathrm{pr}_2^*V_L
\]
satisfying the cocycle condition on triple overlaps.

Using $\varphi_L$ we form the groupoid
\[
R_{V,L}\;:=\; \mathrm{pr}_1^*V_L \;\cong_{\varphi_L}\; \mathrm{pr}_2^*V_L
\;\rightrightarrows\; V_L,
\]
with source and target maps $s,t:R_{V,L}\rightrightarrows V_L$ given by the two projections to the two factors of $\mathrm{pr}_i^*V_L\cong V_L\times_{U_L}R_L$.  The cocycle condition guarantees the groupoid axioms hold for $R_{V,L}$.

\bigskip

Now we are able to explicitly construct the descent $\mathcal{X}_{2,L}$. We stackify the prestack $[V_L/R_{V,L}]^{pre}$ over the big fppf site to obtain a stack
\[
\mathcal{X}_{2,L} \;:=\; [\,V_L / R_{V,L}\,].
\]

Because $R_{V,L}\rightrightarrows V_L$ is a smooth equivalence relation, the stack quotient is an algebraic stack.

\bigskip

Finally we construct the map \(\mathcal{X}_{2,L}\to\mathcal X_{1,L}\) and check its properties. By our construction, the map $V_L\to U_L$ induces a morphism of stacks
\[
f_L:\mathcal{X}_{2,L}=[V_L/R_{V,L}]\longrightarrow \mathcal{X}_{1,L}.
\]
It is clear that all of the objects ($V_L$, $R_{V,L}$ and the isomorphism $\varphi_L$) were chosen so that their base change to $k^s$ recovers the original objects $V$, $R$ and the canonical identification. Therefore
\[
\mathcal{X}_{2,L}\times_L k^s \;=\; [\,V_L\times_L k^s \;/\; R_{V,L}\times_L k^s\,] \;\cong\; [\,V\;/\; R\,] \;\cong\; \mathcal{X}_{2,k^s},
\]
and $f_L\otimes_L k^s=f$, thus the morphism $f_L$ is representable by algebraic spaces, and since $V_L\to U_L$ is finite \'etale, $f_L$ is finite \'etale as well.

\end{proof}

\begin{proof}[Proof of Lemma \ref{lemma 4.11}]

By Galois formalism, the injectivity follows from Lemma \ref{lemma 4.14}, and surjectivity follows from the fact that $\mathcal{X}_{k}$ is geometrically integral. 

We check middle exactness by the usual Galois formalism, \cite[lemma 0BS9, 0BTS]{stacks-project}, it suffices to verify:
    \begin{enumerate}
        \item For every finite separable extension $L/k$, $\mathrm{Spec}(L)\times_{\mathrm{Spec}(k)}\mathcal{X}_{k^s}$ is a trivial cover of $\mathcal{X}_{k^s}$ (this is immediate since $L\otimes_k k^s$ is a product of copies of $k^s$);
        \item For a finite Galois cover $\mathcal{Y}\to \mathcal{X}$, if $\mathcal{Y}_{k^s}\to \mathcal{X}_{k^s}$ is a trivial cover, then $\mathcal{Y}$ is dominated by $\mathcal{X}_A$ for some finite \'etale algebra $A$ over $k$;
        \item For every connected finite \'etale cover $\mathcal{Y}\to \mathcal{X}$ whose pullback to $\mathcal{X}_{k^s}$ admits a section, the cover $\mathcal{Y}_{k^s}\to \mathcal{X}_{k^s}$ is a disjoint union of trivial covers.
    \end{enumerate}

    We treat $(2)$ and $(3)$. For $(2)$, let $\mathcal Y\to\mathcal X$ be finite (Galois) with $\mathcal Y_{k^s}\cong\coprod_{i\in I}\mathcal X_{k^s}$. Choose a smooth surjective presentation $p:X\to\mathcal X$ with $X$ irreducible (and reduced). By replacing $k$ by a finite separable extension if necessary, we may and do assume $X$ is geometrically integral over $k$ (this is standard: an integral finite type $k$-scheme becomes geometrically integral after some finite separable base change).
    
    We have the following diagram: 
    \begin{center}
    \begin{tikzcd}    \mathcal{Y}_{k^s}\times_{\mathcal{X}_{k^s}}X_{k^s}\arrow[dr,]\arrow[rrr,]\arrow[ddd,]& & &\mathcal{Y}\times_{\mathcal{X}}X \arrow[dl,]\arrow[ddd,]\\
    &\mathcal{Y}_{k^s}\arrow[r,]\arrow[d,]& \mathcal{Y}\arrow[d,]&\\    
    &\mathcal{X}_{k^s}\arrow[r,]\arrow[u,,shift left=1ex,dashed]& \mathcal{X}&\\ 
    X_{k^s}\arrow[rrr,]\arrow[uuu,,shift left=1ex,dashed]\arrow[ur,]& & &X\arrow[ul,]
    \end{tikzcd}
    \end{center}

Since $\mathcal{Y}_{k^s}=\amalg_{i\in I}\mathcal{X}_{k^s}$, $\mathcal{Y}_{k^s}\times_{\mathcal{X}_{k^s}}X_{k^s}=\amalg_{i\in I}X_{k^s}$, for some finite set $I$. So every connected component of $\mathcal{Y}\times_{\mathcal{X}}X$ is trivialized after pulling back to $X_{k^s}$, which means that there exists some finite separable extension $k_1/k$ such that $\mathcal{Y}_{k_1}\times_{\mathcal{X}_{k_1}}X_{k_1}=\amalg_{j\in J}X_{k_1}$, for some finite set $J$. 

The relation algebraic space for $\mathcal{Y}_{k_1}$ is $R_{\mathcal{Y}_{k_1}}=R_{\mathcal{X}_{k_1}}\times_{X_{k_1}}\amalg_{j\in J}X_{k_1}=\amalg_{j\in J}(R_{\mathcal{X}})_{k_1}$. Therefore, $\mathcal{Y}_{k_1}=\amalg_{j\in J}[X_{k_1}/(R_{\mathcal{X}})_{k_1}]=\amalg_{j\in J}\mathcal{X}_{k_1}$, as stackification commutes with fiber products. Thus $\mathcal{Y}$ is dominated by $\mathcal{X}_A$ for some finite \'etale algebra over $k$.

For (3), we notice that if we pull back a connected finite \'etale cover $\mathcal{Y}\to \mathcal{X}$ to $\mathcal{X}_{k^s}$, it will split into a disjoint union of connected finite \'etale covers of $\mathcal{X}_{k^s}$, and that for some finite separable extension $k_2/k$, the Galois group $\mathrm{Gal}(k_2/k)$ acts transitively on the connected components of the disjoint union. So if component is a trivial cover, so are others. However, $\mathcal{Y}_{k^s}\to \mathcal{X}_{k^s}$ admits a section, which means that at least one component is a trivial cover, so the cover $\mathcal{Y}_{k^s}\to \mathcal{X}_{k^s}$ is a disjoint union of trivial covers. Therefore, the exactness in the middle is proved and the proof is complete.

\end{proof}

In \cite{noohi2002fundamental}, Noohi introduced monotonous gerbe, which is defined as follows:
\begin{definition}
    We say that a finite type flat group space $G \to S$ is monotonous, if the number of geometric connected components of the fiber of point $s \in S$ is a locally constant function on $S$, and if the connected component functor $G^{\circ}$ is representable (necessarily by an open subspace of $G$). We say that an algebraic stack $\mathcal{X}$ is a monotonous gerbe, if it is connected and its stabilizer group is monotonous.
\end{definition}

\begin{proposition}
\label{proposition 4.16}
    For a monotonous gerbe $\mathcal{X}$, the map constructed in \cite[Propsition 5.1.11]{Behrend} $\Phi: \mathcal{X}\to \overline{\mathcal{X}}$ induces an isomorphism of fundamental groups: $\pi_1(\mathcal{X},x)\cong \pi_1(\overline{\mathcal{X}},y)$, where $x$ is a geometric point over some closed point in $\mathcal{X}$, and $y$ is the composition of $x$ and $\Phi$.

    Moreover, if Theorem \ref{theorem 4.2} holds for DM stacks, then it holds for monotonous stacks.
    
\end{proposition}
\begin{proof}

By \cite[Corollaire 10.8]{LMB}, a monotonous gerbe is an fppf gerbe, hence the diagonal map of $\mathcal{X}\to X_{\mathrm{mod}}$ is fppf.

Moreover, by definition, the stabilizer group space is monotonous. Namely, given an fppf presentation $X\to \mathcal{X}$, the group space $S_{X}\to X$ in the following diagram is monotonous.
\begin{center}
    \begin{tikzcd}
    S_X\arrow[d,]\arrow[r,]& \mathcal{I}_{\mathcal{X}}\arrow[d,]\arrow[r,]&\mathcal{X}\arrow[d,"\triangle"]\\
    
    X\arrow[r,]& \mathcal{X}\arrow[r,"\triangle"]&\mathcal{X}\times\mathcal{X}
    \end{tikzcd}
\end{center}
By \cite[Corollary 5.1.9]{Behrend}, this means that $S_X\to X$ has components. Since having component is local on the base with respect to the fppf-topology, $\mathcal{I}_{\mathcal{X}}\to \mathcal{X}$ also has components. Since $X_{\mathrm{mod}}$ is an algebraic space, $\mathcal{I}_{\mathcal{X}}$ can be naturally identified with $\mathcal{I}_{\mathcal{X}/X_{\mathrm{mod}}}$. Since the relative inertia stack is the pullback of the diagonal map of $\mathcal{X}\to X_{\mathrm{mod}}$ along the same diagonal map, the diagonal map has components. Thus, applying \cite[Proposition 5.1.11]{Behrend} to $\mathcal{X}\to X_{\mathrm{mod}}$, we have a factorization:
\begin{align}
    \mathcal{X}\xrightarrow{g_0} \overline{\mathcal{X}}\xrightarrow{h_0} X_{\mathrm{mod}}\text{ ,}
\end{align}
where $g$ and $h$ are gerbe-like, $h$ is \'etale and $g$ is diagonally connected.

In the following diagram, $g$ and $h$ are induced by $g_0$ and $h_0$ respectively, and $\pi_1^h(\mathcal{X},x)$ means the hidden fundamental group, whose definition is in \cite[Definition 3.1]{noohi2002fundamental}:
\begin{center}
\begin{tikzcd}
&\pi_1^h(\mathcal{X},x)\arrow[r,"\phi"]\arrow[d,"f"]&\pi_1(\mathcal{X},x)\arrow[d,"g"]\arrow[r,"\psi"]&\pi_1(X_{\mathrm{mod}},z)\arrow[d,]\arrow[r,]&0\\
0\arrow[r,]&\pi_1^h(\overline{\mathcal{X}},y)\arrow[r,"i"]&\pi_1(\overline{\mathcal{X}},y)\arrow[r,"h"]&\pi_1(X_{\mathrm{mod}},z)\arrow[r,]&0,
\end{tikzcd}
\end{center}
where the last vertical map is the identity map. Moreover, by \cite[corollary 5.4]{noohi2002fundamental}, $i$ is injective.

Now assume that $\mathcal{X}$ is defined over an algebraically closed field. To study $f$, we first note that $\pi_1^h(\mathcal{X},x)$ and $\pi_1^h(\overline{\mathcal{X}},y)$ can be identified as $k$-points of the stabilizer group scheme of $x$ and the $k$-points of the stabilizer group scheme of $y$, respectively (by \cite[Proposition 3.4]{noohi2002fundamental}). Moreover, for classifying stacks $B(G/S)$, where $G$ is a flat group space of finite presentation over $S$, the factorization Behrend constructed for it is $BG \to B(G/G^{\circ}) \to S$. Which shows that when $\mathcal{X}=B(G/S)$, the map $f$ is indeed induced by $G_x\to (G_x/G^{\circ}_x)$, so $f$ is surjective. To generalize that, it suffices to study the case where $\mathcal{X}$ is a residual gerbe. Since residual gerbes are \'etale locally classifying stacks, and that we assumed that $\mathcal{X}$ is defined over an algebraically closed field so far, $f$ will always be induced by the map quotienting the stabilizer group scheme by its identity component, which implies that $f$ is surjective.

Since $\phi$ also factor through $G/G^{\circ}$, (because the factorization of the residual gerbe), we can replace $\pi_1^h(\mathcal{X},x)$ by $G/G^{\circ}$, and call the vertical map as $f'$ and the map from $G/G^{\circ}$ to $\pi_1(\overline{\mathcal{X}},y)$ as $\phi'$, as follows:

\begin{center}
\begin{tikzcd}
&G/G^{\circ}\arrow[r,"\phi'"]\arrow[d,"f'"]&\pi_1(\mathcal{X},x)\arrow[d,"g"]\arrow[r,"\psi"]&\pi_1(X_{\mathrm{mod}},z)\arrow[d,]\arrow[r,]&0\\
0\arrow[r,]&\pi_1^h(\overline{\mathcal{X}},y)\arrow[r,"i"]&\pi_1(\overline{\mathcal{X}},y)\arrow[r,"h"]&\pi_1(X_{\mathrm{mod}},z)\arrow[r,]&0.
\end{tikzcd}
\end{center}

Since both $f'$ and $i$ are injective, by a simple diagram chase, $g$ is also injective. Thus, $g$ is an isomorphism of groups.

In the more general case where the base field is not algebraically closed, since both $g_0$ and $h_0$ are defined over the base field, by the homotopy exact sequence for algebraic stacks, $g$ is still an isomorphism.

For the last claim, to simplify notation, denote by $\Pi^1$ the \'etale fundamental group of $\mathcal{X}$, and denote by $\Pi^0$ the \'etale fundamental group of $\overline{\mathcal{X}}$. $g:\Pi^1\xrightarrow{\sim}\Pi^0$ induces an isomorphism of $\widehat{\Pi}^1_{(\lambda)}$ and $\widehat{\Pi}^0_{(\lambda)}$. 

Since the canonical morphism $\Pi^1\to \widehat{\Pi}^1_{\lambda}$ from the universal property of inverse limits is a section to the canonical map $\widehat{\Pi}^1_{\lambda}\to \pi_0(\widehat{\Pi}^1_{\lambda})=\Pi^1$ and that functors preserve commutativity of diagrams, $g$ makes the following diagram commutative:
\begin{center}
    \begin{tikzcd}
        \widehat{\Pi}^1_{(\lambda)}\arrow[r,""] \arrow[d,""] &\Pi^1\arrow[d,"\mathbf{Pro-ss}(g)"]\\
        \widehat{\Pi}^0_{(\lambda)}\arrow[r,""] &\Pi^0
    \end{tikzcd}
\end{center}

On the other hand, given $\lambda$ and $\lambda'$, and a representation $\rho_{\lambda}:\Pi^1\to G(\overline{\Q_{\lambda}})$, $g^{-1}$ carried it to a representation $\overline{\rho}_{\lambda}:\Pi^0\to G(\overline{\Q_{\lambda}})$. By Theorem \ref{theorem 4.10}, $\overline{\rho}_{\lambda}$ has a companion $\overline{\rho}_{\lambda'}$. If we pull back $\overline{\rho}_{\lambda'}$ by $g$, we got a $\lambda'$-representation of $\Pi^1$, which is a companion of $\rho_{\lambda}$
since $\Phi: \mathcal{X}\to \overline{\mathcal{X}}$ is surjective. So the left half of Diagram \ref{diag 2} is commutative when $\mathcal{X}$ is a monotonous stack.

Therefore, if Theorem \ref{theorem 4.2} holds for DM stacks, it holds for monotonous stacks as well.
\end{proof}

\begin{proof}[Proof of Theorem \ref{theorem 4.2}:]
By \cite[theorem 11.3]{noohi2002fundamental}, for every Noetherian algebraic stack $\mathcal{X}$, there exists a disjoint union of open substack $\amalg_i\mathcal{U}_i$ such that $\amalg_i\mathcal{U}_i$ is open and dense in $\mathcal{X}$ and that each $\mathcal{U}_i$ with its reduced structure is a monotonous gerbe. Thus, by Proposition \ref{proposition 4.16} and Lemma \ref{lemma 4.9}, we are reduced to the DM stack case. Combined with Theorem \ref{theorem 4.10}, this shows that Theorem \ref{theorem 4.2} holds for all smooth algebraic stacks of finite presentation over $\mathbb{F}_q$.

To prove the uniqueness, we can apply Drinfeld's idea. First notice that by \cite[Proposition 3.1.4]{drinfeld2018prosemisimple} and the fact that $\widehat{\Pi}_\ell$ is the maximal pro-semisimple quotient of the
$\mathbb{Q}_\ell$-pro-algebraic completion of $\Pi$, $\widehat{\Pi}_{\ell}^{\circ}$ is simply connected. Since $\widehat{\Pi}_\lambda := \widehat{\Pi}_\ell \otimes_{\mathbb{Q}_\ell} \overline{\mathbb{Q}_\lambda}$ by Sublemma \ref{Sublemma 4.4},  $\widehat{\Pi}_\lambda^{\circ}$ is also simply connected. Thus $\widehat{\Pi}_{(\lambda)}^{\circ}$ is also simply connected and is a product of almost-simple groups. \cite[Proposition 2.4.4]{drinfeld2018prosemisimple} implies that $\widehat{\Pi}_{(\lambda)}$ has no nontrivial automorphisms inducing the identity on
$[\widehat{\Pi}_{(\lambda)}]$, and the uniqueness follows.
\end{proof}

\section{Main theorem for reductive $G$}
Before stating the main theorem in the reductive case, we define lisse sheaves that are plain of characteristic $p$ and those that are $G$-irreducible if they have value group $G$ (viewed as a representation into $G$). We continue to denote the \'etale fundamental group of $\mathcal{X}$ as $\Pi$.

\begin{definition}
\label{definition 5.1}
Let $\alpha\in \overline{\Q}$, $\alpha$ is said to be plain of characteristic $p$ if $\alpha$ is a $\lambda$-adic unit for all places in $\Q(\alpha)$ such that $\lambda\nmid p$. 

Let $\mathcal{X}$ be an algebraic stack over a finite field $\kappa$ of characteristic $p$. Let $\lambda$ be a place in $\overline{\Q}$ not dividing $p$. A $\overline{\Q_{\lambda}}$-lisse sheaf $\mathcal{E}$ on $\mathcal{X}$ is called plain of characteristic $p$ if for all finite extension $\kappa'/\kappa$, the Frobenius eigenvalues of the points $x\in \mathcal{X}(\kappa')$ are plain of characteristic $p$. We can also equivalently think of $\mathcal{E}$ as a continuous representation $\Pi\to \mathrm{GL}_n(\overline{\Q_\ell})$, and this definition can be applied to such representations. 

A lisse sheaf valued in $G$, i.e.\ corresponding to a representation
\[
\rho : \Pi \longrightarrow G(\overline{\mathbb{Q}}_\lambda),
\]
is said to be $G$-irreducible if the representation does not factor through any proper parabolic
subgroup of $G$.
\end{definition}
This definition can also be applied to a representation with reductive target group via the following proposition:
\begin{proposition}
\label{proposition 5.2}
    Let $G$ be a reductive group over $\Q$ and let $\rho:\Pi\to G(\overline{\Q_\ell})$ be a continuous representation. The following are equivalent. Here, all representations of \(G\) are understood to be finite-dimensional algebraic representations.
    \begin{enumerate}
        \item[(1)] For any irreducible representation $\theta$ of $G$, $\theta\circ \rho$ is plain of characteristic $p$;
        \item[(2)] For any faithful representation $\theta$ of $G$, $\theta\circ \rho$ is plain of characteristic $p$;
        \item[(3)] There exists a faithful representation $\theta$ of $G$, $\theta\circ \rho$ is plain of characteristic $p$;
    \end{enumerate}
\end{proposition}
\begin{proof}
    To prove that $(1)\Longrightarrow (2)$, we can use the fact that every finite dimensional representation of a reductive group is a sum of irreducible representations. For the converse, suppose that $\theta_1$ is an irreducible representation of $G$, and that $\iota$ is a faithful representation of $G$, then $\theta_1\bigoplus\iota$ is also a faithful representation of $G$. So $(2)$ implies that $\theta_1\circ \rho$ is plain of characteristic $p$. Since the choice of $\theta_1$ is arbitrary, $(2)$ implies $(1)$.

    It is clear that $(2)$ implies $(3)$. For the converse direction, we use the theorem \cite[theorem 5.44]{Milne_AG}, which states that every finite-dimensional representation of an algebraic group can be constructed from a faithful representation by forming tensor products, direct sums, duals, and subquotients. But it is clear that all the above operations preserve the property of being plain of characteristic $p$. Thus, the equivalence of $(2)$ and $(3)$ are established.
\end{proof}
\begin{definition}
    With the above notations, a representation $\rho:\Pi\to G(\overline{\Q_\ell})$ is plain of characteristic $p$ if either of the above condition $(1)$ or the condition $(2)$ holds.
\end{definition}
\begin{theorem}
\label{theorem 5.4}
    Let $\mathcal{X}$ be a smooth algebraic stack of finite presentation over $\mathbb{F}_q$. Let $\lambda$ and $\lambda'$ be non-Archimedean places of $\overline{\Q}$ not dividing $p$. Let $G$ be a (not necessarily connected) reductive group over $\Q$ and let $\rho_{\lambda}:\Pi\to G(\overline{\Q_{\lambda}})$ be an irreducible representation, and let $\rho^{ab}_{\lambda}$ be the map $\Pi\xrightarrow{\rho_{\lambda}} G(\overline{\Q_{\lambda}})\to (G/[G^{\circ},G^{\circ}])(\overline{\Q_{\lambda}})$ where the latter arrow is the canonical projection. If for some closed point $x\in \mathcal{X}$ the eigenvalues of $\rho_{\lambda}^{ab}(\mathrm{Frob}_x)$ are plain of characteristic $p$, then there exists an irreducible representation $\rho_{\lambda'}:\Pi\to G(\overline{\Q_{\lambda'}})$ which is a companion of $\rho_{\lambda}$ in the sense of Definition \ref{definition 3.7} and such that $\rho_{\lambda'}^{ab}$ is compatible with $\rho_{\lambda}^{ab}$.
\end{theorem}

The curve case of this theorem is \cite[theorem 4.6]{Independence_of_monodromy_groups}. To prove that in the stack case, we first prove an extension of Deligne's theorem to stacks:
\begin{lemma}
\label{lemma 5.5}
    Let $\mathcal{X}$ be a normal geometrically integral algebraic stack of finite presentation over $\mathbb{F}_q$ and let $x$ be a geometric point. Let $\chi : \pi_1(\mathcal{X}, x) \to \overline{\Q_\ell}^*$ be a continuous character of the fundamental group. Then the image $\chi(\pi_1(\mathcal{X}_{\overline{\mathbb{F}_q}}, x))$ of the geometric fundamental group is a finite group.
\end{lemma}
\begin{proof}
    By Lemma \ref{lemma 4.7}, combined with the fact that open substacks of normal geometrically integral algebraic stacks over $\mathbb{F}_q$ are also normal and geometrically integral, we can freely shrink to an open substack. By Proposition \ref{proposition 4.16}, Theorem \cite[theorem 11.3]{noohi2002fundamental} and the fact that $\mathcal{X}$ is geometrically integral, there exists an open substack $\mathcal{U}$ which is open and dense in $\mathcal{X}$ and that $\mathcal{U}$ with its reduced structure is a monotonous gerbe, and that there is a gerbe-like morphism $f:\mathcal{U}\to \mathcal{Y}$, where $Y$ is a DM stack and $f$ induces an isomorphism $\pi_1(\mathcal{U})\xrightarrow{\sim} \pi_1(\mathcal{Y})$. Since a gerbe-like morphism is a universal homeomorphism, $\mathcal{Y}$ is geometrically irreducible. Moreover, as $f$ is smooth and of finite type, $\mathcal{Y}$ is normal. Thus, we are reduced to the DM stack case. 

    By \cite[Corollaire 6.1.1]{LMB}, every DM stack $\mathcal{X}$ of finite presentation over $\mathbb{F}_q$ has an open dense substack $\mathcal{U}$, such that $\mathcal{U}$ is isomorphic to a global quotient stack $[Y/G]$ with $G$ being a finite group scheme and $Y$ is normal and geometrically irreducible. So we are again reduced to the case of a global quotient DM stack of the above form: $[Y/G]$. The presentation morphism $Y\to [Y/G]$ induces an injection of fundamental groups $\pi_1(Y)\to \pi_1([Y/G])$ such that the image of $\pi_1(Y)$ in $\pi_1([Y/G])$ is a finite index subgroup, thus the image of $\pi_1(Y_{\overline{\mathbb{F}_q}})$ in $\pi_1([Y/G]_{\overline{\mathbb{F}_q}})$ is also finite. Since $Y$ is a normal and geometrically irreducible scheme over $\mathbb{F}_q$, $\chi(\pi_1(Y_{\overline{\mathbb{F}_q}}))$ is finite. Thus, $\chi(\pi_1([Y/G]_{\overline{\mathbb{F}_q}}))$ is also finite, proving the lemma.    
\end{proof}

For any non-Archimedean place $\lambda$ of $\overline{\mathbb{Q}}$ not dividing $p$, we define a group $\hat{\Pi}_{\lambda}^{\text{red}}$ as  $\hat{\Pi}_{\lambda}^{\text{red}}:=\hat{\Pi}^{\text{red}}_{\ell}\bigotimes_{\Q_\ell}\overline{\Q_{\lambda}}$, where $\hat{\Pi}^{\text{red}}_{\ell}$ is the projective limit of pairs $(G,\rho)$, where $G$ is a reductive algebraic group over $\Q_\ell$ and $\rho:\Pi\to G(\Q_\ell)$ is a continuous homomorphism with Zariski dense image.

Let $\text{Plain}_p \subset \overline{\mathbb{Q}}^{\times}$ be the multiplicative group of algebraic numbers plain of characteristic $p$. Since all roots of unity are plain of characteristic $p$, $\mu_{\infty}(\mathbb{Q}) \subset \text{Plain}_p$.

Regarding $\text{Plain}_p$ as a constant group scheme, we can define another group scheme $\mathcal{A}$ over $\overline{\Q}$ by $\mathcal{A}:= \text{Hom}(\text{Plain}_p, \mathbb{G}_m)$. And for each non-Archimedean place $\lambda$ of $\overline{\mathbb{Q}}$ not dividing $p$, let $\mathcal{A}_{\lambda} := \mathcal{A} \otimes_{\overline{\Q}} \mathbb{Q}_{\lambda}$. Since $\mu_{\infty}(\overline{\Q})$ is a subgroup of $\text{Plain}_p$, $\mathcal{A}$ maps onto $\text{Hom}(\mu_{\infty}(\overline{\Q}), \mathbb{G}_m) = \text{Hom}(\varinjlim_n\mu_n(\overline{\Q}), \mathbb{G}_m) = \varprojlim_n(\Z/n\Z)=\hat{\Z}$. 

By Lemma \ref{lemma 4.11}, there is a canonical homomorphism
\[
\Pi \to \mathrm{Gal}(\overline{\mathbb{F}}_p/\mathbb{F}_p) = \hat{\mathbb{Z}}
\]
with finite cokernel. Composing it with the epimorphism $\hat{\Pi}_{\lambda} \twoheadrightarrow \Pi$, one obtains a homomorphism $\hat{\Pi}_{\lambda} \to \hat{\mathbb{Z}}$.

Now, define $\hat{\Pi}^{\text{plain of char }p}_{\lambda}$ to be the fiber product of $\hat{\Pi}_{\lambda}$ and $\mathcal{A}_{\lambda}$ over $\hat{\mathbb{Z}}$.

Similarly, define $\hat{\Pi}^{\text{plain of char }p} \in \mathrm{Pro\text{-}red}(\overline{\mathbb{Q}})$ to be the fiber product of $\hat{\Pi}$ and $\mathcal{A}$ over $\hat{\mathbb{Z}}$. Since the images of $\hat{\Pi}$ and $\hat{\Pi}_{\lambda}$ in $\mathbf{Pro\text{-}ss}(\overline{\Q_{\lambda}})$ are canonically isomorphic, the images of $\hat{\Pi}^{\text{plain of char }p}$ and $\hat{\Pi}^{\text{plain of char }p}_{\lambda}$ in $\mathrm{Pro\text{-}red}(\overline{\mathbb{Q}_{\lambda}})$ are canonically isomorphic. (This proves the compatibility)

\vskip 1cm

We can apply the construction in \cite[\textsection 3.5]{drinfeld2018prosemisimple} with $A=\overline{\Z_{\lambda}}^{\times}$ and $G=\hat{\Pi}^{\text{red}}_{\lambda}$, and we get a map
\begin{align*}
    \hat{\Pi}^{\text{red}}_{\lambda}\to \hat{\Pi}_{\lambda}\times_{\hat{\Z}}\operatorname{Hom}(\overline{\Z_{\lambda}}^{\times},\mathbb{G}_m)
\end{align*}

\begin{lemma}
\label{lemma 5.6}
The above map is an isomorphism.    
\end{lemma}
\begin{proof}
    To ease notation, set $A: =\overline{\Z_{\lambda}}^{\times}$, and $G: =\hat{\Pi}^{\text{red}}_{\lambda}$. Following Drinfeld's notation in his lemma 3.5.1, let $Z$ be the center of $G^{\circ}$. In the construction of $G^{\circ}$, since all the representations have Zariski dense image, any structure map between two $\rho$'s is surjective. So centers are mapped into centers, and $Z$ is the projective limit of the centers of the components of $G^{\circ}$. Then there are two exact sequences:
\[
0 \to Z^\circ \to G^\circ \to G^{ss} \to 0,
\]
\[
0 \to \text{Hom}(A/A_{\text{tors}}, \mathbb{G}_m) \to G^{ss} \times_{\text{Hom}(A_{\text{tors}}, \mathbb{G}_m)} \text{Hom}(A, \mathbb{G}_m) \to G^{ss} \to 0.
\]

    By the short five lemma, it suffices to prove that the composition  $Z^\circ \to G^\circ \to \text{Hom}(A/A_{\text{tors}}, \mathbb{G}_m)$ is an isomorphism. This is equivalent to showing that the composition
\[
A/A_{\text{tors}} \xrightarrow{f} \text{Hom}(G^\circ, \mathbb{G}_m) \xrightarrow{g} \text{Hom}(Z^\circ, \mathbb{G}_m)
\]
is an isomorphism.

Note that $f$ is induced by the following map: 
\begin{align*}
    &A \simeq \operatorname{Hom}_{\text{cont}}(\widehat{\mathbb{Z}}, \overline{\mathbb{Q}}_{\lambda}^{\times}) \xrightarrow{\text{inflation}} \operatorname{Hom}_{\text{cont}}(\Pi, \overline{\mathbb{Q}}_{\lambda}^{\times})= \operatorname{Hom}(G, \mathbb{G}_m)\xrightarrow{\text{res}}\operatorname{Hom}(G^{\circ}, \mathbb{G}_m)\\
    &\alpha \longrightarrow (1\to \alpha) 
\end{align*}

Since the image of a torsion element in $A$ is a finite order character $\chi$ of $G$, the restriction of $\chi$ to $G^{\circ}$ is trivial, as the image of a connected group is also connected. Moreover, if we take a closer look at the above composition of maps, we observe that the image $\chi$ of $\alpha$ in $\operatorname{Hom}(G^{\circ}, \mathbb{G}_m)$ is the following: for a representation $\rho: \Pi'\to H$, where $H$ is a connected reductive group and $\Pi'$ is an open subgroup of $\Pi$, $\chi$ is just the restriction of $\chi_{\alpha}$ to $\Pi'$, where $\chi_{\alpha}$ is the inflation of the character $1\to \alpha$ to $\Pi$.

To prove the injectivity of $f$, suppose that $\alpha_1,\alpha_2$ are two elements in $A$ which induces the same character $\chi$, and denote by $\chi_1,\chi_2$ the image of $\alpha_1,\alpha_2$ in $G$ respectively. It is clear that $\chi_1,\chi_2$ will factor through some reductive algebraic group $G_1,G_2$. Since $\chi_1,\chi_2$ agree on $G^{\circ}$, there is some integer $N$ such that $\chi_1^N=\chi_2^N$, which means that $\alpha_1=\alpha_2$ modulo $A_{\mathrm{tors}}$.

To prove the surjectivity of $f$, suppose that $\chi$ is an element in $\operatorname{Hom}(G^{\circ}, \mathbb{G}_m)$. By the analysis of the explicit structure of $f$, $\chi$ can be viewed as a character of $\Pi'$, an open subgroup of $\Pi$. By Lemma \ref{lemma 5.5}, there is an integer $n$ such that $\chi^n$ factor through the absolute Galois group. By the above analysis again, there is an $\alpha$ such that $\chi^n$ is the image of $\alpha$. This means that the cokernel of $f$ is torsion. Since $f$ is injective and that $A/A_{\mathrm{tors}}$ is a $\Q$-vector space, $f$ is indeed surjective. Thus, $f$ is an isomorphism.

To prove that $g$ is an isomorphism, we first note that $A/A_{\text{tors}}$ is a $\Q$-vector space. Then we can use Milne's deconstruction theorem for reductive groups, \cite[Proposition 19.27,Proposition 19.29]{Milne_AG}. 

First we prove that $g$ is injective. Let $\alpha_1,\alpha_2\in \text{Hom}(G^\circ, \mathbb{G}_m)$, such that $g(\alpha_1)$ and $g(\alpha_2)$ are the same element in $\text{Hom}(Z^\circ, \mathbb{G}_m)$. Denote the image by $\beta$. Since the target of $\alpha_1$, $\alpha_2$ is reductive, $\alpha_1,\alpha_2$ factor through some connected reductive groups $G_1,G_2$. Denote by $Z_1,Z_2$ the center of $G_1,G_2$, and by $Z_1^{\circ},Z_2^{\circ}$ the identity component of $Z_1,Z_2$, respectively. Since $g(\alpha_1)=g(\alpha_2)$, there is a map $\phi:Z_1^{\circ}\to Z_2^{\circ}$ in the projective system such that $g(\alpha_1)=g(\alpha_2)\circ \phi$. Since $Z$ is the projective limit of the centers of the components of $G^{\circ}$, there is a map $\epsilon: G_1\to G_2$ such that the following diagram commutes:
\begin{center}
\begin{tikzcd}
Z_1^{\circ}\arrow[hookrightarrow]{r}\arrow[d,"\phi"]& G_1\arrow[d,"\epsilon"]\\
Z_2^{\circ}\arrow[hookrightarrow]{r}& G_2\\
\end{tikzcd}
\end{center}

Moreover, the following diagram commutes as well:
\begin{center}
\begin{tikzcd}
G_1\arrow[r,"Ab"]\arrow[d,"\epsilon"]& Z_1^{\circ}\arrow[d,]\\
G_2\arrow[r,"Ab"]& Z_2^{\circ}\\
\end{tikzcd}
\end{center}

Therefore, the composition of the above two diagrams also commutes:
\begin{center}
\begin{tikzcd}
Z_1^{\circ}\arrow[r,]\arrow[d,"\phi"]& Z_1^{\circ}\arrow[d,]\\
Z_2^{\circ}\arrow[r,]& Z_2^{\circ}\\
\end{tikzcd}
\end{center}

By the deconstruction theorem mentioned above, combined with the fact that $G$ is defined over $\overline{\Q_{\lambda}}$, and thus split, the composition of the injection $Z_i\to G_i$ with the abelianization $G_i\to Z_i^{\circ}$, is actually an isogeny. In particular, it is surjective. Since $\mathbb{G}_m$ is commutative, $\alpha_1,\alpha_2$ further factor through $Z_1^{\circ},Z_2^{\circ}$. Thus, by the commutativity of the above diagram and that $g(\alpha_1)=g(\alpha_2)$, $\alpha_1$ and $\alpha_2$ are indeed the same, proving the injectivity of $g$. 

To prove that $g$ is surjective, we first prove that the cokernel of $g$ is torsion. Let $\beta\in \text{Hom}(Z^\circ, \mathbb{G}_m)$. Then $\beta$ factors through some center of $H$, where $H$ is a component of $G^{\circ}$. By the above claim that the composition of the injection $Z_i\to G_i$ with the abelianization $G_i\to Z_i^{\circ}$, is actually a power map, some power of $\beta$ must be in the image of $g$. This proves that the cokernel of $g$ is torsion. However, since $\text{Hom}(G^\circ, \mathbb{G}_m)$ is a $\Q$-vector space, the cokernel is indeed trivial, which proves the surjectivity of $g$. Thus, $g$ is an isomorphism. 
\end{proof}

The following proposition is the key to identify those representations which can be in some compatible system.

\begin{proposition}
\label{proposition 5.7}
    Let $\mathcal{X}$ be a smooth geometrically integral algebraic stack of finite presentation over a finite field $\kappa$ of characteristic p and let $x$ be a geometric point. Let $l$ be a prime number not dividing $p$, and let $\rho:\pi_1(\mathcal{X},x)\to G(\overline{\Q_{\lambda}})$ be an irreducible representation with Zariski dense image. Then the following are equivalent:
    \begin{enumerate}
        \item[(1)] $\rho$ is plain of characteristic $p$.
        \item[(2)] there exists a faithful representation $\theta$ of $G$, with a finite extension $\kappa'/\kappa$, and an $x\in \mathcal{X}(\kappa')$, such that the Frobenius eigenvalues of $x$ in $\mathrm{det}\circ \theta\circ \rho$ are plain of characteristic $p$.
    \end{enumerate}
\end{proposition}
\begin{proof}
    It is clear that $(1)$ implies $(2)$. For the converse, let $\theta$ be the faithful representation of $G$. Write $\chi:=\mathrm{det}\circ \theta\circ \rho$. By Lemma \ref{lemma 5.5}, there exists an $n$ such that $\chi^n$ factors through $\pi_1(\mathcal{X},x)\twoheadrightarrow \mathrm{Gal}(\overline{\kappa}/\kappa)$. Let $\beta$ be the image of the geometric Frobenius $\mathrm{Frob}_x$ under $\chi^n$ of $x$. Suppose the dimension of the representation $\theta\circ \rho$ is $d$, and let $\epsilon$ be a rank one representation sending $\mathrm{Frob}_x$ to $\beta^{\frac{1}{dn}}$, then $(\theta\circ \rho)\bigotimes \epsilon$ is an irreducible representation with finite order determinant into the value group $\mathrm{GL}_n$. By \cite[theorem 0.1(1)]{Zheng_2018}, all the Frobenius eigenvalues of $(\theta\circ \rho)\bigotimes \epsilon$ are $p$-Weil numbers of weight $0$, and in particular, they are plain of characteristic $p$ in the sense of Definition \ref{definition 5.1}. Thus, the Frobenius eigenvalues of $\theta\circ \rho$ are also plain of characteristic $p$. By Proposition \ref{proposition 5.2}, the implication $(2)\Longrightarrow (1)$ is proved.
\end{proof}

\begin{proposition}
\label{proposition 5.8}
A finite-dimensional representation of $\hat{\Pi}^{\mathrm{red}}_{\lambda}$ is a representation of $\hat{\Pi}^{\text{plain of char }p}_{\lambda}$ if and only if the corresponding $\lambda$-adic local system (with value group $\mathrm{GL}_n$) on $X$ is plain of char $p$.
\end{proposition}
\begin{proof}
By definition, $\hat{\Pi}^{\text{plain of char }p}$ is the fiber product of $\hat{\Pi}$ and $\mathcal{A}$. A representation factors through $\hat{\Pi}^{\text{plain of char }p}$ if and only if its determinant (of the corresponding local system) takes $\mathrm{Frob}_x$ to elements in $\mathrm{Plain}_p$ for all closed point $x$. By Proposition \ref{proposition 5.7}, this condition is equivalent to the associated local system being plain of characteristic $p$.

\end{proof}

By construction, $\pi_0(\hat{\Pi}^{\text{plain of char }p}_{(\lambda)})=\pi_0(\hat{\Pi}_{(\lambda)})=\Pi$. %\text{ Explain a little }
Moreover, the Frobenius eigenvalues of a plain of characteristic $p$ represetation will be inside $\overline{\Q}$, so we can construct a similar diagram as (Diagram \ref{diag 5}):
\begin{align}
\label{diag 5}
    \Pi_{\mathrm{Fr}}\to [\hat{\Pi}^{\text{plain of char }p}](\overline{\Q})\to \Pi
\end{align}
\begin{theorem}
\label{theorem 5.9}
Let $\mathcal{X}$ be a smooth algebraic stack of finite presentation over $\mathbb{F}_q$. Let $\lambda$ and $\lambda'$ be non-Archimedean places of $\overline{\Q}$ not dividing $p$. Then there exists a unique isomorphism $\hat{\Pi}^{\text{plain of char }p}_{(\lambda)}\xrightarrow{\sim} \hat{\Pi}^{\text{plain of char }p}_{(\lambda')}$ in the category $\mathbf{Pro\text{-}red}(\overline{\Q})$ which sends the diagram $\Pi_{\mathrm{Fr}} \to [\hat{\Pi}^{\text{plain of char }p}_{(\lambda)}](\overline{\Q})\to \Pi$ to a similar diagram $\Pi_{\mathrm{Fr}} \to [\hat{\Pi}^{\text{plain of char }p}_{(\lambda')}](\overline{\Q})\to \Pi$.    
\end{theorem}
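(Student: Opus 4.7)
The plan is to deduce Theorem \ref{theorem 5.9} from the semisimple statement Theorem \ref{theorem 3.2}, exploiting the fiber-product description of $\hat{\Pi}^{\text{plain of char $p$}}_{\lambda}$ introduced in the setup preceding the theorem. By Lemma \ref{lemma 4.6} and the definition of $\mathcal{A}_{\lambda}$, one has $\hat{\Pi}^{\text{plain of char $p$}}_{\lambda} = \hat{\Pi}_{\lambda} \times_{\hat{\Z}} \mathcal{A}_{\lambda}$, and the analogous formula holds after passing through the equivalence Pro-ss$(\overline{\Q}) \simeq$ Pro-ss$(\overline{\Q_\lambda})$. The crucial observation is that the abelian factor $\mathcal{A} = \Hom(\text{Plain}_p, \mathbb{G}_m)$ is defined intrinsically over $\overline{\Q}$ and carries no $\lambda$-dependence, so the only $\lambda$-dependent piece in the decomposition is the semisimple factor $\hat{\Pi}_{(\lambda)}$.

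First I would apply Theorem \ref{theorem 3.2} to produce the unique isomorphism $\Psi_{\mathrm{ss}}: \hat{\Pi}_{(\lambda)} \xrightarrow{\sim} \hat{\Pi}_{(\lambda')}$ in Pro-ss$(\overline{\Q})$ compatible with the Frobenius diagrams. Because $\Psi_{\mathrm{ss}}$ respects the surjection to $\pi_0 = \Pi$, it is automatically compatible with the composite $\hat{\Pi}_{(\lambda)} \twoheadrightarrow \Pi \to \Gal(\overline{\mathbb{F}}_p/\mathbb{F}_p) = \hat{\Z}$. Pairing $\Psi_{\mathrm{ss}}$ with $\id_{\mathcal{A}}$, the universal property of fiber products yields
\[
\Psi := \Psi_{\mathrm{ss}} \times_{\hat{\Z}} \id_{\mathcal{A}} : \hat{\Pi}^{\text{plain of char $p$}}_{(\lambda)} \xrightarrow{\sim} \hat{\Pi}^{\text{plain of char $p$}}_{(\lambda')},
\]
which is an isomorphism of pro-reductive groups over $\overline{\Q}$.

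Next I would verify that $\Psi$ sends the $\lambda$-diagram to its $\lambda'$-counterpart. An element of $[\hat{\Pi}^{\text{plain of char $p$}}_{(\lambda)}](\overline{\Q})$ is determined by its pair of images in $[\hat{\Pi}_{(\lambda)}](\overline{\Q})$ and $\mathcal{A}(\overline{\Q})$ (using that $\mathcal{A}$, being abelian, coincides with its GIT quotient). The $\hat{\Pi}_{(\lambda)}$-component of each $F_{\tilde{x}}^n$ is carried correctly by Theorem \ref{theorem 3.2}, while its $\mathcal{A}$-component, coming from the geometric Frobenius action on $\text{Plain}_p \subset \overline{\Q}^\times$, is an intrinsic function on $\Pi_{\mathrm{Fr}}$ not involving $\lambda$, hence preserved by $\id_{\mathcal{A}}$. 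Compatibility with the right projection to $\Pi$ is inherited directly from $\Psi_{\mathrm{ss}}$.

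For uniqueness, any such $\Psi'$ descends along the projection to the semisimple quotient to an isomorphism $\hat{\Pi}_{(\lambda)} \xrightarrow{\sim} \hat{\Pi}_{(\lambda')}$ compatible with the Frobenius diagrams, which Theorem \ref{theorem 3.2} forces to coincide with $\Psi_{\mathrm{ss}}$. The remaining ambiguity lies in the $\mathcal{A}$-component, which must cover $\id_{\hat{\Z}}$ and preserve the image of $\Pi_{\mathrm{Fr}}$. The step I expect to require the most care is precisely this rigidity of the abelian part: one must show that the image of $\Pi_{\mathrm{Fr}}$ in $\mathcal{A}(\overline{\Q})$ — which, via Proposition \ref{proposition 4.7}, sees the geometric Frobenius eigenvalues of every plain-of-characteristic-$p$ representation — is rich enough to force the $\mathcal{A}$-part of $\Psi'$ to be the identity. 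Once this rigidity is established, uniqueness of $\Psi$ follows.
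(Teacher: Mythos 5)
Your proposal follows essentially the same route as the paper: the paper likewise obtains existence from the fiber-product description $\hat{\Pi}^{\text{plain of char $p$}}_{(\lambda)}=\hat{\Pi}_{(\lambda)}\times_{\hat{\Z}}\mathcal{A}$ together with Theorem \ref{theorem 3.2} (and Proposition \ref{proposition 4.8}), and deduces uniqueness from the uniqueness in Theorem \ref{theorem 3.2} combined with exactly the rigidity you flag, which the paper phrases as the density of the image of $\Pi_{\mathrm{Fr}}$ in $\mathcal{A}$. So the step you single out as needing care is precisely the one fact the paper asserts for uniqueness; otherwise your argument matches the paper's proof.
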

\begin{proof}
    By definition of $\hat{\Pi}^{\text{plain of char }p}_{\lambda}$, it is defined as a fiber product of $\hat{\Pi}_{\lambda}$ and $\mathcal{A}_{\lambda}$ over $\hat{Z}$, where we recall that $\mathcal{A}_{\lambda}=\operatorname{Hom}(\text{Plain}_p,\mathbb{G}_m)\bigotimes_{\overline{\Q}}\overline{\Q_{\lambda}}$. Thus, by Theorem \ref{theorem 4.2}, and the above Proposition \ref{proposition 5.8}, the existence of the above isomorphism is proved.

    For the uniqueness, we observe that the image of $\Pi_{\mathrm{Fr}}$ is Zariski dense in $\mathcal{A}$. Therefore, combined with the uniqueness result of Theorem \ref{theorem 4.2}, we obtained the uniqueness here.
    
\end{proof}
\begin{proof}[Proof of Theorem \ref{theorem 5.4}]
Let $G$ be a reductive algebraic $\Q$-group. An irreducible representation $\rho_{\lambda}:\Pi\to G(\overline{\Q_{\lambda}})$ such that for some $x$, the eigenvalues of $\rho_{\lambda}^{ab}(\mathrm{Frob}_x)$ are plain of characteristic $p$, is equivalent to a plain of characteristic $p$ irreducible representation, by Proposition \ref{proposition 5.7}.

Thus, it is the same as a representation of $\hat{\Pi}_{\lambda}^{\text{plain of char }p}$ into $\mathrm{GL}_{n,\overline{\Q_{\lambda}}}$. By the equivalence of categories: $\mathbf{Pro\text{-}red}(\overline{\Q})\xrightarrow{\sim} \mathbf{Pro\text{-}red}(\overline{\Q}_\lambda)$, we obtain a representation of $\hat{\Pi}_{(\lambda)}^{\text{plain of char }p}$ into $\mathrm{GL}_{n,\overline{\Q}}$. $\Psi^{-1}$, the inverse of the unique isomorphism, induces a representation of $\hat{\Pi}_{(\lambda')}^{\text{plain of char }p}$ into $\mathrm{GL}_{n,\overline{\Q}}$. Again by the equivalence of categories: $\mathbf{Pro\text{-}red}(\overline{\Q})\xrightarrow{\sim} \mathbf{Pro\text{-}red}(\overline{\Q}_{\lambda'})$, we obtain a representation of $\rho_{\lambda'}:\hat{\Pi}_{\lambda'}^{\text{plain of char }p}$ into $\mathrm{GL}_{n,\overline{\Q_{\lambda'}}}$. Since the $\Psi$ respect the Diagram \ref{diag 1} and its $\lambda'$ counterpart, $\rho_{\lambda}$ and $\rho_{\lambda'}$ are compatible and $\rho_{\lambda}^{ab}$ and $\rho_{\lambda'}^{ab}$ are also compatible. 
\end{proof}

\section{Application}
Here, the major objective is to apply Theorem \ref{theorem 4.2} and Theorem \ref{theorem 5.9} to prove the compatibility of the canonical $l$-adic local system on Shimura stacks (see Theorem \ref{theorem 6.25}, Theorem \ref{theorem 6.29}).
\subsection{The adjoint case}
We first restrict ourselves to the case where the canonical local system takes values in an adjoint group. 
\subsubsection{Definition of the Shimura stacks and Construction of the Canonical $l$-adic local system on Shimura stacks}
\label{Sec 6.1.1}
\paragraph{Definition of the Shimura stacks}
Let $(G,X)$ be a Shimura datum satisfying axiom SV5 as in \cite[Chapter 5]{Milne_SVI}. Let $K$ be a possibly non-neat compact open subgroup of $G$. Let $E = E(G, X)$ be the reflex field of $(G, X)$. The theory of canonical models (\cite{Deligne_Shimura}, \cite{Milne_SV}) produces an inverse system $\{\mathrm{Sh}_K\}_K$, indexed over neat compact open subgroups of $G(\mathbb{A}_f)$, of smooth quasi-projective varieties over $E$.
\begin{lemma}
\label{lemma 6.1}
For the reductive group $G$, let $G(\R)_{+}$ be the kernel of the following map: $G(\R)\xrightarrow{\mathrm{ad}(\R)}G^{\mathrm{ad}}(\R)\to \pi_0(G^{\mathrm{ad}}(\R))$. Let $G(\Q)_{+}=G(\Q)\cap G(\R)_{+}$. For two neat normal open subgroups $K_0,K_1$ of finite index in $K$, let $K_2=K_0\cap K_1$, then viewed as DM stacks over the reflex field $E:=E(G,X)$, $[\mathrm{Sh}_{K_i}(G,X)/(K/K_i)]\cong [\mathrm{Sh}_{K_2}(G,X)/(K/K_2)]$ canonically for $i=0,1$. Also, $[\mathrm{Sh}_{K_i}(G,X)/(K/K_i)](\C)\cong \bigsqcup_{g\in \mathcal{C}}[\Gamma_g \backslash X^{+}]$, where $X^+$ is a connected component of $X$, $\mathcal{C}$ is a set of representatives for the double coset space $G(\Q)_{+}\backslash G(\mathbb{A}_f)/K$ and $\Gamma_g=gKg^{-1}\cap G(\Q)_{+}$.
\end{lemma}
\begin{proof}
First we identify the two quotient stacks. Without loss of generality I can assume that $K_0\subseteq K_1$. By \cite[III, Corollaire 2.5]{algebraic_groups}, since $K_1/K_0$ act freely on $G(\Q)\backslash X\times G(\mathbb{A}_f)/K_0=\mathrm{Sh}_{K_0}(G,X)(\C)$, $K_1/K_0$ act freely on $\mathrm{Sh}_{K_0}(G,X)$. Since $\mathrm{Sh}_{K_0}(G,X)$ is quasi-projective (it has a good compactification), any finite set of points is contained in an affine open subscheme, so the quotient algebraic space $\mathrm{Sh}_{K_0}(G,X)/(K_1/K_0)$ is actually a scheme. Then $\mathrm{Sh}_{K_0}(G,X)/(K_1/K_0)\cong \mathrm{Sh}_{K_1}(G,X)$ by \cite[Corollary 3.16]{Milne_SVI}. Therefore for a $E$-scheme $T$, $(\mathrm{Sh}_{K_0}(G,X)/(K_1/K_0))(T)=\mathrm{Sh}_{K_0}(G,X)(T)/(K_1/K_0)=\mathrm{Sh}_{K_1}(G,X)(T)$.

The canonical projection $\mathrm{Sh}_{K_0}(G,X)/(K_1/K_0)\to \mathrm{Sh}_{K_1}(G,X)$ induced the canonical morphism $[\mathrm{Sh}_{K_0}(G,X)/(K/K_0)]^{pre}\to [\mathrm{Sh}_{K_1}(G,X)/(K/K_1)]^{pre}$, where the superscript "pre" means the quotient prestack. We prove that this canonical morphism is an isomorphism. For a scheme $T$ over the reflex field $E$, by definition, $[\mathrm{Sh}_{K_i}(G,X)/(K/K_i)]^{pre}(T)$ is the groupoid $[\mathrm{Sh}_{K_i}(G,X)(T)/(K/K_i)]$, for $i=0,1$. The object of the groupoid, say for $i=0$, are  the elements in $\mathrm{Sh}_{K_0}(G,X)(T)$, and a morphism $a\to b$ is an element $g\in K/K_0$ such that $b=g.a$. Since $(\mathrm{Sh}_{K_0}(G,X)/(K_1/K_0))(T)=\mathrm{Sh}_{K_1}(G,X)(T)$, the canonical morphism induces an equivalence of groupoids, and this equivalence is natural in $T$. Therefore, the two quotient prestacks are isomorphic. By the universal property of stackification, the two quotient stacks are therefore also isomorphic.

To prove the second statement (for $i=0$, say, the other is similar), first note that since $\C$ is algebraically closed, the only connected \'etale cover of itself is trivial. So $[\mathrm{Sh}_{K_0}(G,X)/(K/K_0)](\C)=[\mathrm{Sh}_{K_0}(G,X)/(K/K_0)]^{pre}(\C)=[\mathrm{Sh}_{K_0}(G,X)(\C)/(K/K_0)]$. There is a map:

\begin{align*} 
    \phi:[G(\Q)\backslash X\times G(\mathbb{A}_f)/K]\longmapsto[\mathrm{Sh}_{K_0}(G,X)(\C)/(K/K_0)].
\end{align*}
On the level of object, it maps $(x,g)$ to $\overline{(x,g)}$, the equivalence class of $(x,g)$ in $\mathrm{Sh}_{K_0}(G,X)(\C)=G(\Q)\backslash X\times G(\mathbb{A}_f)/K_0$. On the level of morphisms, it maps $(q,k)$ to $kK_0$. Let me show that this map is an isomorphism of groupoids.

Since both the source and target are groupoids, to prove the full faithfulness, I only need to show that the stabilizers of an object in the source is mapped bijectively to the stabilizers of its image in the target. 

For the faithfulness, fix an object $(x,g)\in [G(\Q)\backslash X\times G(\mathbb{A}_f)/K]$. Suppose that for some $(q_1,k_1),(q_2,k_2)$ which stabilize $(x,g)$, $k_1K_0=k_2K_0$. Since the natural map $G(\Q)_{+}\backslash X^{+}\times G(\mathbb{A}_f)\to G(\Q)\backslash X\times G(\mathbb{A}_f)$ is an isomorphism, I may assume that $x\in X^{+}$ and that $q_1,q_2\in G(\Q)_{+}$. In this case, $(q_2^{-1}q_1,k_1k_2^{-1})$ also stabilizes $(x,g)$. Thus $q_2^{-1}q_1\in G(\Q)_{+}\cap gK_0g^{-1}$. Denote the later group by $\lambda_g$, it is a congruence subgroup in $G(\Q)$ with arithmetic image in $G^{ad}(\Q)\subseteq G^{ad}(\R)$. Since $G^{ad}(\R)$ is a semisimple real Lie group with trivial center, it admits a faithful finite dimensional representation. So by \cite[proposition 3.6]{Milne_SVI}, $\lambda_g$ is a discrete subgroup of $G^{ad}(\R)$. Therefore, its image, which I denote as $\Lambda_g$, in $\mathrm{Aut}(X^+)$ is also discrete. Since $K_0$ is neat, $\Lambda_g$ is also neat, and thus torsion-free. So by \cite[proposition 3.1]{Milne_SVI}, $\Lambda_g$ acts freely on $X^{+}$, and hence $q_2=q_1$. As a result, $k_1=k_2$ as well. This proves the faithfulness of $\phi$.

To prove the fullness of $\phi$, suppose that some $k\in K$ stabilizes the equivalence class of $(x,g)$. Then there is a $q\in G(\Q)$, $k\in K_0$ such that $(x,gk^{-1})=(qx,qgk_0)$. Then $(q,k_0k)$ stabilizes $(x,g)$ and $k_0kK_0=kK_0$ since $K_0$ is normal. This proves the fullness of $\phi$.

The essential surjectivity is clear, since for any equivalence class of $(x,g)\in \mathrm{Sh}_{K_0}(G,X)(\C)$, $(x,g)\in [G(\Q)\backslash X\times G(\mathbb{A}_f)/K]$ will be mapped to that equivalence class. Therefore, $\phi$ is an isomorphism. 

Now, there is another map 
\begin{align*}
    \psi:\bigsqcup_{\mathcal{C}}[\Gamma_g\backslash X^{+}]\longmapsto[G(\Q)_{+}\backslash X^{+}\times G(\mathbb{A}_f)/K].
\end{align*} On the level of objects, given an object $x$ in $[\Gamma_g\backslash X^{+}]$ for some $g$, $\psi$ maps $x$ to $(x,g)$ for this specific $g$. On the level of morphisms, given $q\in gKg^{-1}\cap G(\Q)_{+}$, there is a $k\in K$ such that $q=gkg^{-1}$. $\psi$ maps $q$ to $(q,k)$.

The faithfulness is clear by the description of $\psi$. To prove the fullness, if $(q,k)$ stabilizes $(x,g)$, then $qgk=g$, so $q$ stabilizes $x$ and belongs to $\Gamma_g$.

To prove the essential surjectivity, for an object $(x,h)$ in $[G(\Q)_{+}\backslash X^{+}\times G(\mathbb{A}_f)/K]$, there is some $q\in G(\Q)_{+},g\in \mathcal{C},k\in K$ such that $h=qgk$. Then $(x,h)$ is isomorphic to the object $(q^{-1}x,g)$, which is the image of $q^{-1}x$ in $[\Gamma_g\backslash X^{+}]$. Thus, $\psi$ is also essentially surjective and thus an isomorphism.

Therefore, $[\mathrm{Sh}_{K_i}(G,X)/(K/K_i)](\C)\cong \bigsqcup_{g\in \mathcal{C}}[\Gamma_g \backslash X^{+}]$, proving the second claim of this lemma.

\end{proof}
\begin{definition}
    For a compact open subgroup $K$ of $G(\mathbb{A}_f)$, define $\mathrm{Sh}_{K}[G,X]:=[\mathrm{Sh}_{K_0}(G,X)/(K/K_0)]$ for an open neat subgroup $K_0$ in $K$ of finite index.
\end{definition}
\begin{lemma}
\label{lemma 6.3}
    Let $V_1, V_2, \dots, V_n$ denotes the geometrically connected components of $\mathrm{Sh}_{K_0}(G,X)$. Then $K/K_0$ acts on $\{V_1,V_2,...,V_n\}$ transitively. The geometrically connected components of $\mathrm{Sh}_{K}[G,X]$ are $[\bigsqcup_{i \in \mathcal{I}}V_i/(K/K_0)]$, where $\mathcal{I}$ is an orbit in $\{V_1,V_2,...,V_n\}$ under the action of $K/K_0$, and that $[\bigsqcup_{i \in \mathcal{I}}V_i/(K/K_0)](\C)=[\Gamma_g\backslash X^{+}]$ for some $g$ in $\mathcal{C}$. Moreover, let $\mathcal{I}_1$ be the orbit for $V_1$, and let $H$ be the stabilizer of $V_1$ in $K/K_0$, then $[\bigsqcup_{i \in \mathcal{I}_1}V_i/(K/K_0)]\cong [V_1/H]$.
\end{lemma}
\begin{proof}
    By \cite[Theorem 7.2]{Shimura's_conjecture} and \cite[Theorem 13.6]{Milne_SVI}, $K/K_0$ can be viewed as a constant group scheme. Since the $V_i$'s are all geometrically connected, it suffices to prove that the stack $[\bigsqcup_{i \in \mathcal{I}}V_i/(K/K_0)]$ is connected whenever the $V_i$'s are in the same orbit. Let $U$ be a connected component of $[\bigsqcup_{i \in \mathcal{I}}V_i/(K/K_0)]$. Since $\bigsqcup_{i \in \mathcal{I}}V_i\to [\bigsqcup_{i \in \mathcal{I}}V_i/(K/K_0)]$ is a surjection, there must be some $i$ such that the image of $V_i$ in $[\bigsqcup_{i \in \mathcal{I}}V_i/(K/K_0)]$ has a nonempty intersection with $U$. Since $U$ is a connected component, and that $V_i$ is connected, $U$ must contain the image of $V_i$. However, since all $V_i$'s are in the same orbit, $U$ has an nonempty intersection with the image of $V_j$ in $[\bigsqcup_{i \in \mathcal{I}}V_i/(K/K_0)]$ for all other $j\ne i$. Thus, $U$ contains the image of $V_i$ for all $i\in \mathcal{I}$, i.e., $U=[\bigsqcup_{i \in \mathcal{I}}V_i/(K/K_0)]$, so $[\bigsqcup_{i \in \mathcal{I}}V_i/(K/K_0)]$ is connected, proving the first statement. 

    For the second statement, $[\bigsqcup_{i \in \mathcal{I}}V_i/(K/K_0)](\C)=[\bigsqcup_{i \in \mathcal{I}}V_i/(K/K_0)]^{pre}(\C)=[\bigsqcup_{i \in \mathcal{I}}V_i(\C)/(K/K_0)]$, where the last one is regarded as a groupoid. Denote $G(\Q)_{+}\backslash G(\mathbb{A}_f)/K_0$ by $\mathcal{C}_0$, then each $V_i$ corresponds to some element in $\mathcal{C}_0$, say $h_i$. $K/K_0$ acts on $\mathcal{C}_0$ by right translation. If $V_i$ and $V_j$ are in the same orbit under the action of $K/K_0$, then $h_i,h_j$ are in the same orbit under the action of $K/K_0$. Denote this orbit, which is also an element in $\mathcal{C}=G(\Q)_{+}\backslash G(\mathbb{A}_f)/K$, by $g$, then $[\bigsqcup_{i \in \mathcal{I}}V_i(\C)/(K/K_0)]=[\bigsqcup_{i}(\Gamma_{h_i}\backslash X^{+})/(K/K_0)]=[\Gamma_g\backslash X^{+}]$, proving the second statement.

    To prove the last statement, by the universal property of stackification, suffices to prove that the canonical morphism $f:[V_1/H]^{pre}\cong [\bigsqcup_{i \in \mathcal{I}_1}V_i/(K/K_0)]^{pre}$ of prestacks induced by the inclusion of $V_1\to $ is an isomorphism. The fully faithfulness of $f$ is clear since $H$ is the stabilizer of $V_1$, and the essential surjectivity is also clear since $\mathcal{I}_1$ is the orbit of $V_1$. Therefore, $f$ is an isomorphism, which proves the last statement.

\end{proof}

\paragraph{Construction of the Canonical $l$-adic local system on Shimura stacks}

Fix a point $s \in \mathrm{Sh}(\C)$. For a neat open compact subgroup $K_0$ of $K$, let $S_{K_0,s}$ be the geometrically connected component of $\mathrm{Sh}_{K_0}(G,X)$ containing $sK_0$, and let $H_{K_0}$ be the stabilizer of $S_{K_0,s}$ in $K/K_0$. Define $[S_{K,s}]=[S_{K_0,s}/H_{K_0}]$, it is the geometrically connected component of $\mathrm{Sh}_K[G,X]$ containing the image of $s$. Since $K_0$ is an arbitrary compact open neat subgroup in $K$, and that $\cap_{K_0}E_{K_0,s} = E_{K,s}$ for $K_0$ ranging over compact open neat subgroup in $K$, $[S_{K,s}]$ is unambiguously defined and defined over $E_{K,s}$. To simplify notation, in the \'etale fundamental groups we will simply write $\overline{s}$ for any of these geometric points $sK$ (sometimes with a subscript to emphasize field of definition for $\overline{s})$.

By the definition of the Shimura stacks, for a chosen neat open subgroup $K_0$ of finite index in $K$, $\mathrm{Sh}_{K_0}(G,X)$ is a finite \'etale cover of $[\mathrm{Sh}_{K_0}(G,X)/(K/K_0)]$. Let $K'$ be an open normal subgroup in $K$. and let $K''=K'\cap K_0$, then $K''$ is neat, open and normal in $K$. So by Lemma \ref{lemma 6.1}, $\mathrm{Sh}_{K}[G,X]\cong [\mathrm{Sh}_{K''}(G,X)/(K/K'')]$ and $\mathrm{Sh}_{K'}[G,X]\cong [\mathrm{Sh}_{K''}(G,X)/(K'/K'')]$. Thus the Shimura stack $\mathrm{Sh}_{K'}[G,X]$ is a finite Galois cover of $\mathrm{Sh}_{K}[G,X]$ with Galois group $K/K'$, and $[S_{K',s}]/[S_{K,s}]$ is also a Galois cover with Galois group a subgroup of $K/K'$. Call the field of definition for $[S_{K',s}]$ as $E_{K',s}$. 

The above gives a representation:
\begin{align*}
    \pi_1([S_{K,s}],\overline{s})\to K/K''
\end{align*}

Taking the limit over all the open compact subgroup $K'$ of $K$, there is a representation:
\begin{align*}
    \rho_{K,s}:\pi_1([S_{K,s}],\overline{s})\to K
\end{align*}
As $K\in G(\mathbb{A}_f)$, for all prime $l$, there is a projection:
\begin{align*}
    \rho_{K,s,l}:\pi_1([S_{K,s}],\overline{s})\to K\hookrightarrow G(\mathbb{A}_f)\twoheadrightarrow G(\Q_\ell) 
\end{align*}

\begin{lemma}
\label{lemma 6.4}
    Let $\mathcal{X}$ be a connected normal algebraic stack of finite presentation over an algebraically closed characteristic $0$ field $F$, and let $\Omega$ be a field extension of $F$ such that $\Omega$ is also an algebraically closed field of characteristic $0$. Let $x$ be an $F$ point of $\mathcal{X}$, and denote by $x_{\Omega}$ the pullback of $x$ to $\mathcal{X}_{\Omega}$, then the following canonical map $\pi_1(\mathcal{X}_{\Omega},x_{\Omega})\to \pi_1(\mathcal{X},x)$ is an isomorphism.
\end{lemma}
\begin{proof}
To prove that the canonical morphism is an isomorphism, it suffices to prove that the functor $F: F\acute{E}t(\mathcal{X}) \to F\acute{E}t(\mathcal{X}_{\Omega})$ from the category of finite \'etale covers of $\mathcal{X}$ to the category of finite \'etale covers of $\mathcal{X}_{\Omega}$ is fully faithful and essentially surjective. 

We prove the fully faithfulness by the following lemma:

\begin{lemma}[Descending of morphisms of finite \'etale covers of algebraic stacks]
\label{lemma 6.5}
Let $\mathcal{Z}$ be an algebraic stack of finite presentation over an algebraically closed field $k$, let $\mathcal{E}_1, \mathcal{E}_2$ be two finite \'etale schemes over $\mathcal{Z}$, and let $K/k$ be a field extension. The natural map
\[
\operatorname{Hom}_{\mathcal{Z}}(\mathcal{E}_1, \mathcal{E}_2)
\;\longrightarrow\;
\operatorname{Hom}_{\mathcal{Z}_K}((\mathcal{E}_1)_K, (\mathcal{E}_2)_K)
\]
is bijective.
\end{lemma}

\begin{proof}
Since $\mathcal{Z}$ is Noetherian, we may assume that $\mathcal{E}_1$ is connected (as $\mathcal{E}_1$ is also Noetherian, the connected components in $\mathcal{E}_1$ are open). By the universal property of pullbacks, the set of morphisms $\mathcal{E}_1 \to \mathcal{E}_2$ over $\mathcal{Z}$ is in bijection with the set of sections of the \'etale morphism (the canonical projection)
\[
\mathcal{E}_1 \times_\mathcal{Z} \mathcal{E}_2 \longrightarrow \mathcal{E}_1
\]
over $\mathcal{Z}$. The latter set is in bijection with
\[
\pi_0(\mathcal{E}_1 \times_{\mathcal{Z}} \mathcal{E}_2).
\]
Indeed, since the projection
\[
\mathcal{E}_1 \times_{\mathcal{Z}} \mathcal{E}_2 \longrightarrow \mathcal{E}_1
\]
is finite \'etale, any section is itself finite \'etale (by cancellation properties for morphisms of algebraic stacks), hence an open and closed immersion. Its image therefore defines a connected component of
\(\mathcal{E}_1 \times_{\mathcal{Z}} \mathcal{E}_2\).

Similarly, since $\mathcal{Z}_K$ is also Noetherian, the same argument applies over $\mathcal{Z}_K$. Moreover, the natural map
\[
\pi_0((\mathcal{E}_1)_K \times_{\mathcal{Z}_K} (\mathcal{E}_2)_K)
\;\longrightarrow\;
\pi_0(\mathcal{E}_1 \times_\mathcal{Z} \mathcal{E}_2)
\]
is bijective. This completes the proof.
\end{proof}

Next we proof the essential surjectivity by the following lemmas:
\begin{lemma}
\label{lemma 6.6}
    The notations are the same as in Lemma \ref{lemma 6.4}. If $\mathcal{X}$ is an algebraic space, the functor $F: F\acute{E}t(\mathcal{X}) \to F\acute{E}t(\mathcal{X}_{\Omega})$ is essentially surjective.
\end{lemma}
\begin{proof}
    We use presentations of algebraic spaces by schemes to prove the lemma. Write $\mathcal{X}_F$ for $\mathcal{X}$ to emphasize that it is originally defined on $F$.
    Suppose $\pi: \mathcal{E}_{\Omega}\to \mathcal{X}_{\Omega}$ is a finite \'etale covering. Let $U_F\to \mathcal{X}_F$ be an \'etale surjection with \(U_F\) a scheme of finite presentation over $F$, and let $R_F: = U_F\times_{\mathcal{X}_F}U_F\overset{\mathrm{pr}_1}{\underset{\mathrm{pr}_2}\rightrightarrows} U_F$ be the relation scheme. Then $U_{\Omega}\to \mathcal{X}_{\Omega}$ is an \'etale surjection and $V_{\Omega}:=U_{\Omega}\times_{\mathcal{X}_{\Omega}}\mathcal{E}_{\Omega}\to U_{\Omega}$, as the pullback of the representable and finite \'etale morphism $\pi: \mathcal{E}_{\Omega}\to \mathcal{X}_{\Omega}$, is a finite and \'etale morphism between schemes. By \cite[Corollary 2.10]{ShuNotes}, $V_{\Omega}\to U_{\Omega}$ descends to $F$, and we pull $V_F\to U_F$ back along $\mathrm{pr}_1$, $\mathrm{pr}_2$ to obtain two $R_F$-schemes:
    \[
    \mathrm{pr}_1^*V_F \;=\; V_F\times_{U_F,\mathrm{pr}_1} R_F,
    \qquad
    \mathrm{pr}_2^*V_F \;=\; V_F\times_{U_F,\mathrm{pr}_2} R_F.
    \]
    Over $\Omega$ the two pullbacks of $V_{\Omega}$ along $R_{\Omega}\overset{\mathrm{pr}_1}{\underset{\mathrm{pr}_2}\rightrightarrows} U_{\Omega}$ are canonically isomorphic, since $V_{\Omega}$ was obtained by pulling $\mathcal{E}_{\Omega}$ back along $U_{\Omega}\to\mathcal X_{\Omega}$.
    By \cite[Corollary 2.10]{ShuNotes} again, that canonical isomorphism and its cocycle condition
    are defined over $F$; we obtain an isomorphism of $R_F$-schemes
    \[
    \varphi_F:\mathrm{pr}_1^*V_F \xrightarrow{\sim} \mathrm{pr}_2^*V_F
    \]
    which satisfies the cocycle condition on $U_F\times_{\mathcal X_{F}} U_F \times_{\mathcal X_{F}} U_F$.
    Thus we are able to form the relation scheme \(R_{V,F}\) on \(V_F\).
    Using $\varphi_F$ we define a relation on $V_F$ by setting
    \[
    R_{V,F} \;:=\; \mathrm{pr}_1^*V_F \;\cong_{\varphi_F}\; \mathrm{pr}_2^*V_F
    \qquad\text{with the two structure maps } s,t: R_{V,F}\rightrightarrows V_F
    \]
    given by the projections to the two factors of $\mathrm{pr}_i^*V_F\cong V_F\times_{U_F}R_F$.

    Now we are able to construct the descent $\mathcal{E}_{F}$. Define the presheaf (on the big fppf site over $\mathcal{X}_{F}$)
    \[
    \mathcal{F} \;=\; V_F / R_{V,F},
    \]
    Let $\mathcal{E}_{F}$ denote the fppf sheafification of $\mathcal F$.
    
    Because $R_F\overset{\mathrm{pr}_1}{\underset{\mathrm{pr}_2}{\rightrightarrows}} U_F$ is an \'etale equivalence relation, the relation $R_{V,F}\rightrightarrows V_F$ is also an \'etale equivalence relation, as it is the pullback of $R_F\overset{\mathrm{pr}_1}{\underset{\mathrm{pr}_2}{\rightrightarrows}} U_F$ along $V_F\to U_F$. It is clear that the fppf sheafification is representable
    by an algebraic space; thus $\mathcal{E}_{F}$ is an algebraic space and the canonical map
    \[
    q:V_F \to \mathcal{E}_{F}
    \]
    is an \'etale surjection presenting $\mathcal{E}_{F}$ as the quotient of $V_F$ by $R_{V,F}$.
    
    Finally we construct the map \(\mathcal E_{F}\to\mathcal X_{F}\) and check its properties. By our construction above, the map $V_F \xrightarrow{} U_F$ descends to a map of algebraic spaces
    \[
    \pi_F: \mathcal{E}_{F} \longrightarrow \mathcal X_{F}.
    \]
    Locally on $\mathcal X_{F}$ (pull back along the \'etale cover $U_F\to\mathcal X_{F}$) this map is identified with $V_F\to U_F$,
    which is finite \'etale; therefore $\pi_F$ is representable and finite \'etale. Finally, by construction all objects and identifications were chosen so that
    base change along $F\hookrightarrow \Omega$ recovers the original $V_{\Omega}\to U_{\Omega}$, the original relation on it, and hence the original cover $\mathcal{E}_{\Omega}$ and morphism $\pi$. Thus the lemma is proved.    
\end{proof}

\begin{lemma}
\label{lemma 6.7}
    More generally, if $\mathcal{X}$ is an algebraic stack, the functor $F: F\acute{E}t(\mathcal{X}) \to F\acute{E}t(\mathcal{X}_{\Omega})$ is also essentially surjective.
\end{lemma}
\begin{proof}
    We use presentations of algebraic stacks by algebraic spaces to prove the lemma. Suppose $\pi: \mathcal{E}_{\Omega}\to \mathcal{X}_{\Omega}$ is a finite \'etale covering. We first choose a smooth presentation for \(\mathcal X_F\):
    pick a smooth surjection \(p:U_F\to\mathcal{X}_F\) with \(U_F\) a scheme of finite presentation over $F$. 
    Let 
    \[
    R_F:=U_F\times_{\mathcal{X}_F}U_F\overset{\mathrm{pr}_1}{\underset{\mathrm{pr}_2}\rightrightarrows} U_F
    \]
    be the associated relation algebraic space with projection maps $\mathrm{pr}_1,\mathrm{pr}_2$. 
    
    \bigskip
    
    Then we will produce a cover and a relation algebraic space whose quotient will be our desired descent for $\mathcal{E}_{\Omega}$.
    Form the fiber product
    \[
    V_{\Omega} := \mathcal{E}_{\Omega}\times_{\mathcal{X}_{\Omega}} U_{\Omega}.
    \]
    Because $\pi$ is representable and finite \'etale, the map $V_{\Omega}\to U_{\Omega}$ is a finite \'etale morphism of algebraic spaces of finite presentation. By the previous Lemma \ref{lemma 6.6}, $V_{\Omega}\to U_{\Omega}$ descends to $F$: $V_F\to U_F$.
    
    For the relation algebraic space, let $R_{\Omega} := R_F\otimes_F \Omega = U_{\Omega}\times_{\mathcal{X}_{\Omega}}U_{\Omega} \rightrightarrows U_{\Omega}$ with projections $\mathrm{pr}_1,\mathrm{pr}_2:R_{\Omega}\to U_{\Omega}$.
    Pull back \(V_{\Omega}\to U_{\Omega}\) along these projections to obtain two $R_{\Omega}$-spaces
    \[
    \mathrm{pr}_1^*V_{\Omega},\qquad \mathrm{pr}_2^*V_{\Omega}.
    \]
    Over $\Omega$ these two pullbacks are canonically identified because $V$ came from pulling back $\mathcal{E}_{\Omega}$; by finite-presentation-ness the isomorphism and its cocycle condition descend to $F$. Thus there exists an isomorphism of $R_F$-spaces
    \[
    \varphi_F:\mathrm{pr}_1^*V_F\overset{\sim}{\longrightarrow}\mathrm{pr}_2^*V_F
    \]
    satisfying the cocycle condition on triple overlaps.
    
    Using $\varphi_F$ we form the groupoid
    \[
    R_{V,F}\;:=\; \mathrm{pr}_1^*V_F \;\cong_{\varphi_F}\; \mathrm{pr}_2^*V_F
    \;\rightrightarrows\; V_F,
    \]
    with source and target maps $s,t:R_{V,F}\rightrightarrows V_F$ given by the two projections to the two factors of $\mathrm{pr}_i^*V_F\cong V_F\times_{U_F}R_F$. The cocycle condition guarantees the groupoid axioms hold for $R_{V,F}$.
    
    \bigskip
    
    Now we are able to construct the descent $\mathcal{E}_{F}$. We stackify the prestack $[V_{F}/R_{V,F}]^{pre}$ over the big fppf site to obtain a stack
    \[
    \mathcal{E}_{F} \;:=\; [\,V_F / R_{V,F}\,].
    \]
    
    Because $R_{V,F}\rightrightarrows V_F$ is a smooth equivalence relation, the stack quotient is an algebraic stack.
    
    \bigskip
    
    Finally we construct the map \(\mathcal{E}_{F}\to\mathcal X_{F}\) and check its properties. By our construction, the map $V_F\to U_F$ induces a morphism of stacks
    \[
    \pi_F:\mathcal{E}_{F}=[V_F/R_{V,F}]\longrightarrow \mathcal{X}_{F}.
    \]
    It is clear that all of the objects ($V_F$, $R_{V,F}$ and the isomorphism $\varphi_F$) were chosen so that their base change to $\Omega$ recovers the original objects $V$, $R$ and the canonical identification. Therefore $\pi_F\otimes_F \Omega=\pi$, thus the morphism $\pi_F$ is representable by algebraic spaces, and since $V_F\to U_F$ is finite \'etale, $\pi_F$ is finite \'etale as well, proving the lemma.
\end{proof}
Lemma \ref{lemma 6.5}, Lemma \ref{lemma 6.7} together give the full faithfulness and essential surjectivity of the functor, which completes the proof.
\end{proof}

\begin{lemma}
    $\pi_1^{top}([S_{K,s}](\C),s)\cong \Gamma_g$, $\pi_1([S_{K,s}]_{\C},s_{\C})\cong \widehat{\Gamma_g}$ and in general, $\pi_1([S_{K,s}]_{F},s_{F})\cong \widehat{\Gamma_g}$ for any algebraically closed field $F$ with characteristic $0$.
\end{lemma}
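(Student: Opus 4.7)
The plan proceeds in three steps, matching the three claims. By Lemma 1.3 (combined with the identification $[S_{K_i}(G,X)/(K/K_i)](\C) \cong \coprod_{g\in\mathcal{C}}[\Gamma_g\backslash X^+]$ of Lemma 1.1), there is an equivalence of complex-analytic stacks $[S_{K,s}](\C)\cong [\Gamma_g\backslash X^+]$, where $X^+$ is a connected component of the Hermitian symmetric domain $X$, hence contractible. For a quotient stack by a discrete group action on a simply-connected space, the topological/orbifold fundamental group (in the sense of \cite{noohi2002fundamental}) is exactly the acting group: concretely, one presents $[\Gamma_g\backslash X^+]$ as the Borel construction $X^+\times^{\Gamma_g} E\Gamma_g$, which fibers over $B\Gamma_g$ with contractible fiber, so $\pi_1^{\mathrm{top}}\cong \pi_1(B\Gamma_g)=\Gamma_g$. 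This gives the first isomorphism.

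For the étale fundamental group over $\C$, I would invoke the stacky analogue of the Riemann existence theorem for complex algebraic stacks of finite type, also developed in \cite{noohi2002fundamental}. It identifies the étale $\pi_1$ of the algebraic stack with the profinite completion of the topological $\pi_1$ of its analytification, yielding $\pi_1([S_{K,s}]_{\C},s_{\C})\cong \widehat{\Gamma_g}$.

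For a general algebraically closed field $F$ of characteristic $0$, I would use Lemma \ref{lemma 1.4} (invariance of the étale $\pi_1$ under extension of algebraically closed base fields of characteristic $0$). Since $[S_{K,s}]$ is defined over the number field $E_{K,s}$, fix an algebraic closure $\overline{E_{K,s}}$ and choose embeddings $\overline{E_{K,s}}\hookrightarrow \C$ and $\overline{E_{K,s}}\hookrightarrow F$; the latter exists because $F$, being algebraically closed of characteristic $0$, contains a copy of $\overline{\Q}$. Applying Lemma \ref{lemma 1.4} to both extensions gives
\[
\pi_1([S_{K,s}]_{F},s_F)\;\cong\;\pi_1([S_{K,s}]_{\overline{E_{K,s}}})\;\cong\;\pi_1([S_{K,s}]_{\C},s_{\C})\;\cong\;\widehat{\Gamma_g}.
\]

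The main obstacle is the first step. Since $K$ is not assumed neat, the action of $\Gamma_g$ on $X^+$ may have torsion stabilizers, so the usual topological covering-space argument for free quotients does not apply and one genuinely needs the stacky/orbifold notion of fundamental group. The cleanest route is Noohi's Galois-category definition applied directly to $[\Gamma_g\backslash X^+]$, using that connected covers of this quotient stack correspond to pairs of a $\Gamma_g$-set and a $\Gamma_g$-equivariant cover of $X^+$; since $X^+$ is simply connected, the fiber functor ends up being the forgetful functor from $\Gamma_g\mathrm{-Set}$ to $\mathrm{Set}$, whose automorphism group is $\Gamma_g$. Once the topological identification is secured, the remaining two statements are formal consequences of the stacky Riemann existence theorem and Lemma \ref{lemma 1.4}.
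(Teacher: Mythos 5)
Your proposal is correct and follows essentially the same route as the paper: identify $[S_{K,s}](\C)$ with $[\Gamma_g\backslash X^+]$ via Lemma \ref{lemma 1.3} and use simple-connectedness of $X^+$ to get $\pi_1^{\mathrm{top}}\cong\Gamma_g$, then apply Noohi's comparison (Riemann existence for stacks) for the profinite completion over $\C$, and finally Lemma \ref{lemma 1.4} to pass to an arbitrary algebraically closed field of characteristic $0$. The only difference is cosmetic: you spell out the Borel-construction/Galois-category argument for the first claim and the chain of embeddings $\overline{E_{K,s}}\hookrightarrow\C$, $\overline{E_{K,s}}\hookrightarrow F$, whereas the paper instead spends its effort verifying the hypotheses of Lemma \ref{lemma 1.4} (normal, connected, Noetherian, quasi-compact diagonal via Lemma \ref{lemma 2.1}), a check you should also record.
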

\begin{proof}
By Lemma \ref{lemma 6.3}, $[S_{K,s}](\C)=\Gamma_g\backslash X^{+}$. Since $\Gamma_g$ acts properly on the simply-connected manifold $X^{+}$, the Deck transformation group of the orbifold $[\Gamma_g \backslash X^{+}]$ is $\Gamma_g$, which is by definition the fundamental group of $[\Gamma_g \backslash X^{+}]$.

The second claim follows from \cite[Corollary 20.5]{noohi2005foundationstopologicalstacksi}. 

To prove the third claim, note that $[S_{K,s}]\cong [S_{K_0,s}/H]$ and that $S_{K_0,s}$ is normal, connected. So $[S_{K,s}]$ is connected and normal as well. Since $H$ is finite and that $S_{K_0,s}$ is Noetherian and separated, by Lemma \ref{lemma 6.8}, the diagonal of $[S_{K,s}]$ is also quasi-compact and quasi-separated. Thus, $[S_{K,s}]$ is a normal connected Noetherian DM stack, and the third claim follows from Lemma \ref{lemma 6.4}.
\end{proof}
\begin{definition}\cite[Definition 2.2]{klevdal2023compatibility}
    Let $G$ be an adjoint reductive group over $\mathbb{Q}$, and let $\Gamma$ be a discrete group. $\Gamma$ is said to be $G^{\mathrm{ad}}$-superrigid if for any algebraically closed field $\Omega$ of characteristic $0$ and any two homomorphisms $\rho_1,\rho_2:\Gamma\to G^{\mathrm{ad}}(\Omega)$ with Zariski-dense image, there exists an automorphism $\tau\in \mathrm{Aut}(G^{\mathrm{ad}}_{\Omega})$ such that $\rho_1=\tau(\rho_2)$.
\end{definition}

Now assume moreover that $G^{\mathrm{ad}}$ is $\Q$-simple and has real rank $\mathrm{rk}_{\mathbb{R}}(G^{\mathrm{ad}}_{\mathbb{R}})$ at least $2$. Since $(G,X)$ is a Shimura datum. \cite[2.3.4(a)]{Deligne_Shimura} shows that there exists a totally real field $F$ and an absolutely simple adjoint group $G^s$ over $F$ such that $G^{\mathrm{ad}}\cong \mathrm{Res}_{F/\Q}(G^s)$.
\begin{lemma}
    $\Gamma_g$ is superrigid.
\end{lemma}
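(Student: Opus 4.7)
The plan is to deduce this from Margulis's superrigidity theorem. First I observe that $\Gamma_g = gKg^{-1}\cap G(\Q)_+$ is a congruence subgroup of $G(\Q)$, so its image $\overline{\Gamma}_g$ in $G^{\mathrm{ad}}(\R)$ is an arithmetic lattice. Using the decomposition $G^{\mathrm{ad}} \cong \mathrm{Res}_{F/\Q} G^s$ recorded above and the hypothesis that $G^{\mathrm{ad}}$ is $\Q$-simple with $\mathrm{rk}_{\R}(G^{\mathrm{ad}}_{\R}) \geq 2$, the lattice $\overline{\Gamma}_g$ is an irreducible lattice of real rank at least $2$ in the semisimple real Lie group $G^{\mathrm{ad}}(\R)$.

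Next I invoke Margulis superrigidity in the form used by Klevdal--Patrikis (cf.\ \cite[Proposition 3.4]{klevdal2023compatibility}): for an irreducible higher-rank arithmetic lattice $\Gamma$ in $H(\R)$, with $H$ a connected semisimple adjoint $\R$-group, for any algebraically closed field $\Omega$ of characteristic $0$, and for any homomorphism $\rho : \Gamma \to H(\Omega)$ with Zariski-dense image, there is a unique algebraic homomorphism $\widetilde{\rho}: H_{\Omega} \to H_{\Omega}$ whose restriction to $\Gamma$ agrees with $\rho$. This is deduced from Margulis's archimedean superrigidity by a specialization argument: the image of $\rho$ is defined over a finitely generated subfield $F_0 \subset \Omega$ which embeds into $\C$, so Margulis's original statement produces the algebraic extension over $\C$, and this extension then descends to $\Omega$.

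Applying the above to $\rho_1, \rho_2 : \Gamma_g \to G^{\mathrm{ad}}(\Omega)$ (identified with their factorizations through $\overline{\Gamma}_g$) yields algebraic extensions $\widetilde{\rho}_i : G^{\mathrm{ad}}_{\Omega} \to G^{\mathrm{ad}}_{\Omega}$ for $i = 1, 2$. Each $\widetilde{\rho}_i$ is surjective because its image is Zariski-dense and Zariski-closed; by dimension count its kernel is finite, hence central (since $G^{\mathrm{ad}}_{\Omega}$ is connected), hence trivial (since $G^{\mathrm{ad}}_{\Omega}$ is adjoint). Thus each $\widetilde{\rho}_i$ is an automorphism of $G^{\mathrm{ad}}_{\Omega}$, and $\tau := \widetilde{\rho}_1 \circ \widetilde{\rho}_2^{-1}$ satisfies $\tau \circ \rho_2 = \rho_1$ on $\Gamma_g$, establishing $G^{\mathrm{ad}}$-superrigidity.

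The main obstacle is invoking superrigidity over an arbitrary algebraically closed field $\Omega$ of characteristic $0$ rather than $\R$ or $\C$: the specialization argument must be executed with care to ensure that the Zariski density of the image and the cocycle data defining the algebraic extension descend correctly from $\C$ back to $\Omega$. A subsidiary point is verifying that $\overline{\Gamma}_g$ is genuinely an irreducible lattice in $G^{\mathrm{ad}}(\R)$, which uses that $G^{\mathrm{ad}} = \mathrm{Res}_{F/\Q} G^s$ has no $\Q$-rational product decomposition (by $\Q$-simplicity) even though $G^{\mathrm{ad}}_{\R}$ is typically a product of simple real factors permuted by $\mathrm{Gal}(\overline{F}/\Q)$.
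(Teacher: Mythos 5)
Your proposal is correct and is essentially the intended argument: the paper's own ``proof'' consists only of the assertion that the claim is purely group-theoretical, and the route you take --- $\Gamma_g$ is an irreducible higher-rank arithmetic lattice in $G^{\mathrm{ad}}(\R)$ (using $\Q$-simplicity, $\mathrm{rk}_{\R}(G^{\mathrm{ad}}_{\R})\geq 2$, and $G^{\mathrm{ad}}\cong\mathrm{Res}_{F/\Q}G^s$), then Margulis superrigidity in the form over arbitrary algebraically closed fields of characteristic $0$ as in \cite{klevdal2023compatibility}, then the observation that a Zariski-dense algebraic endomorphism of an adjoint group is an automorphism --- is exactly the argument the paper implicitly defers to, so you in fact supply more detail than the source. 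The one point that genuinely needs the care you flag is that $G^{\mathrm{ad}}_{\Omega}\cong\prod_{\sigma:F\to\Omega}G^s_{\sigma}$ is not absolutely simple, so the extension statement must be applied factorwise (the possibly bounded, Galois-twisted projections are themselves restrictions of algebraic morphisms from the restriction of scalars), which is precisely what the cited form of superrigidity accommodates.
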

\begin{proof}
    The group $\Gamma_g$ is an arithmetic subgroup of $G^{\mathrm{ad}}(\mathbb{Q})$ with $\operatorname{rk}_{\mathbb{R}}(G^{\mathrm{ad}}_{\mathbb{R}}) \ge 2$. Hence superrigidity follows from Margulis’s superrigidity theorem \cite[Theorem 2VIII.3.4(a)]{Margulis_discrete_subgroup}. The proof of \cite[Proposition~3.3]{klevdal2023compatibility} carries over as neatness of the compact open subgroup $K$ is not used there. 
    \end{proof}
\subsubsection{Good compactification, spreading out, and tame specialization map for certain DM stacks}
\label{Sec 6.1.2}

\paragraph{The good compactification of Shimura stacks}

\begin{lemma}
\label{lemma 6.8}
    Let $X$ be a scheme over another base scheme $S$ and let $H$ be a finite group scheme over $S$. For DM stacks of the form $[X/H]$, the diagonals are representable by schemes.
\end{lemma}
\begin{proof}
    By the universal property of stackification, the lemma will be proved if the following square is shown to be Cartesian:
    \begin{center}
    \begin{tikzcd}
    H\times_S X\arrow[r,]\arrow[d,]& X\times_S X \arrow[d,]\\
    \left[X/H\right]^{pre}\arrow[r,"\triangle"] & \left[X/H\right]^{\mathrm{pre}}\times_S \left[X/H\right]^{\mathrm{pre}}
    \end{tikzcd}
    \end{center}

    Since the objects in $\left[X/H\right]^{\mathrm{pre}}\times_S \left[X/H\right]^{\mathrm{pre}}$ are $(x,y)$ and that morphisms between $(x_1,y_1)$ and $(x_2,y_2)$ are $\{(h_1,h_2)\mid h_1,h_2\in H\}$, $\left[X/H\right]^{\mathrm{pre}}\times_S \left[X/H\right]^{\mathrm{pre}}\cong [X\times_S X/H\times_S H]^{pre}$, where $H\times_S H$ acts on $X\times_S X$ component-wise.

    Another computation shows that the objects of $[X/H]^{pre}\times_{(\left[X/H\right]^{\mathrm{pre}}\times_S \left[X/H\right]^{\mathrm{pre}})}(X\times_S X)$ are $(x_0,(x_1,x_2),(h_1,h_2))$ such that $h_1x_0=x_1$ and $h_2x_0=x_2$, where $x_0$ is an object in $[X/H]^{pre}$ and $(x_1,x_2)$ is an object in $X\times_S X$. There is no morphisms between $(x_0,(x_1,x_2),(h_1,h_2))$ and $(x_0',(x_1',x_2'),(h_1',h_2'))$ if $x_1\ne x_1'$ or $x_2\ne x_2'$ or $h_1'^{-1}h_1\ne h_2'^{-1}h_2$. Otherwise, the morphism is $h=h_1'^{-1}h_1=h_2'^{-1}h_2$.

    There is a map 
    \begin{align*}
        f:H\times_S X\to [X/H]^{pre}\times_{(\left[X/H\right]^{\mathrm{pre}}\times_S \left[X/H\right]^{\mathrm{pre}})}(X\times_S X).
    \end{align*}
    On the level of objects, $f$ maps $(h,x)$ to $(x,(x,hx),\mathrm{id},h)$. On the level of morphism, $f$ maps the identity to the identity.

    The full faithfulness of $f$ is clear by the description of the morphisms in $[X/H]^{pre}\times_{(\left[X/H\right]^{\mathrm{pre}}\times_S \left[X/H\right]^{\mathrm{pre}})}(X\times_S X)$. To prove that $f$ is essentially surjective, notice that any $(x_0,(x_1,x_2),(h_1,h_2))$ is equivalent to $(x_1,(x_1,x_2),(\mathrm{id},h_2h_1^{-1}))$. Therefore, $f$ is an isomorphism of prestacks and this proves the lemma.
    \end{proof}

To talk about good compactifications, we will follow the definition of normal crossings divisors (NCD in the following) as in \cite[Defintion 0BI9]{stacks-project}. Let $X$ be a smooth separated scheme of finite type over a field $k$ of characteristic $0$, and let $H$ be a finite group scheme over $k$ acting on $X$. $H$ is automatically flat and \'etale.

By Nagata compactification and Hironaka's resolution of singularities, $X$ has a good compactification, i.e., there exists a quasi-compact open embedding of $X$ into smooth proper variety $j:X\hookrightarrow\overline{X}$, such that $\overline{X}-X$ is an SNCD. Define $\overline{X}^{H}=\prod_{h\in H}X$, and $H$ acts on $\overline{X}^{H}$ by $h_0.(x_h)=x_{h_0^{-1}h}$ and define a map $f:X\to \overline{X}^{H}, x\to (j(h^{-1}x))_h$. The schematic image of $X$ in $\overline{X}^{H}$ gives an equivariant compactification of $X$.

Hironaka's idea on making the Nagata's compactification good is to use successive blow-ups to resolve the singularities, \cite[theorem 4.27]{reso_of_sing}. In the equivariant setting, denote by $\overline{X}$ the $H$-equivariant compactification of $X$. Then by successive blow-ups, I may assume that $\overline{X}$ is a good compactification of $X$. By \cite[proposition 3.9.1,theorem 3.36]{reso_of_sing}, group action on $X$ can be extended to its good compactification $\overline{X}$.

In order to define good compactification for DM stacks of the form, we make the following definition first.
\begin{definition}
\label{definition 6.12}
Let $S$ be a regular integral Noetherian scheme and let $\mathcal{Y}$ be a DM stack over $S$. A relatively normal crossings divisor (NCD) in $\mathcal{Y}$ is a closed substack $\mathcal{D}$ such that there exists an \'etale presentation $U\to \mathcal{Y}$ along which the pullback of $\mathcal{D}$ is a relatively strict normal crossings divisor in $Y$.
\end{definition}
Then we can define good compactification for stacks of the form $[X/H]$.

\begin{remark}
    A normal crossings divisor which is a strict normal crossings divisor after pulling back to a finite \'etale cover may not be a strict normal crossing divisor itself, as a nodal curve can have a finite \'etale cover $Y$ such that $Y$ is a union of two irreducible components which intersect transversely with each other.
\end{remark}

\begin{definition}
    A good compactification of DM stacks of the form $\mathcal{X}:=[X/H]$ over $k$ is a quasi-compact open immersion $\mathcal{X}\to \mathcal{Y}$ such that $\mathcal{Y}$ is smooth proper and that $\mathcal{Y}\to \mathcal{X}$ is an NCD as in Definition \ref{definition 6.12}.
\end{definition}

\begin{proposition}
    Let $X,H$ be the same as above, and let $\overline{X}$ denote the $H$-equivariant good compactification of $X$. Then $[X/H]\hookrightarrow[\overline{X}/H]$ is a good compactification.
\end{proposition}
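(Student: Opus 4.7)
The plan is to verify directly the three requirements in the definition of a good compactification of DM stacks $[X/H]$. Since $\overline{X}$ has been constructed as an $H$-equivariant good compactification of $X$, the inclusion $X\hookrightarrow\overline{X}$ is an $H$-equivariant quasi-compact open immersion. Both $X\to[X/H]$ and $\overline{X}\to[\overline{X}/H]$ are faithfully flat atlases, and the square
\[
\begin{tikzcd}
X \arrow[r,hook] \arrow[d] & \overline{X} \arrow[d] \\
{[X/H]} \arrow[r] & {[\overline{X}/H]}
\end{tikzcd}
\]
is Cartesian (this is a standard property of quotient stacks for equivariant morphisms). Since open immersions descend along faithfully flat maps, $[X/H]\hookrightarrow[\overline{X}/H]$ is a quasi-compact open immersion.

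Next I would verify that $[\overline{X}/H]$ is smooth and proper over $k$. Because $k$ has characteristic $0$, the finite group scheme $H$ is \'etale over $k$, so the atlas $\overline{X}\to[\overline{X}/H]$ is an \'etale surjection (it is an $H$-torsor, and torsors under an \'etale group scheme are \'etale). Smoothness of $\overline{X}$ therefore descends to smoothness of $[\overline{X}/H]$. For properness, the same atlas is finite, flat and surjective, and $\overline{X}$ is proper; one concludes properness of $[\overline{X}/H]$ either by descent along this finite flat surjection or, equivalently, by observing that the coarse moduli space $\overline{X}/H$ exists and is proper while $[\overline{X}/H]\to \overline{X}/H$ is proper (in fact finite).

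For the NCD condition I would argue as follows. There is a canonical identification of closed substacks
\[
[\overline{X}/H]\setminus[X/H] \;\cong\; [(\overline{X}\setminus X)/H],
\]
obtained by descending the $H$-equivariant closed immersion $\overline{X}\setminus X\hookrightarrow\overline{X}$. Taking the atlas $\overline{X}\to[\overline{X}/H]$ as the \'etale presentation required by Definition \ref{definition 2.2}, the pullback of $[(\overline{X}\setminus X)/H]$ is precisely $\overline{X}\setminus X$, which by the construction of $\overline{X}$ as an $H$-equivariant good compactification is a relatively strict normal crossings divisor in $\overline{X}$. This exhibits $[\overline{X}/H]\setminus[X/H]$ as a relative NCD on $[\overline{X}/H]$ and completes the verification.

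The main obstacle in this argument is not any of the three verifications above, each of which is formal once the equivariant compactification exists; rather it is the construction of $\overline{X}$ itself as a \emph{strict} NCD compactification carrying a compatible $H$-action. That step, which uses the schematic image into $\prod_{h\in H}\overline{X}_0$ followed by $H$-equivariant resolution of singularities, has already been carried out in the paragraph preceding the proposition, and here one only needs to invoke it. I would therefore make this citation explicit and then present the three verifications above as the body of the proof.
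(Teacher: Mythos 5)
Your proposal is correct and takes essentially the same route as the paper: both work with the atlas $\overline{X}\to[\overline{X}/H]$, show the relevant squares are Cartesian (the paper verifies this by an explicit prestack/groupoid computation where you cite it as a standard property of equivariant morphisms), descend smoothness and the open immersion, and check the NCD condition by pulling the boundary $[(\overline{X}\setminus X)/H]$ back to $\overline{X}\setminus X$. The one point to tighten is properness: ``descent along the finite flat atlas'' is not literally a descent statement (properness is local on the target, not the source), so one needs either your coarse-space alternative or, as the paper does, a direct check of separatedness via Lemma \ref{lemma 2.1} and a cancellation argument, together with finite type and universal closedness inherited from $\overline{X}$.
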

\begin{proof}
Denote $[X/H]$ by $\mathcal{X}$ and $[\overline{X}/H]$ by $\overline{\mathcal{X}}$. 

To prove that $\mathcal{X}\hookrightarrow\overline{\mathcal{X}}$ is a quasi-compact open immersion, suffices to prove that the following is a Cartesian square:
\begin{equation}
\label{diagram 6}
    \begin{tikzcd}
    X\arrow[r,]\arrow[d,]& \overline{X} \arrow[d,]\\
    \mathcal{X}\arrow[hookrightarrow]{r}{i} &\overline{\mathcal{X}}
    \end{tikzcd}.
\end{equation}

First we study the fiber product of the corresponding prestacks. Still denote by $i$ the morphism $[X/H]^{pre}\to [\overline{X}/H]^{pre}$. The object in $\mathcal{X}\times_{\overline{\mathcal{X}}}X$ is $(x_0,x_1,h)$, where $x_0$ is an object in $[X/H]^{pre}$, $x_1$ is an object in $\overline{X}$ and $h.i(x_0)=x_1$. There is no morphism between $(x_0,x_1,h)$ and $(x_0',x_1',h')$ if $x_1\ne x_1'$. Otherwise, the morphism is $(h'^{-1}h,\mathrm{id})$. Now, the morphism $\iota: X\to \mathcal{X}\times_{\overline{\mathcal{X}}}\overline{X}$, which on the level of objects sends $x$ to $(x,x,id)$, is an isomorphism. Indeed, the fully faithfulness of $\iota$ is clear, while the essential surjectivity relies on the fact that $X\hookrightarrow\overline{X}$ is $H$-equivariant. This proves that the above diagram (\ref{diagram 6}) is Cartesian and thereby proves that $\mathcal{X}\hookrightarrow\overline{\mathcal{X}}$ is a quasi-compact open immersion. 

Clearly $\overline{\mathcal{X}}$ is smooth, since $\overline{X}$ is smooth and that $\overline{X}\to \overline{\mathcal{X}}$ is an \'etale presentation. To prove that $\overline{\mathcal{X}}$ is proper, first we show that it is separated, i.e., the diagonal morphism $\overline{\mathcal{X}}\to \overline{\mathcal{X}}\times_{k}\overline{\mathcal{X}}$ is proper. By Lemma \ref{lemma 6.8}, the diagonal morphism is \'etale locally the morphism $H\times_k \overline{X}\to \overline{X}\times_k \overline{X},(h,x)\to (x,hx)$. Since $H$ and $\overline{X}$ are both proper, the source and target are all proper. Since proper maps satisfies cancellation laws, the morphism $H\times_k \overline{X}\to \overline{X}\times_k \overline{X}$ is also proper. Since being proper is \'etale local on the target and preserved by arbitrary base change, the diagonal morphism is also proper. Thus, $\overline{\mathcal{X}}$ is separated over $k$. 

Since $\overline{X}\to \Spec{k}$ is locally of finite type and being locally of finite type is \'etale local in the \'etale topology, $\overline{\mathcal{X}}\to \Spec{k}$ is locally of finite type over $k$. Since $\overline{X}\to \Spec{k}$ is quasi-compact, $\overline{\mathcal{X}}\to \Spec{k}$ is also quasi-compact. Thus, $\overline{\mathcal{X}}\to \Spec{k}$ is of finite type.

Finally, since $\overline{X}\to \Spec{k}$ is universally closed, and that $\overline{X}\to \overline{\mathcal{X}}$ is surjective, $\overline{\mathcal{X}}\to \Spec{k}$ is universally closed. Therefore, $\overline{\mathcal{X}}$ is proper.

Denote by $\mathcal{D}$ the quotient stack $[D/H]$. It is left to prove that $\mathcal{D}=\overline{\mathcal{X}}-\mathcal{X}$ and that $\mathcal{D}$ is an NCD. The first claim is clear, as $X\to \overline{X}$ is an $H$-equivariant good compactification.

By a similar computation as before, the diagram
\begin{center}
    \begin{tikzcd}
    D\arrow[r,]\arrow[d,]& \overline{X} \arrow[d,]\\
    \mathcal{D}\arrow[hookrightarrow]{r}{} &\overline{\mathcal{X}}
    \end{tikzcd}
\end{center}
is also Cartesian. Therefore, by the definition of NCD in a DM stack, $\mathcal{D}$ is an NCD in $\overline{\mathcal{X}}$.
\end{proof}    

Having settled the good compactification, we can turn to the spreading out argument.

\paragraph{Spreading out quotient DM stacks}
\label{Sec 6.1.2.2}

Suppose that $k$ is a number field, and denote by $O_k$ its ring of integers. Pick a geometric point $\overline{x}$ of $X$. By \cite[theorem 4.2.1(i),theorem 4.2.1(ii)]{Spread}, we obtain an integral model $(\mathfrak{X},\overline{\mathfrak{X}},\mathfrak{D},\overline{\mathfrak{x}})$ over $\mathcal{O}_{k}[1/N]$ of $(X,\overline{X},D,\overline{x})$. We spread out the $H$ action first. We regard $H$ as a finite constant group scheme and the $H$ action can be regarded as a morphism $H\times_k \overline{X}\to \overline{X}$, with axioms regarded as the equality of certain morphisms. By \cite[theorem 4.2.1(i),theorem 4.2.1(iii)]{Spread}, these can all be spread to $\mathcal{O}_{k}[1/N]$, with a possibly larger $N$.

Moreover, by \cite[theorem 4.2.1(ii)]{Spread}, we are able to arrange it so that 
$\mathfrak{X}$ to be smooth, separated, geometrically connected and of finite type over $\mathcal{O}_k$ and that $\overline{\mathfrak{X}}$ also possess these properties and is in addition projective. Denote by $\overline{y}$ the image of $\overline{x}$ in $\mathcal{X}$ by the canonical \'etale presentation.

Denote by $\overline{y}$ the image of $\overline{x}$ in $\mathcal{X}$ by the canonical \'etale presentation. Then, the integral model $(\mathfrak{X},\overline{\mathfrak{X}},\mathfrak{D},\overline{\mathfrak{x}})$ is stable under the $H$ action, and the resulting pair of DM stacks $(\mathbb{X}:=[\mathfrak{X}/G],\overline{\mathbb{X}}:=[\overline{\mathfrak{X}}/G],\mathbb{D}:=[\mathfrak{D}/G],\overline{\mathfrak{y}})$ will be an integral model of $(\mathcal{X},\overline{\mathcal{X}},\mathcal{D},\overline{y})$, where $\overline{\mathfrak{y}}$ is the image of $\overline{\mathfrak{x}}$ in $\mathbb{X}$.

\paragraph{Tame ramification and tame specialization maps}
The goal of this subsection is also to define tame ramification and extend specialization map to the equivariant settings. In this subsection, the base scheme $S$ is an integral, excellent and pure-dimensional scheme. $X$ is a smooth geometrically connected separated scheme of finite type over $S$, $H$ a finite flat \'etale group scheme acting on $X$. Here, for a finite \'etale cover of schemes, say $Y\to X$, with $X$ having a good compactification, it is said to be tamely ramified if the discrete valuations associated with the generic points of $\overline{X}-X$ are tamely ramified in $k(Y)\mid k(X)$, which is equivalent to the curve-tameness. See \cite[\textsection 4]{tameness} for a precise definition of curve-tameness and proof for their equivalence.

\vskip 0.5cm
\begin{definition}
\label{definition 6.16}
    Let $\mathcal{X}$ be a smooth geometrically connected separated DM stack of finite type with a good compactification, and let $\mathcal{Y}\to \mathcal{X}$ be a finite \'etale cover. Then $\mathcal{Y}\to \mathcal{X}$ is said to be tamely ramified if for all morphisms from a regular separated curve $C$ of finite type over $S$ to $\mathcal{X}$, $C\to \mathcal{X}$, $\mathcal{Y}\times_{\mathcal{X}}C\to C$ is tamely ramified as a finite \'etale cover of curves.
\end{definition}
\begin{proposition}
    When $\mathcal{X}$ is a DM stack of the form $[X/H]$ over $S$, the following holds:
    \begin{enumerate}
        \item 
        \label{assertion 1} if $\mathcal{Y}\to \mathcal{X}$ is tamely ramified, then its pullback along $X\to \mathcal{X}$ is also tamely ramified as a finite \'etale cover of schemes. 
        \item 
        \label{assertion 2}for a finite \'etale cover $\mathcal{Y}\to \mathcal{X}$, if the characteristic of $S$ does not divide $|G|$, and if the pullback of $\mathcal{Y}\to \mathcal{X}$ along $X\to \mathcal{X}$ is tamely ramified, then $\mathcal{Y}\to \mathcal{X}$ is tamely ramified.
    \end{enumerate}
\end{proposition}
\begin{remark}
Assertion \ref{assertion 1} makes sense as finite \'etale morphisms are representable by schemes, since algebraic space locally quasi-finite and separated over a scheme is a scheme (see \cite{Moduli}, the criterion that a locally quasi-finite and separated algebraic space over a scheme is a scheme).
\end{remark}
\begin{proof}
First, we prove \ref{assertion 1}. Since $X\to \mathcal{X}$ is an \'etale presentation, $\mathcal{Y}\times_{\mathcal{X}}X\to X$ is also a finite \'etale morphism of schemes. Denote $\mathcal{Y}\times_{\mathcal{X}}X$ by $Y$. By the following diagram
\begin{equation*}
    \begin{tikzcd}
    Y\times_X C\arrow[r,]\arrow[d,]& C\arrow[d,]\\
    Y\arrow[r,]\arrow[d,]& X \arrow[d,]\\
    \mathcal{Y}\arrow[r,] &\mathcal{X}
    \end{tikzcd},
\end{equation*}
$Y\times_X C\to C$ is tamely ramified. Therefore, $Y\to X$ is curve tame and by \cite[theorem 4.4]{tameness}, this implies that $Y\to X$ is tamely ramified. 

To prove (\ref{assertion 2}), suppose that $\mathcal{Y}\to \mathcal{X}$ pulls back to a tamely ramified cover of schemes along $X\to \mathcal{X}$. Let $C\to \mathcal{X}$ be a morphism from a regular separated curve of finite type over $S$ to $\mathcal{X}$. We have the diagram:
\begin{equation*}
    \begin{tikzcd}
    \widetilde{C}'\arrow[rrr,]\arrow[ddd,]\arrow[dr,]& & & C'\arrow[dl,]\arrow[ddd,]\\
    &Y\arrow[r,]\arrow[d,]& X \arrow[d,]& \\
    &\mathcal{Y}\arrow[r,] &\mathcal{X}&\\
    \widetilde{C}\arrow[rrr,]\arrow[ur,]& & & C\arrow[ul,]\\
    \end{tikzcd}
\end{equation*}
where the four trapezoids and the smaller and larger square are all Cartesian. The degree of the finite \'etale morphism $C'\to C$ is $|G|$, and since characteristic of $S$ does not divide $|G|$, $C'\to C$ is at most tamely ramified. Since $\widetilde{C}'\to C'$ is also tamely ramified, that implies $\widetilde{C}\to C$ is also tamely ramified. Since this holds for all such $C\to \mathcal{X}$, $\mathcal{Y}\to \mathcal{X}$ is tamely ramified.

\end{proof}

By the above definition, if the characteristic of $S$ is $0$, any finite \'etale cover is tamely ramified. 

For a finite group $H$ acting on a smooth geometrically connected separated scheme $X$ of finite type over a field $k$ of positive characteristic $p$ not dividing $|H|$, the above definition shows that $X\to [X/H]$ is a tamely ramified Galois cover with Galois group $H$. Indeed, since $X\times_{[X/H]}X\cong H\times X$, the pull back of $X\to [X/H]$ along $X\to [X/H]$ is $H\times X\to X$. Combined with the fact that the $H$ action extends to $\overline{X}$ and that $H$ acts by automorphism, this shows that $X\to [X/H]$ is a tamely ramified cover over $k$.
\begin{lemma}
\label{lemma 6.19}
    Let $T\to R$ be a morphism of base schemes. Let $X$ be a connected scheme over $R$ such that $X_T:X\times_R T$ is also connected. Suppose that $H$ is a constant finite group scheme acting on $X$, where both $H$ and the action are defined over $R$. Let $x$ be a geometric point of $X$ and $x'$ a geometric point of $X_T$. Let $y,y'$ be the image of $x,x'$ in $[X/H]_R,[X/H]_T$ respectively. Then there is a commutative diagram with exact rows, with the rightmost vertical arrow being an isomorphism:
    \begin{equation}
    \label{diagram 7}
    \begin{tikzcd}
    1\arrow[r,]&\pi_1(X_{T},x)\arrow[r,]\arrow[d,]& \pi_1([X/H]_{T},y) \arrow[r,]\arrow[d,]& H \arrow[r,]\arrow[d,]& 1\\
    1\arrow[r,]&\pi_1(X_R,x')\arrow[r,]&\pi_1([X/H]_R,y')\arrow[r,]& H \arrow[r,]& 1.
    \end{tikzcd}    
\end{equation}
\end{lemma}
\begin{proof}
    Clearly $X\to [X/H]$ is an \'etale presentation. Since $X\times_{[X/H]}X\cong H\times X$, and that $H$ is a finite group scheme, $X\to [X/H]$ is also finite. Therefore by the assumptions, $X\to [X/H]$ is a connected finite Galois cover of $[X/H]$ with Galois group $H$, whether the base scheme is $R$ or $T$. These give the exactness of the top and the bottom row.

    We explain how the rightmost vertical arrow comes. The base change morphism $T\to R$ induces a map from the deck transformation group of $X_R\to [X/H]_R$ to the deck transformation group of $X_{T}\to [X/H]_{T}$. Since $X_T$ is connected, the map is injective. Since $H$ is a finite constant group scheme, the map is an isomorphism. Thus, we can take its inverse and this gives the rightmost vertical arrow, which also shows that the rightmost vertical arrow is an isomorphism.
\end{proof}
\begin{lemma}
    Same notations as in Lemma \ref{lemma 6.19}. When $X\to [X/H]$ is moreover tamely ramified, the diagram with the \'etale fundamental group replaced by the tame \'etale fundamental group is also commutative, has exact rows, and has the rightmost vertical map as an isomorphism.
\end{lemma}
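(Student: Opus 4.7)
The plan is to transpose the proof of Lemma \ref{lemma 2.9} into the tame setting, using the hypothesis that $X\to[X/H]$ is tamely ramified (which, as noted in the paragraph preceding Lemma \ref{lemma 2.9}, holds as soon as the residue characteristics do not divide $|H|$). This is precisely the hypothesis needed to run the usual Galois formalism inside the tame fundamental groupoid, because it makes $X\to[X/H]$ a connected tamely ramified Galois cover with group $H$.

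First I would establish exactness of each of the two rows by the Galois formalism for the tame fundamental group. Since $X\to[X/H]$ is a connected Galois cover with group $H$ which is tamely ramified (before and after base change to $T$, since tameness is preserved under arbitrary base change by pulling back test curves as in Definition \ref{def 2.6}), it corresponds to a continuous surjection $\pi_1^t([X/H]_R,y')\twoheadrightarrow H$ (resp.\ $\pi_1^t([X/H]_T,y)\twoheadrightarrow H$), whose kernel is identified with $\pi_1^t(X_R,x')$ (resp.\ $\pi_1^t(X_T,x)$). Concretely, a connected tamely ramified cover of $[X/H]$ whose pullback to $X$ splits must factor through $X\to[X/H]$, which checked on test curves reduces to the standard fact for tamely ramified covers of regular curves.

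Second, the vertical arrows come from functoriality of the tame fundamental group under the base change $T\to R$: pullback of a tamely ramified cover along $T\to R$ is tamely ramified (again by the test-curve criterion), and this pullback is compatible with the Galois covers $X\to[X/H]$ on both levels. The left and middle squares therefore commute. The rightmost vertical arrow is literally the one from Lemma \ref{lemma 2.9}: base changing the Galois cover $X_R\to[X/H]_R$ produces the Galois cover $X_T\to[X/H]_T$, and the induced map on deck transformation groups is the identity $H=H$, hence an isomorphism.

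The main obstacle is justifying the middle exactness, i.e.\ that a connected tamely ramified cover $\mathcal{Z}\to[X/H]$ whose pullback to $X$ is trivial must be a disjoint union of copies of $X\to[X/H]$. In the étale case this is immediate from Galois categories, but in the tame case one must verify the compatibility with the tameness condition. This is done by pulling the relevant tame covers back along an arbitrary regular curve $C\to[X/H]$ (as in Definition \ref{def 2.6}), reducing to the classical statement for tamely ramified finite covers of curves, and then reassembling the global statement using the base change compatibility of tameness. Once this is in place, the rest of the argument is essentially a line-by-line repetition of the proof of Lemma \ref{lemma 2.9}.
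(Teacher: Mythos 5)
Your proposal is correct and follows essentially the same route as the paper, whose proof is literally the one-line instruction to rerun the argument of Lemma \ref{lemma 2.9} now that $X\to[X/H]$ is tame. Your write-up simply supplies the details (tame Galois formalism via the test-curve criterion of Definition \ref{def 2.6}, base-change compatibility, and identification of the deck group $H$) that the paper leaves implicit.
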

\begin{proof}
    Apply the same idea as in the proof of Lemma \ref{lemma 6.19} since now $X\to [X/H]$ is tame.
\end{proof}

Let $A$ be a strictly henselian discrete valuation ring which is also a G-ring, then $\Spec{A}$ is integral, excellent and pure dimensional. Let $\overline{\eta}$ be the geometric point over the generic point $\eta$ of $\Spec{A}$, and let $\overline{s}$ be the geometric point over the special point $s$ of $\Spec{A}$. By \cite[\textsection 2]{klevdal2023compatibility}, the main point of the tame specilization map for a smooth geometrically connected variety $X$ over $A$ has two ingredients, where $x_{\overline{\eta}}$ is a geometric point of $X_{\overline{\eta}}$ and $x_{\overline{s}}$ is a geometric point of $X_{\overline{s}}$:
\begin{enumerate}
    \item[(i)] the map $\pi_1^t(X_{\overline{\eta}},x_{\overline{\eta}})\to \pi_1^t(X,x_{\overline{\eta}})$ induced by the canonical inclusion $X_{\eta}\to X$ is surjective; 
    \item[(ii)] the map $\pi_1^t(X_{\overline{s}},x_{\overline{s}})\to \pi_1^t(X,x_{\overline{s}})$ induced by the canonical inclusion $X_{\overline{s}}\to X$ is an isomorphism; 
\end{enumerate}

We wish to establish similar claims for DM stacks $[X/H]$ defined over ring $A$ and its generic and special fibers.

Let $y_{\overline{\eta}}$ be the image of $x_{\overline{\eta}}$ by $X_{\overline{\eta}}\to [X/H]_{\overline{\eta}}$, then $y_{\overline{\eta}}$ is also a geometric point of $[X/H]$.
Since the pull back of $[X/H]_{\eta}\to [X/H]$ along $X\to [X/H]$ is $X_{\eta}$, there is a diagram consisting of two exact rows:
\begin{center}
    \begin{tikzcd}
    1\arrow[r,]&\pi_1^t(X_{\overline{\eta}},x_{\overline{\eta}})\arrow[r,]\arrow[d,]& \pi_1^t([X/H]_{\overline{\eta}},y_{\overline{\eta}}) \arrow[r,]\arrow[d,]& H \arrow[r,]\arrow[d,]& 1\\
    1\arrow[r,]&\pi_1^t(X,x_{\overline{\eta}})\arrow[hookrightarrow]{r}{} &\pi_1^t([X/H],y_{\overline{\eta}})\arrow[r,]& H \arrow[r,]& 1.
    \end{tikzcd}
\end{center}

By the first part of properties of tame specialization map in the scheme case, the leftmost vertical arrow is a surjection. Since the rightmost arrow is an isomorphism, by snake lemma, the middle map is a surjection.

Similarly, for the special fiber, let $y_{\overline{s}}$ be the image of $x_{\overline{s}}$ by $X_{\overline{s}}\to [X/H]_{\overline{s}}$, then $y_{\overline{s}}$ is also a geometric point of $[X/H]$. Then there is a diagram consisting of two exact rows:

\begin{center}

    \begin{tikzcd}
    1\arrow[r,]&\pi_1^t(X_{\overline{s}},x_{\overline{s}})\arrow[r,]\arrow[d,]& \pi_1^t([X/H]_{\overline{s}},y_{\overline{s}}) \arrow[r,]\arrow[d,]& H \arrow[r,]\arrow[d,]& 1\\
    1\arrow[r,]&\pi_1^t(X,x_{\overline{s}})\arrow[hookrightarrow]{r}{} &\pi_1^t([X/H],y_{\overline{s}})\arrow[r,]& H \arrow[r,]& 1.
    \end{tikzcd}
\end{center}

The rightmost vertical arrow is defined similarly as before, where it is the inverse of the specialization morphism from the deck transformation group of $X\to [X/H]$ to the deck transformation group of $X_{\overline{s}}\to [X/H]_{\overline{s}}$.

The middle arrow is an isomorphism since both the leftmost one and the rightmost one are isomorphisms.

In conclusion, there is a set of similar statements in the DM stack $[X/H]$ case:
\begin{enumerate}
    \item[(i)] the map $\pi_1^t([X/H]_{\overline{\eta}},y_{\overline{\eta}})\to \pi_1^t([X/H],y_{\overline{\eta}})$ induced by the canonical inclusion $[X/H]_{\eta}\to [X/H]$ is surjective; 
    \item[(ii)] the map $\pi_1^t([X/H]_{\overline{s}},y_{\overline{s}})\to \pi_1^t([X/H],y_{\overline{s}})$ induced by the canonical inclusion $[X/H]_{\overline{s}}\to [X/H]$ is an isomorphism; 
\end{enumerate}

\subsubsection{Spreading out the canonical $l$-adic local system to the integral model}
Back to the Shimura stack setting, we utilize Section \ref{Sec 6.1.2} to derive some results here. As in Section \ref{Sec 6.1.1}, let $(G,X)$ be a Shimura datum, $K$ a compact open subgroup of $G(\mathbb{A}_f)$, not necessarily neat and let $K_0$ be a neat subgroup of $K$ of finite index. Let $S_{K_0,s}$ be a geometrically connected component of $\mathrm{Sh}_{K_0}(G,X)$, then $K/K_0$ acts on the geometrically connected components of $\mathrm{Sh}_{K_0}(G,X)$. Denote by $H$ the stabilizer of $S_{K_0,s}$ in $K/K_0$. As in Section \ref{Sec 6.1.1}, the Shimura stack $[S_{K,s}]$ is defined as $[S_{K_0,s}/H]$. Let $F$ be a fixed number field, $G^{\mathrm{ad}}$ an adjoint semisimple algebraic group, $\Gamma$ a discrete $G^{\mathrm{ad}}$-superrigid group and let $\lambda$ be finite places of $\overline{\Q}$. 

Let $s$ be a point in $[S_{K,s}]$ which is the image of some point $s_1$ in $S_{K_0,s}$, and let $\overline{s}$ be a geometric point lying over $s$. By Section \ref{Sec 6.1.2.2}, the integral model $\mathbb{S}_{K,s}$ of $[S_{K,s}]$ is constructed by choosing an integral model $\mathcal{S}_{K_0,s}$ of $S_{K_0,s}$ which is smooth, geometrically connected, separated, and of finite type, extending $H$-action onto $\mathcal{S}_{K_0,s}$, and then define $\mathbb{S}_{K,s}$ to be the stack quotient of $\mathcal{S}_{K_0,s}$ by $H$.
\begin{proposition}
\label{proposition 6.21}
    Let $\rho$ be the canonical $l$-adic local system on $[S_{K,s}]$, constructed as in Section \ref{Sec 6.1.1}, i.e., the adjoint representation:
    \begin{align*}
    \rho_{K,s,\lambda}:\pi_1([S_{K,s}],\overline{s})\to K\hookrightarrow G(\mathbb{A}_f)\twoheadrightarrow G^{\mathrm{ad}}(\Q_\ell)\to G^{\mathrm{ad}}(\overline{\Q_{\lambda}}).
    \end{align*}for $\lambda$ lying over $l$.
    Then it extends to a family of representations (still denoted by the same notation):
    \begin{align*}
\rho_{\lambda}:\pi_1(\mathbb{S}_{K,s,\mathcal{O}_F[1/Nl]},\overline{s})\to G^{\mathrm{ad}}(\overline{\Q_{\lambda}}),
\end{align*} where $\mathbb{S}_{K,s}$ is a model for $[S_{K,s}]$ over $\mathcal{O}_{E_{K,s}}$.
\end{proposition}
\begin{proof}
    First, by the construction of the canonical $l$-adic local system on $[S_{K,s}]$ and $S_{K_0,s}$, it is clear that the restriction of $\rho_{K,s,\lambda}$ to $\pi_1(S_{K_0,s})$ is the same as the canonical $l$-adic local system constructed in \cite[\textsection 3.1]{klevdal2023compatibility}. Denote the restricted representation by $\rho^0_{K_0,s,\lambda}$. 

    It is proved in \cite[proposition 2.3]{klevdal2023compatibility} that $\rho^0_{K_0,s,\lambda}$ descent to a family of representations, which we still denote by $\rho^0_{K_0,s,\lambda}$:
\begin{align*}
\rho^0_{K_0,s,\lambda}:\pi_1(\mathcal{S}_{K_0,s,\mathcal{O}_{E_{K_0,s}}[1/Nl]},\overline{s})\to G^{\mathrm{ad}}(\Q_{\lambda}),
\end{align*}
where $N$ is independent of $\lambda$.

In the spreading out process, we require $\mathcal{S}_{K_0,s}$ to be connected, so we can apply Lemma \ref{lemma 6.19} to conclude that the kernel of $\pi_1(S_{K_0,s},\overline{s_1})\to \pi_1(\mathcal{S}_{K_0,s},\overline{s_1})$ is the same as that of $\pi_1([S_{K,s}],\overline{s}) \to\pi_1(\mathbb{S}_{K,s},\overline{s})$. Since $\rho^0_{K_0,s,\lambda}$ factors through the leftmost vertical map in Diagram \ref{diagram 7} (with $X=\mathcal{S}_{K_0,s}$, $T=E_{K_0,s}$, $R={O}_{E_{K_0,s}}[1/Nl]$), $\rho_{K,s,\lambda}|_{\pi_1([S_{K,s}]_{E_{K_0,s}},\overline{s})}$ factors through the middle map in the same diagram. Finally, we notice that $E_{K_0,s}/E_{K,s}$ is a finite extension of fields, where only finitely many primes ramify. So if we enlarge $N$ so that it includes those ramified primes as well, then $\rho_{K,s,\lambda}$ extends to a local system on $\mathbb{S}_{K,s}$ defined over ${O}_{E_{K,s}}[1/Nl]$. Since all $\rho^0_{K_0,s,\lambda}$ have such an extension, so do all $\rho_{K,s,\lambda}$'s.

\end{proof}

\begin{definition}
    Let $X$ be a DM (resp. an algebraic stack) of finite type over $\mathcal{O}_F$ for some number field $F$ and $x\in X$ a closed point. Let $G$ be a reductive algebraic group over $\Q$ and $\{\rho_{\lambda,\iota_\lambda}\}_{\lambda,\iota_\lambda}$ be family of representations $\rho_{\lambda,\iota_\lambda}:\pi_1(X,\overline{x})\to G(\overline{\Q_\ell})$ for various $\lambda$ and $F$-embeddings $\iota_\lambda:\overline{\Q}\to \overline{\Q}_{\lambda}$. Then $\{\rho_{\lambda,\iota_\lambda}\}_{\lambda,\iota_\lambda}$ is said to be compatible if there exists an $N$ and that for all closed point $x$ with $\mathrm{char}(x)\nmid N$, there exists a number field $E$ independent of $x$ with a fixed embedding $\iota: E\to \overline{\Q}$ and a $\gamma_x\in [G//G](E)$, such that for all $\lambda\nmid \mathrm{char}(x)$, $\gamma_x=[\rho_{\lambda,\iota_\lambda}(\mathrm{Frob}_x)]$ in $[G//G](\overline{\Q}_{\lambda})$, where $\gamma_x$ is identified with its image in $[G//G](\overline{\Q}_{\lambda})$ by the restriction of $\iota_{\lambda}$ to $E$. 
\end{definition}
Let $N'=N\cdot |H|$, with $N$ in Proposition \ref{proposition 6.21}. For two distinct places $\lambda,\lambda'$, let $\ell,\ell'$ be their residual characteristics respectively. Let $v$ be a prime in $\mathcal{O}_F[1/N'\ell\ell']$. Denote by $\mathbb{S}_{K,s,v}$ the $v$-fiber of $\mathbb{S}_{K,s}$ and by $\rho_{K,s,\lambda,v}$ the pullback of $\rho_{K,s,\lambda}$ to $\pi_1(\mathbb{S}_{K,s,v})$, via the canonical morphism $\pi_1(\mathbb{S}_{K,s,v})\to \pi_1(\mathbb{S}_{K,s})$.
\begin{proposition}
    Let $\lambda,\lambda',v$ be as above. There exists a $\tau\in \mathrm{Aut}(G_{\overline{\Q}_{\lambda'}}^{\mathrm{ad}})$ such that such that $\tau(\rho_{K,s,\lambda',v})$ is compatible with $\rho_{K,s,\lambda,v}$. This $\tau$ may be dependent on $\lambda,\lambda'$ and $v$.
\end{proposition}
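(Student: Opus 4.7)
The plan is to apply Theorem~\ref{theorem 4.1} to the $G^{\mathrm{ad}}$-valued representation $\rho_{K,s,\lambda,v}$ on the smooth algebraic stack $\mathbb{S}_{K,s,v}$, producing a companion $\rho'\colon\pi_1(\mathbb{S}_{K,s,v})\to G^{\mathrm{ad}}(\overline{\Q_{\lambda'}})$, and then to match $\rho'$ with $\tau\circ\rho_{K,s,\lambda',v}$ for some $\tau\in\mathrm{Aut}(G^{\mathrm{ad}}_{\overline{\Q_{\lambda'}}})$ by invoking the $G^{\mathrm{ad}}$-superrigidity of $\Gamma_g$. First I verify the hypotheses of Theorem~\ref{theorem 4.1}. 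Since $G^{\mathrm{ad}}$ is connected semisimple, $G^{\mathrm{ad}}=[G^{\mathrm{ad}},G^{\mathrm{ad}}]$, so $\rho_{K,s,\lambda,v}^{\mathrm{ab}}$ is trivial and its Frobenius eigenvalues (all equal to $1$) are trivially plain of characteristic~$p$. The image of $\rho_{K,s,\lambda,v}$ contains the image of $\Gamma_g$, which by the Borel density theorem (using $\mathrm{rk}_{\mathbb{R}}\geq 2$ on each $\mathbb{Q}$-simple factor and SV5) is Zariski-dense in $G^{\mathrm{ad}}$, so the representation does not factor through any proper parabolic and $G^{\mathrm{ad}}$-irreducibility holds. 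Theorem~\ref{theorem 4.1} then delivers an irreducible companion $\rho'$ of $\rho_{K,s,\lambda,v}$, so in particular the Frobenius classes $[\rho'(\mathrm{Frob}_x)]$ and $[\rho_{K,s,\lambda,v}(\mathrm{Frob}_x)]$ coincide in $[G^{\mathrm{ad}}//G^{\mathrm{ad}}](\overline{\Q})$ for every closed point $x$.

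Next I compare $\rho'$ with $\rho_{K,s,\lambda',v}$ by restricting both to $\Gamma_g\subset\pi_1(\mathbb{S}_{K,s,v})$, where the inclusion is obtained by composing the identification of the characteristic-$0$ geometric fundamental group with $\widehat{\Gamma_g}$ and the tame specialization map from the end of Section~2. The restriction of $\rho_{K,s,\lambda',v}$ is essentially the inclusion $\Gamma_g\hookrightarrow G^{\mathrm{ad}}(\overline{\Q_{\lambda'}})$, Zariski-dense by Borel density. For the restriction of $\rho'$ I use the Kazhdan--Larsen--Varshavsky principle underlying Theorem~\ref{theorem 5.9}: the isomorphism of pro-reductive completions is functorial, so the Zariski closure of the image of $\rho'$ is isomorphic as an algebraic group to that of $\rho_{K,s,\lambda,v}$, namely $G^{\mathrm{ad}}$; a closed connected subgroup of $G^{\mathrm{ad}}$ isomorphic to $G^{\mathrm{ad}}$ must equal $G^{\mathrm{ad}}$ by dimension count, so $\rho'(\pi_1^{\mathrm{geom}}(\mathbb{S}_{K,s,v}))$ is Zariski-dense, and hence so is the image of the dense subgroup $\Gamma_g$. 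The $G^{\mathrm{ad}}$-superrigidity of $\Gamma_g$ then produces $\tau\in\mathrm{Aut}(G^{\mathrm{ad}}_{\overline{\Q_{\lambda'}}})$ with $\rho'|_{\Gamma_g}=\tau\circ\rho_{K,s,\lambda',v}|_{\Gamma_g}$. By continuity and density, the equality extends to the whole geometric fundamental group. For a Frobenius lift $F$ to the arithmetic fundamental group, the element $(\tau\circ\rho_{K,s,\lambda',v}(F))^{-1}\rho'(F)$ centralises the Zariski-dense image of $\pi_1^{\mathrm{geom}}$ in $G^{\mathrm{ad}}$; since $G^{\mathrm{ad}}$ has trivial centre, this element is trivial, and $\rho'=\tau\circ\rho_{K,s,\lambda',v}$ on all of $\pi_1(\mathbb{S}_{K,s,v})$. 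Because $\rho'$ is a companion of $\rho_{K,s,\lambda,v}$, so is $\tau\circ\rho_{K,s,\lambda',v}$, which is the required compatibility.

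The main obstacle is securing Zariski density of $\rho'(\Gamma_g)$, rather than merely $G^{\mathrm{ad}}$-irreducibility of $\rho'$: Theorem~\ref{theorem 4.1} only rules out $\rho'$ landing in a proper parabolic of $G^{\mathrm{ad}}$, and a priori its image could be a proper reductive subgroup. Resolving this via the functorial preservation of monodromy groups under Drinfeld's pro-reductive completion (Theorem~\ref{theorem 5.9}) together with the dimension argument is the delicate step; all remaining work --- Borel density, continuity extensions, and the centraliser argument across the arithmetic/geometric extension --- is routine once this identification is in place, and the correct handling of $\Gamma_g$ as a subgroup of $\pi_1(\mathbb{S}_{K,s,v})$ via the tame specialization maps of Section~2 must also be carried out carefully.
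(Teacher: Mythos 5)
Your overall skeleton is the same as the paper's: produce a $\lambda'$-companion of $\rho_{K,s,\lambda,v}$ with values in $G^{\mathrm{ad}}$, show its \emph{geometric} monodromy is Zariski dense, pull back along the tame specialization map to $\Gamma_g$, apply $G^{\mathrm{ad}}$-superrigidity to get $\tau$, and then use triviality of the centre of $G^{\mathrm{ad}}$ to upgrade the identity from the geometric to the arithmetic fundamental group. The final centraliser argument and the superrigidity step are fine and match the paper. However, the step you yourself flag as delicate is where the proposal has a genuine gap. The isomorphism $\hat{\Pi}_{(\lambda)}\simeq\hat{\Pi}_{(\lambda')}$ of theorem \ref{theorem 3.2} (or its reductive variant, theorem \ref{theorem 5.9}) is an isomorphism of completions of the \emph{arithmetic} fundamental group $\pi_1(\mathbb{S}_{K,s,v})$ of the special fibre; its functoriality therefore only tells you that the Zariski closure of $\rho'(\pi_1(\mathbb{S}_{K,s,v}))$ is $G^{\mathrm{ad}}$. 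Your sentence ``so $\rho'(\pi_1^{\mathrm{geom}}(\mathbb{S}_{K,s,v}))$ is Zariski-dense'' does not follow: nothing in the companion machinery of this paper identifies the monodromy of the restriction to the geometric fundamental group, and the dimension count is applied to the wrong subgroup. The paper closes exactly this gap with a separate argument: by the homotopy exact sequence (lemma \ref{lemma 1.9}) the Zariski closure $H$ of the geometric monodromy is normal in $G^{\mathrm{ad}}$, hence a product of simple adjoint factors; if $H$ were proper, the image of Frobenius, a topologically cyclic (in particular abelian) group, would be Zariski dense in the nontrivial adjoint semisimple quotient $G^{\mathrm{ad}}/H$, which is impossible; hence $H=G^{\mathrm{ad}}$. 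You need this (or an equivalent) argument; as written your proof would fail at this point.

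Two smaller issues. First, to pull $\rho'$ back to $\Gamma_g$ at all you must know that its restriction to $\pi_1(\mathbb{S}_{K,s,\overline{v}})$ factors through the tame quotient, since the specialization map of Section 2 only surjects onto $\pi_1^t$ of the special fibre; the paper secures this by observing (via definition \ref{def 2.6} and Deligne's Th\'eor\`eme 9.8) that the companion is again tame, and your proposal never verifies tameness of $\rho'$. Second, $\Gamma_g$ is not a subgroup of $\pi_1(\mathbb{S}_{K,s,v})$; what you actually use is the homomorphism $\Gamma_g\to\pi_1([S_{K,s}]_{\overline{\Q}})\xrightarrow{sp}\pi_1^t(\mathbb{S}_{K,s,\overline{v}})$ with dense image, which is how the paper defines the ``topological'' pullbacks $\rho^{\mathrm{top}}$; this is harmless but should be stated correctly, since superrigidity is applied to representations of $\Gamma_g$ obtained by composition, not by restriction. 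Your use of theorem \ref{theorem 4.1} (reductive case, with trivial abelianization) in place of the semisimple statement, and your verification of $G^{\mathrm{ad}}$-irreducibility via Borel density and specialization, are acceptable variants of what the paper leaves implicit.
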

\begin{proof}
    Denote by $\kappa(v)$ the residue field of $v$, and $\mathbb{S}_{K,s,\overline{v}}$ the fiber product $\mathbb{S}_{K,s,v}\times_{\kappa(v)}\overline{\kappa(v)}$.
    By Theorem \ref{theorem 4.2}, $\rho_{K,s,\lambda,v}$ has a companion $\rho_{K,s,\lambda'\rightsquigarrow\lambda,v}$ whose image is Zariski dense in $G_{\overline{\Q}_{\lambda'}}^{\mathrm{ad}}$. By Definition \ref{definition 6.16} and \cite[Théorème 9.8]{Deligne_l_function}, $\rho_{K,s,\lambda'\rightsquigarrow\lambda,v}$ is also tame. Denote by $\rho_{K,s,\lambda'\rightsquigarrow\lambda,\overline{v}}$ the restriction of $\rho_{K,s,\lambda'\rightsquigarrow\lambda,v}$ to $\pi_1(\mathbb{S}_{K,s,\overline{v}})$. Suppose that its monodromy group is $H$, a subgroup of $G_{\overline{\Q}_{\lambda'}}^{\mathrm{ad}}$. Since $G_{\overline{\Q}_{\lambda'}}^{\mathrm{ad}}$ is adjoint, it is a direct product of adjoint simple algebraic groups. Since $H$ is normal subgroup, it is also a product of some of those adjoint simple factors. If $H\ne G_{\overline{\Q}_{\lambda'}}^{\mathrm{ad}}$, then by the homotopy exact sequence for algebraic stacks, the Frobenius of the base field $\kappa(v)$ is Zariski-dense in $G_{\overline{\Q}_{\lambda'}}^{\mathrm{ad}}/H$. Since $G_{\overline{\Q}_{\lambda'}}^{\mathrm{ad}}/H$ is semisimple and adjoint, all abelian subgroups are trivial. Therefore, $H=G_{\overline{\Q}_{\lambda'}}^{\mathrm{ad}}$ and $\rho_{K,s,\lambda'\rightsquigarrow\lambda,\overline{v}}$ has Zariski dense image in $G_{\overline{\Q}_{\lambda'}}^{\mathrm{ad}}$. Let $\overline{s}_v$ be the specialization of $s_{\mathcal{O}_{E_{K,s}}}$ in $[S_{K,s,\overline{v}}]$. Pulling back $\rho_{K,s,\lambda'\rightsquigarrow\lambda,\overline{v}}$ to $\pi_1^{top}([S_{K,s}](\C),s)$, there is a representation:
    \begin{align*}
\rho_{K,s,\lambda'\rightsquigarrow\lambda,\overline{v}}^{\mathrm{top}}:\pi_1^{top}([S_{K,s}](\C),s)\to \pi_1([S_{K,s}]_{\overline{\Q}},\overline{s})\xrightarrow{sp}\pi_1^t([S_{K,s,\overline{v}}],\overline{s}_v)\xrightarrow{\rho_{K,s,\lambda'\rightsquigarrow\lambda,\overline{v}}} G^{\mathrm{ad}}(\overline{\Q}_{\lambda'}).
    \end{align*} with Zariski dense image in $G_{\overline{\Q}_{\lambda'}}^{\mathrm{ad}}$ since $\pi_1([S_{K,s}]_{\overline{\Q}},\overline{s})\cong \pi_1([S_{K,s}]_{\C},\overline{s})$ is the profinite completion of $\pi_1^{top}([S_{K,s}](\C),s)$. Denote by $\rho_{K,s,\lambda',\overline{v}}^{\mathrm{top}}$ a similar pullback of $\rho_{K,s,\lambda',\overline{v}}$ via the same map. By the superrigidity of $\Gamma_g=\pi_1^{top}([S_{K,s}](\C),s)$, there is a $\tau\in \mathrm{Aut}(G_{\overline{\Q}_{\lambda'}}^{\mathrm{ad}})$, such that $\tau(\rho_{K,s,\lambda',\overline{v}}^{\mathrm{top}})=\rho_{K,s,\lambda'\rightsquigarrow\lambda,\overline{v}}^{\mathrm{top}}$. As $G_{\overline{\Q}_{\lambda'}}^{\mathrm{ad}}$ has trivial center, and since both are descent of $\rho_{K,s,\lambda'\rightsquigarrow\lambda,\overline{v}}=\tau(\rho_{K,s,\lambda',\overline{v}})$, $\tau(\rho_{K,s,\lambda',v})=\rho_{K,s,\lambda'\rightsquigarrow\lambda,v} $, and thus $\tau(\rho_{K,s,\lambda',v})$ is compatible with $\rho_{K,s,\lambda,v}$.
\end{proof}
In \cite[\textsection 3.3]{klevdal2023compatibility}, it is shown that there exists an $N''$ such that for any two places $\lambda,\lambda'$, for any  prime $v$ in $\mathcal{O}_{E_{K_0,s}}[1/N''\ell\ell']$, there is a point $y\in \mathcal{S}_{K_0,s}(\mathcal{O}_{C_y}[1/N_y])$, where $C_y/F$ is a finite extension, and $N_y$ is some multiple of $N''$ such that $N_y$ is not divisible by $v$, and the following two conditions hold:
\begin{enumerate}
    \item The induced Galois representations $\rho_{\lambda,y}: \pi_1(C_y) \to  G^{\mathrm{ad}}(\overline{\Q}_{\lambda})$ and $\rho_{\lambda',y}: \pi_1(C_y) \to  G^{\mathrm{ad}}(\overline{\Q}_{\lambda'})$ are compatible at any place $v_y$ of $C_y$ above $v$. i.e., with $F_{v_y}$ being the Frobenius above the prime $v_y$, $\rho_{\lambda,y}(F_{v_y})$ and $\rho_{\lambda',y}(F_{v_y})$ have semisimple parts $t_{\lambda,v_y}$ and $t_{\lambda',v_y}$ whose conjugacy classes are defined over $\overline{\Q}$ and in $G^{\mathrm{ad}}(\overline{\Q})$ define the same conjugacy class. 
    \item The element $t_{\lambda,v_y} \in G^{\mathrm{ad}}(\overline{\Q}_{\lambda})$ is not fixed by any automorphism of $G^{\mathrm{ad}}_{\overline{\Q}_{\lambda}}$ with non-trivial image in the outer automorphism group $\mathrm{Out}(G^{\mathrm{ad}}_{\overline{\Q}_{\lambda}})$. 
\end{enumerate}
Viewing $y$ as a point of $\mathbb{S}_{K,s}\bigl(\mathcal{O}_{C_y}[1/N_y]\bigr)$, possibly with a different choice of $N_y$, and noting that $E_{K_0,s}/E_{K,s}$ is a finite extension, we see that the same conditions also hold in the case of Shimura stacks.
\begin{proposition}
For any two $\lambda,\lambda'$, and any $v$ in $\mathcal{O}_F[1/N''\ell\ell']$ such that the above two conclusions hold, $\rho_{K,s,\lambda,v}$ is compatible with $\rho_{K,s,\lambda',v}$.
\end{proposition}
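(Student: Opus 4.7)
The plan is to combine the previous proposition, which already gives compatibility up to a single automorphism $\tau$ of $G^{\mathrm{ad}}_{\overline{\Q_{\lambda'}}}$, with the two special properties of the distinguished point $y$ in order to show that $\tau$ must be inner. Once $\tau$ is inner it acts trivially on the GIT quotient $[G^{\mathrm{ad}} // G^{\mathrm{ad}}]$, and the compatibility of $\tau(\rho_{K,s,\lambda',v})$ with $\rho_{K,s,\lambda,v}$ that the previous proposition provides immediately upgrades to the compatibility of $\rho_{K,s,\lambda',v}$ itself with $\rho_{K,s,\lambda,v}$.

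Concretely, I would first invoke the previous proposition to pick $\tau \in \mathrm{Aut}(G^{\mathrm{ad}}_{\overline{\Q_{\lambda'}}})$ with $\tau(\rho_{K,s,\lambda',v})$ compatible with $\rho_{K,s,\lambda,v}$ at every closed point of $\mathbb{S}_{K,s,v}$. I would then pull both representations back along $\pi_1(C_y) \to \pi_1(\mathbb{S}_{K,s,v})$ coming from the $\mathcal{O}_{C_y}[1/N_y]$-point $y$, and evaluate at a Frobenius $F_{v_y}$ above $v_y$; this shows that the semisimple part $\tau(t_{\lambda',v_y})$ is $G^{\mathrm{ad}}(\overline{\Q_{\lambda'}})$-conjugate to $t_{\lambda,v_y}$. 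By hypothesis (1) from \cite[\S 4.3]{klevdal2023compatibility}, however, $t_{\lambda,v_y}$ and $t_{\lambda',v_y}$ already define the same $G^{\mathrm{ad}}(\overline{\Q})$-conjugacy class, so $\tau$ stabilizes this common class in $[G^{\mathrm{ad}} // G^{\mathrm{ad}}](\overline{\Q_{\lambda'}})$. Hypothesis (2), combined with the canonical identification $\mathrm{Out}(G^{\mathrm{ad}}_{\overline{\Q_{\lambda}}}) = \mathrm{Out}(G^{\mathrm{ad}}_{\overline{\Q_{\lambda'}}})$ coming from the fact that the outer automorphism group of a semisimple group over an algebraically closed field depends only on the root datum, then forces $\tau$ to have trivial image in $\mathrm{Out}(G^{\mathrm{ad}}_{\overline{\Q_{\lambda'}}})$, i.e., $\tau$ is inner, and the argument closes.

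The main obstacle will be the careful bookkeeping in the specialization and descent from $\mathcal{S}_{K_0,s}$ to the stack $\mathbb{S}_{K,s} = [\mathcal{S}_{K_0,s}/H]$. In particular, one needs to check that $F_{v_y} \in \pi_1(C_y)$ really pushes forward to a Frobenius element at a closed point of $\mathbb{S}_{K,s,v}$, so that the global compatibility statement furnished by the previous proposition can be evaluated on it, and one needs to verify that the extra stabilizer group $H$ appearing in the quotient presentation does not obstruct any of this. Because of the choice $v \in \mathrm{Spec}\,\mathcal{O}_F[1/N'll']$ with $N' = N \cdot |H|$, the cover $\mathcal{S}_{K_0,s,v} \to \mathbb{S}_{K,s,v}$ is a tame $H$-Galois cover, so the tame specialization formalism for quotient stacks developed earlier applies and this reduces the tracking to the scheme case already handled in \cite{klevdal2023compatibility}; the rest of the proof is routine.
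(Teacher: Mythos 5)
Your proposal is correct and is essentially the paper's own argument: the paper's proof simply defers to \cite[Proposition 2.8]{klevdal2023compatibility}, whose content is precisely your reduction, namely using the point $y$ and its two properties to force the automorphism $\tau$ supplied by the preceding proposition to be inner, so that it acts trivially on $[G^{\mathrm{ad}}//G^{\mathrm{ad}}]$ and the compatibility of $\tau(\rho_{K,s,\lambda',v})$ with $\rho_{K,s,\lambda,v}$ descends to $\rho_{K,s,\lambda',v}$ itself. The specialization bookkeeping you flag (the Frobenius $F_{v_y}$ mapping to the Frobenius of the closed point of $\mathbb{S}_{K,s,v}$ under $y$) is exactly the routine functoriality the cited argument uses, so no new idea is needed.
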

\begin{proof}
    Since the proof \cite[Proposition 2.8]{klevdal2023compatibility} is algebraic-group-theoretical, the same proof carries over.
\end{proof}

\begin{theorem}
\label{theorem 6.25}
Let $(G,X)$ be a Shimura datum such that $G^{\mathrm{ad}}$ is $\Q$-simple, $Z_G(\Q)$ is a discrete subgroup of $Z_G(\A_f)$ and $\mathrm{rk}_{\R}(G_\R) \geq 2$. Let $K \subset G(\A_f)$ be a compact open subgroup (not necessarily neat), and let $s \in \Sh(\C)$, $[S_{K,s}]$ be as above. Then there is an integer $N$ and an integral model $\mathbb{S}_{K,s}$ over $\mathcal{O}_{E_{K_0,s}}[1/N]$ such that the canonical local system valued in the adjoint group of $G$: $\pi_1(\mathbb{S}_{K,s}, s) \to G^{\mathrm{ad}}(\overline{\Q_\ell})$ is compatible in the following sense: for all closed points $x \in \mathbb{S}_{K,s}[1/\ell]$, the class of $\rho_\ell(\mathrm{Frob}_x)$ in $[G^{\mathrm{ad}}/G^{\mathrm{ad}}](\overline{\Q}_\ell)$ lies in $[G^{\mathrm{ad}}/G^{\mathrm{ad}}](\overline{\Q})$ and is independent of $\ell$ (not equal to the residue characteristic of $x$).
\end{theorem}

\subsection{The reductive case}
Here, we treat the more general case where the local system is valued in a reductive group, with the main theorem being Theorem \ref{theorem 6.29}. The proof follows the spirit of \cite{patrikis2025compatibilitycanonicalelladiclocal}, where an important element is the special points lifting results from \cite{bakker2025integralcanonicalmodelsexceptional}.
\subsubsection{Extension of abelian compatibility to the stack case}
\begin{lemma}
Let $(G,X)$ be a Shimura datum satisfying SV5 as before and $K$ a compact open subgroup in $G(\mathbb{A}_f)$ (not necessarily neat). Let $s = [x,a] \in \mathrm{Sh}(\mathbb{C})$ be a special point on a Shimura stack $[S_{K,s}]$ over a number field $E(s_{K})$. Then the canonical local system $\rho_{K,s}$ specialized to $s_{K}$ defines a compatible system of $G(\mathbb{Q}_\ell)$-representations in the following sense: there exists an integer $N$, such that for any choice of Frobenius $\mathrm{Frob}_v$ at a place $v|p$ $(p\nmid N)$ of $E(s_{K_0})$, and for all $\ell \neq p$, $\rho_{\ell,s}(\mathrm{Frob}_v) \in G(\mathbb{Q}_\ell)$ is $G(\mathbb{Q}_\ell)$-conjugate to a value independent of $\ell$ in $G(\mathbb{Q}_\ell)$.
    
\end{lemma}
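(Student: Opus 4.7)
The plan is to reduce the statement to the well-known compatibility of the canonical Galois representation at a special point of a Shimura variety at neat level, and then descend to the stack. The scheme-level statement is a consequence of Deligne's reciprocity law: at a special point the representation factors through the Artin reciprocity map of a torus, and so is automatically compatible in the strong sense.

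First I would lift $s$ to a special point $s_{K_0}$ on the Shimura variety $S_{K_0,s}$ for some neat normal subgroup $K_0 \subset K$ of finite index. By lemma \ref{lemma 1.1} the pullback of $\rho_{K,s}$ along the \'etale cover $S_{K_0,s} \to [S_{K,s}]$ coincides with the canonical local system $\rho_{K_0,s}$ on $S_{K_0,s}$; in particular specializing and then pulling back agrees with pulling back and then specializing.

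Next I would invoke the classical description at $s_{K_0}$. Attached to $x$ is its Mumford--Tate torus $T \subset G$, and $(T,\{h_x\})$ is a Shimura subdatum of $(G,X)$ whose reflex field is contained in $E(s_{K_0})$. Deligne's theory of canonical models identifies
\[
\rho_{\ell, s_{K_0}} : \mathrm{Gal}(\overline{\Q}/E(s_{K_0})) \longrightarrow T(\Q_\ell) \hookrightarrow G(\Q_\ell)
\]
as the composition of the Artin reciprocity map of $T$ with the cocharacter $r(T,\mu_{h_x})$ coming from the reflex norm, followed by the embedding $T \hookrightarrow G$. Because this is defined by an algebraic morphism of tori, for every prime $v$ of $E(s_{K_0})$ away from a fixed finite set of bad primes the image of $\mathrm{Frob}_v$ lies in $T(\Q) \subset G(\Q)$ and is visibly independent of $\ell$.

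Finally I would pass back to the stack. The natural map $s_{K_0} \to s_K$ is a finite \'etale cover whose deck transformations form a subgroup $H_s \subset H$, so $\pi_1(s_{K_0},\bar s) \to \pi_1(s_K,\bar s)$ is the inclusion of a finite-index subgroup. For $p \nmid N\cdot |H|$, Frobenius at a place $v|p$ of $E(s_{K_0})$ on the variety side maps to a Frobenius element on the stack side, and since $\rho_{K,s}$ pulls back to $\rho_{K_0,s}$ along this inclusion, the $G(\Q_\ell)$-conjugacy class of $\rho_{\ell,s}(\mathrm{Frob}_v)$ equals that of $\rho_{\ell,s_{K_0}}(\mathrm{Frob}_v)$, which is already known to be $\ell$-independent.

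The main technical obstacle is keeping control of the $G(\Q_\ell)$-conjugacy class rather than only a $G^{\mathrm{ad}}$-conjugacy class; the torus argument handles this because the image sits inside the connected subgroup $T(\Q) \subset G(\Q)$. A secondary subtlety is identifying the Frobenius element across the stack-variety cover, which uses the naturality of $\rho_{K,s}$ as an inverse limit over neat normal $K_0 \triangleleft K$ of the quotient representations valued in $K/K_0$ together with the embedding $K \hookrightarrow G(\mathbb{A}_f)$ from section \ref{Sec 1}.
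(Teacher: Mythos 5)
Your reduction to the neat level misses the point that this lemma is designed to handle, and it is where the paper's proof genuinely differs from yours. You begin by lifting $s$ to a special point $s_{K_0}$ of the variety $S_{K_0,s}$ and then compute everything by restricting $\rho_{K,s}$ along the finite-index inclusion $\pi_1(s_{K_0},\bar s)\subset \pi_1(s_K,\bar s)$. But a point of the Shimura \emph{stack} defined over $E(s_K)$ need not descend to a rational point of any neat-level model over a controlled field: the fibre of $S_{K_0,s}\to[S_{K,s}]$ over $s_K$ is a nontrivial Galois torsor in general, and the specialized representation at the stacky point is a representation of (an extension of) $\mathrm{Gal}_{E(s_K)}$ that involves this twisted descent datum, not merely the restriction of the variety-level representation to the subgroup over which a lift exists. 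Your argument therefore only controls Frobenius elements lying in that subgroup, and even there it silently identifies $E(s_{K_0})$ with the splitting field of the torsor without justification; for the Frobenii of $\mathrm{Gal}_{E(s_K)}$ that do not come from the lift, your proof says nothing. Since the whole point of stating the lemma at stack level (as stressed in the introduction) is exactly these points with nontrivial automorphisms and twisted descent data, this is a genuine gap rather than a cosmetic one.

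The paper's proof proceeds differently: it invokes Taelman's adelic description of special points on Shimura stacks (theorem 1.6 and the isomorphism of lemma 3.4 of that reference) to write the specialization of $\rho_{K,s}$ at $s_K$ as a fibre product of a finite-image Galois representation (encoding the stacky torsor, the square with $\mathcal{U}/\mathcal{U}_0$) with an inflation of a reciprocity-type canonical local system on a torus Shimura variety (the square with $T(\Q)\backslash T(\A_f)/\mathcal{U}_0$). Compatibility then follows because both factors are visibly independent of $\ell$ after enlarging $N$. Your appeal to Deligne's reciprocity law at the neat level is the same ingredient as the second factor, so that part of your argument is fine; what is missing is the first factor, i.e.\ an argument that accounts for the finite-image twist at the stack point. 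To repair your approach you would either have to prove that the specialized system at $s_K$ is determined, as a $G(\Q_\ell)$-conjugacy-class datum on all relevant Frobenii, by its restriction to the torsor-splitting subgroup together with an explicit finite-order correction, which essentially amounts to reproving the fibre-product description, or cite it directly as the paper does.
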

\begin{proof}
    The injective map in \cite[theorem 1.6]{taelman2018complex} maps special points $s$ in Shimura stacks to canonical local systems specialized at $x$, which follows from the definition of the canonical local system on Shimura stack and the construction of the isomorphism in \cite[lemma 3.4]{taelman2018complex}.

    \begin{center}
    \begin{tikzcd}
    \mathrm{Gal}_F\arrow[r,]\arrow[d,"\overline{\rho}"]& \mathrm{Gal}_E \arrow[d,"rec"]\\
    \mathcal{U}/\mathcal{U}_0\arrow[r,] & T(\Q)\backslash T(\A_f)/\mathcal{U}_0
    \end{tikzcd}
    \end{center}
    
    \begin{center}
    \begin{tikzcd}
    \mathcal{U}\arrow[r,]\arrow[d,]& T(\Q)\backslash T(\A_f) \arrow[d,]\\
    \mathcal{U}/\mathcal{U}_0\arrow[r,] & T(\Q)\backslash T(\A_f)/\mathcal{U}_0
    \end{tikzcd}
    \end{center}
    Since the first diagram is commutative and the second diagram is Cartesian, they imply that the specialized canonical local system comes from a fiber product of a Galois representation of finite image and an inflation of some specialized canonical local system on a Shimura variety. Thus, by possibly enlarging $N$, the set of bad primes, the canonical local system $\rho_{K_0,s}$ specialized to $s_{K_0}$ defines a compatible system of $G(\mathbb{Q}_\ell)$-representations.
\end{proof}
\subsubsection{G-equivariant version of lifting special points}
Since the slopes of Frobenius on the $F$-crystal do not change when one extends the base field, being $\mu$-ordinary is a property stable under finite base field extension, and we are able to make the following definitions:
\begin{definition}
A point $x$ of a Shimura stack $\mathfrak{S}$ is called $\mu$-ordinary if,
for some étale presentation $S \to \mathfrak{S}$ by a
Shimura variety, every point of $S$ lying above $x$ is $\mu$-ordinary.
\end{definition}

By \cite[theorem 1.5]{bakker2025integralcanonicalmodelsexceptional}, we obtain the following:
\begin{corollary}
    The $\mu$-ordinary locus in the integral canonical model of a Shimura stack is open and dense. All finite field valued $\mu$-ordinary points have a canonical lift as a special point, which is valued in the Witt ring of that finite field.
\end{corollary}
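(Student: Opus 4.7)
The plan is to reduce both assertions to the Shimura variety level via the finite étale presentation defining $\mathfrak{S}$, and then invoke \cite[Theorem 1.5]{bakker2025integralcanonicalmodelsexceptional} on the variety side. With the notation of Section 5.1, $\mathfrak{S}=[\mathcal{S}_{K_0,s}/H]$ where $H=K/K_0$ and $\mathcal{S}_{K_0,s}$ is the connected component of the integral canonical model of the neat-level Shimura variety; the presentation $\pi:\mathcal{S}_{K_0,s}\to\mathfrak{S}$ is finite étale and surjective.

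For openness and density of the $\mu$-ordinary locus, I would observe that by the very definition given just above, $\mu$-ordinarity of a point of $\mathfrak{S}$ is tested on any (equivalently every) preimage under $\pi$, so that $\pi^{-1}(\mathfrak{S}^{\mu\text{-ord}})=\mathcal{S}_{K_0,s}^{\mu\text{-ord}}$. On the variety side, \cite[Theorem 1.5]{bakker2025integralcanonicalmodelsexceptional} provides the smooth integral canonical model, and the $\mu$-ordinary locus is open and dense in each special fiber (Wedhorn--Moonen in the classical cases, and in the generality needed here using the canonical models just cited). Since being open and being dense are both detected and preserved by the surjective finite étale map $\pi$, they descend to $\mathfrak{S}$.

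For the canonical lift, I would start with a $\mu$-ordinary point $\bar x\in\mathfrak{S}(\mathbb{F}_q)$. After a finite extension $\mathbb{F}_{q'}/\mathbb{F}_q$ of degree at most $|H|$, $\bar x$ lifts to a point $\bar y\in\mathcal{S}_{K_0,s}(\mathbb{F}_{q'})$, which remains $\mu$-ordinary by definition. By \cite[Theorem 1.5]{bakker2025integralcanonicalmodelsexceptional}, $\bar y$ admits a canonical lift $y\in\mathcal{S}_{K_0,s}(W(\mathbb{F}_{q'}))$ which is a special point. I would then set $x:=\pi(y)\in\mathfrak{S}(W(\mathbb{F}_{q'}))$; this is a $W$-valued lift of $\bar x$, and it is special because $\pi$ changes only the level of the Shimura datum and so carries special points to special points.

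The main obstacle, and the reason the subsection is titled \emph{$G$-equivariant version of lifting special points}, is to show that this construction is independent of the auxiliary choice of preimage $\bar y$, so that $x$ is genuinely canonical and in addition descends through $\mathbb{F}_{q'}/\mathbb{F}_q$ back to a $W(\mathbb{F}_q)$-valued point. Any two preimages of $\bar x$ differ by the action of some $h\in H$, so what is required is the equivariance statement that the canonical lift of $h\cdot\bar y$ is $h\cdot y$. I expect this to follow from the characterization of the canonical lift by its crystalline/$F$-isocrystal data and Mumford--Tate group, both of which are functorial under the Hecke-type action of $H$ on $\mathcal{S}_{K_0,s}$ used to form the stack quotient; making this equivariance precise is the one place where more than a formal transport of the variety-level theorem is required. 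Once the $H$-equivariance is in hand, $\pi(y)$ is unambiguous, and Galois descent along $\mathbb{F}_{q'}/\mathbb{F}_q$ (using the same equivariance for the action of $\mathrm{Gal}(\mathbb{F}_{q'}/\mathbb{F}_q)$ on the fibre of $\pi$ over $\bar x$) produces the desired canonical special $W(\mathbb{F}_q)$-valued lift.
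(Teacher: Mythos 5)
Your reduction is exactly the route the paper takes: in the source, the corollary carries no proof at all beyond the phrase ``By \cite[theorem 1.5]{bakker2025integralcanonicalmodelsexceptional}, we obtain the following,'' together with the immediately preceding definition, which declares $\mu$-ordinarity of a stack point to be tested on its preimages under an \'etale presentation. So your first paragraph (openness and density descend along the finite \'etale surjection $\pi:\mathcal{S}_{K_0,s}\to\mathfrak{S}$ because $\pi^{-1}(\mathfrak{S}^{\mu\text{-ord}})=\mathcal{S}_{K_0,s}^{\mu\text{-ord}}$) is precisely the intended argument, just written out.

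Where you go beyond the paper is the lifting statement, and the issue you flag is real: the paper's own introduction stresses that $\mathbb{F}_q$-points of the stack need not come from rational points of any neat cover, so a $\mu$-ordinary $\bar x\in\mathfrak{S}(\mathbb{F}_q)$ may only acquire a preimage $\bar y$ over an extension $\mathbb{F}_{q'}$, and one must then argue that the special-point lift descends. The paper is silent on this. Your sketch is the right repair, and it can be closed as follows: the $H=K/K_0$-action and the $\mathrm{Gal}(\mathbb{F}_{q'}/\mathbb{F}_q)$-action both act on $\mathcal{S}_{K_0,s}$ by automorphisms of the integral canonical model that preserve the $\mu$-ordinary locus and carry special points to special points, so the \emph{uniqueness} of the canonical lift in \cite[theorem 1.5]{bakker2025integralcanonicalmodelsexceptional} (rather than any explicit crystalline computation) already forces $h\cdot y$ to be the canonical lift of $h\cdot\bar y$, giving the equivariance you ask for. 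One further point worth making explicit: the resulting descent datum on $\pi(y)$ may be twisted by a nontrivial automorphism of the stack point, so what you obtain is a $W(\mathbb{F}_q)$-valued point of the \emph{stack} $\mathfrak{S}$ (which is all the statement claims), not necessarily a $W(\mathbb{F}_q)$-point of the cover $\mathcal{S}_{K_0,s}$; with that reading, your argument is complete and is strictly more careful than what the paper records.
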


\begin{theorem}
\label{theorem 6.29}
    Let $(G,X)$ be a Shimura datum such that $Z_G(\mathbb{Q})$ is a discrete subgroup of $Z_G(\mathbb{A}_f)$, let $K \subset G(\mathbb{A}_f)$ be a compact open subgroup (not necessarily neat), and let $s \in \mathrm{Sh}(\mathbb{C})$, $[S_{K,s}]$ be as above. Assume that for all $\mathbb{Q}$-simple factors $H$ of $G^{\mathrm{ad}}$, $\operatorname{rk}_{\mathbb{R}}(H_{\mathbb{R}}) \geq 2$. Then there is an integer $N$ and an integral model $\mathbb{S}_{K,s} \text{over } \mathcal{O}_{E_{K,s}}[1/N]$ such that for all closed points $x \in \mathbb{S}_{K,s}[1/\ell]$, the class of $\rho_\ell(\mathrm{Frob}_x)$ in $[G/G](\mathbb{Q}_\ell)$ lies in $[G/G](\mathbb{Q})$ and is independent of $\ell$ (not equal to the residue characteristic of $x$).
\end{theorem}
\begin{proof}
We'll use an open and dense substack in place of the "Dirichlet density 1 subscheme" as in \cite[Theorem 4.4]{patrikis2025compatibilitycanonicalelladiclocal}. By previous lemma, we know that for a geometrically integral algebraic stack of finite presentation over an arbitrary finite field $k$, the \'etale fundamental group of the whole stack is a quotient of the \'etale fundamental group of an open substack. So any two representations of the \'etale fundamental group of the whole stack, which agree on the \'etale fundamental group of an open substack, will actually be the same.

Apart from that, the other ingredients in the proof of \cite[Theorem 4.4]{patrikis2025compatibilitycanonicalelladiclocal} that are relevant to the scheme structure of the Shimura variety is the Čebotarev density theorem, whose extension to the stack case is proved in \cite[Proposition 4.6] {Zheng_2018}. 
\end{proof}

\newpage
\bibliographystyle{plain}
\bibliography{ref}
\end{document}